\documentclass{article}
\usepackage{amsmath}
\numberwithin{equation}{section}
\usepackage{caption}
\usepackage{subcaption}
\usepackage{cite}
\usepackage[english]{babel}
\usepackage[utf8]{inputenc}
\usepackage{amssymb, xcolor, amsthm} 
\usepackage{tikz}
\usepackage{epsfig}
\usepackage{graphicx}
\usepackage{mathtools}
\usepackage{datetime}
\usepackage{hyperref}
\usepackage[utf8]{inputenc}
\usepackage[english]{babel}
\usepackage{amsfonts}
\usepackage{blindtext}
\usepackage[english]{babel}
\newtheorem{theorem}{Theorem}[section]
\newtheorem{corollary}{Corollary}[section]

\newtheorem{proposition}{Proposition}[section]
\newtheorem{remark}{Remark}[section]
\theoremstyle{definition}
\newtheorem{definition}{Definition}[section]
\newtheorem{Example}{Example}[section]


\DeclareMathOperator\Span{span}
\DeclareMathOperator\Lip{Lip}

\newcommand{\wh}[1]{\widehat{#1}}

\newcommand{\n}[1]{{\left\|{#1}\right\|}}
\newcommand{\be}{\begin{equation}}
\newcommand{\ee}{\end{equation}}

\def\N{{\mathbb{N}}}

\def\R{{\mathbb{R}}}

\def\f{{\mathcal{F}}}

\usepackage{amssymb}
\usepackage{amscd}
\usepackage{mathrsfs}  
\usepackage{amsbsy}
\usepackage{amsthm}

\title{Approximation by Quantum Meyer K\"onig and Zeller-Fractal Functions}
\author{}
\date{}

\begin{document}
\begin{center}
{\Large \bf {Approximation by Quantum Meyer-K\"onig-Zeller Fractal Functions}}
\end{center}
\begin{center}
{{ D. Kumar,  A. K. B. Chand, P. R. Massopust }}\\
Department of Mathematics\\
Indian Institute of Technology Madras\\
Chennai - 600036, India, \\
Centre of Mathematics\\
Technical University of Munich (TUM)\\
85748 Garching b. M\"unchen, Germany\\
Email: deependra030794@gmail.com; chand@iitm.ac.in; massopust@ma.tum.de
\end{center}

\begin{abstract}
In this paper, a novel class of quantum fractal functions is introduced based on the Meyer-K\"onig-Zeller operator $M_{q,n}$.  These quantum Meyer-K\"onig-Zeller (MKZ) fractal functions employ $M_{q,n} f$ as the base function in the iterated function system for $\alpha$-fractal functions. For $f\in C(I)$, $I$ closed in $\mathbb{R}$, it is shown that there exists a sequence of  quantum MKZ fractal functions $\{f^{(q_n,\alpha)}_n\}_{n=0}^{\infty}$ which converges uniformly to $f$ without altering the scaling function $\alpha$. The shape of $f^{(q_n,\alpha)}_n$  depends on $q$ as well as the other IFS parameters. For $f,g\in C(I)$ with $g > 0$ or $f\geq g$, we show that there exists a sequence  $\{f^{(q_n,\alpha)}_n\}_{n=0}^{\infty}$ with $f^{(q_n,\alpha)}_n \geq g$ converging to $f$.  Quantum MKZ fractal versions of some classical M\"untz theorems are also presented. For $q=1$, the box dimension and some approximation-theoretic results of MKZ $\alpha$-fractal function are investigated in $C(I)$. Finally, MKZ $\alpha$-fractal functions are studied in $L^p$ spaces with ${p \geq 1}$.
\end{abstract}

\noindent
{\bf Keywords:} Fractal interpolation functions,  quantum Meyer-K\"onig-Zeller operator, smooth quantum fractal functions, constrained approximation,   M\"untz polynomials.
\vskip 2pt\noindent
{\bf AMS Subject Classification (2010):} 28A80. 26A06. 41A05. 41A29. 41A30. 65D05. 65D07.

\section{\bf Introduction}\label{1sec2}
Quantum calculus or $q$-calculus is calculus without the use of limits. This theory has been extensively studied in the fields of approximation theory, special functions, combinatorics, number theory, mechanics, quantum physics, and the theory of relativity.  In 1987, Lupa\c{s} \cite{lupas} constructed the $q$-analogue of Bernstein operators and established convergence estimates and shape preserving properties. In the last three decades,  $q$-extensions of  various results in classical  approximation theory has been proposed by several researchers. For an albeit incomplete list, see, for instance, \cite{Gal_and_Gupta, Gupta1, GH,Ostrovska1,Ostrovska2, Phillips}.

Since classical approximation theory and  $q$-approximation theory dispense with  the approximation  of functions using piecewise smooth functions or infinitely differentiable functions, they are not ideal tools to represent non-differ\-entiable functions such as speech signals,  bio-electric recordings, time series, financial series, or seismic data, to name a few.

Fractal functions  bestow a  constructive approximation theory on irregular functions or functions
whose derivative are non-smooth in nature. Fractal functions easily describe functions
that have some degree of self-similarity at different scales.  Using iterated function systems (IFSs), Barnsley \cite{Barnsley1} introduced the construction of fractal interpolation functions (FIFs) to obtain a mathematical representation of data sets arising from irregular functions. He conceptualized the idea of approximation
of  a continuous function $f$ defined on a real compact interval $I$ by
a family of $\alpha$-fractal functions $f^\alpha$ where $\alpha$ is a set of given or appropriately chosen parameters. We refer the interested reader to the vast literature on fractal functions and fractal interpolation and refer only to 
\cite{PRM,M1,m2,massopust1,m3,m4,Navascues1,Navascues2,Navascues_Chand} as an albeit incomplete list of references as they appertain most closely to the setting considered in this paper.

The choice of a base function $b$ is important in the construction
of $f^\alpha$, even though it is avoided in its notation.
The graph of $f^\alpha$ is typically a fractal set and dimension results
for classes of such fractal functions can be found in, for instance, \cite{Akhtar_Prasad_Navascues,BEHM,bm,Gibert_Massopust,HM1,HM2}.

Shape preserving interpolants play an important role in engineering 
and the applied sciences. The question of shape preserving
aspects of a function $f$ by its fractal perturbation function $f^\alpha$ is answered
affirmatively in \cite{Viswanathan_Chand_Navascues} with a suitable choice for $b$ and $\alpha$.

It is known that an $\alpha$-fractal function $f^\alpha$ of $f$ converges to
 $f$  when  the magnitude of the scaling factors of $f^\alpha$ goes to zero.
 Vijender et. al \cite{VCNS1,VCNS2}  proposed a theory of quantum $\alpha$-fractal functions using Bernstein polynomials associated with $f$ as a base function. They showed that the convergence of a sequence of quantum $\alpha$-fractal functions towards the function $f$ follows from the convergence of the $q$-Bernstein polynomials towards $f$, even if the scaling parameters are non-null. 

In this paper, we propose the use of quantum Meyer-K\"onig-Zeller functions as base functions, i.e., we require that $b=M_{q,n}f$, to construct
a novel sequence of quantum MKZ fractal functions denoted by $f^{\alpha}_{q_n,n}$. It is proved that the sequence $f^{\alpha}_{q_n,n}$  converges to $f$ as $n\to \infty$. However, the magnitude/norm of the scaling functions does not go to 0 when $\{q_n\}_{n=1}^{\infty}$ is a sequence in (0,1] such that $\lim\limits_{n\to \infty} q_n =1$. It is also shown that the shape of $f^{\alpha}_{q,n}$ depends on the scaling functions as well as $0<q\leq 1$. We study the shape preserving aspects of quantum MKZ fractal functions and consider quantum MKZ analogs of two classical M\"untz theorems. The latter approach makes use of so-called quantum MKZ fractal M\"untz polynomials.

Setting $q=1$ in the quantum MKZ-fractal function $f_n^{(q,\alpha)}$, we obtain a novel MKZ $\alpha$-fractal function. Some approximation-theoretic properties and the box dimension for the graph of such MKZ $\alpha$-fractal functions are investigated. Finally, we study the existence of MKZ $\alpha$-fractal functions in $L^p$ spaces with $p\geq 1$ and investigate their approximation-theoretic properties.

\section{Background and Preliminaries} 

In this section, we present the foundations of IFSs and the construction of $\alpha$-fractal functions from a suitable IFS. For more details, the interested reader may want to consult \cite{Barnsley1,Hutchinson,M1,Navascues1}.
     
Let $N\in \N:= \{1,2,3,\ldots\}$ and denote by $\N_N := \{1, \ldots, N\}$ the initial segment of $\N$ of length $N$. An IFS $\mathcal{F} := \{X;w_i : i\in \N_N \}$ is a collection of continuous functions on a complete metric space $(X,d)$. $\mathcal{F}$ is called a hyperbolic IFS if each $w_i$ is contractive on $X$, i.e., its Lipschitz constant
\[
s_i := \Lip (w_i) := \sup_{x,y\in X,\,. x\neq y} \frac{d(w_i(x), w_i(y))}{d(x,y)} < 1.
\]

Let $\mathcal{H}(X) := \{ A\subseteq X : A  \text{ is non-empty and compact} \}$. The Hausdorff-Pompeiu metric $h$ on $\mathcal{H}(X)$ is defined by
\[
h(A,B) := \max\{d(A,B),d(B,A)\}, 
\]
where $d(A,B) :=\sup \{d(x,B):x \in A\}$ and $d(x,B):=\inf \{d(x,y):y\in B\}$. It is known that if $(X,d)$ is a complete metric space then $(\mathcal{H}(X),h)$ is also a complete metric space, termed the space of fractals in \cite{Barnsley}. 

The Huchinson map $W : \mathcal{H}(X)\to \mathcal{H}(X)$ is defined by  
\[
W(A) := \bigcup_{i=1}^{N} w_i(A), \quad\forall A\in \mathcal{H}(X).
\]
(See, \cite{Hutchinson}.) If the IFS $\f$ is hyperbolic then $W$ is a contraction on $(\mathcal{H}(X),h)$ with contraction factor $s :=\max\limits_{i\in \N_N} |s_i| < 1$. Thus, by the Banach fixed point theorem, there exists a unique $G$ in $\mathcal{H}(X)$ such that 
\[
G=\lim_{m\to \infty}W^{\circ m} (A), \quad\text{ for any $A \in \mathcal{H}(X)$}, 
\]
where $W^{\circ m}$ denotes the $m$-fold composition of $W$ with itself. The fixed point $G$ is called the attractor of or deterministic fractal generated by the hyperbolic IFS $\f$.

Now, consider a set of interpolation points 
\[
\{(x_j,y_j)\in [x_1, x_N]\times\R: -\infty<x_1<...<x_N<+\infty \wedge j\in \mathbb{N}_N\}.
\] 
Let $u_i,i\in \mathbb{N}_{N-1}$, be a set of homeomorphisms from $I:=[x_1,x_N]$ to $I_i:=[x_i,x_{i+1}]$ satisfying 
\begin{equation}\label{1eq1}
u_i(x_1)=x_i,\quad u_i(x_N)=x_{i+1}.
\end{equation}
For $i\in \N_{N-1}$, let $v_i: I\times K\to K$ be a function where $K$ is  a suitable compact subset of $\mathbb{R}$ that contains all the $y_j$, $j \in \mathbb{N}_N$. (The existence of such a set is shown in, i.e., \cite{massopust1}.) Assume that each
$v_i$ is continuous in the first variable and Lipschitz continuous in the second variable with Lipschitz constant $|\alpha _i|<1,i\in \mathbb{N}_{N-1}$, i.e.,
\begin{equation}\label{1eq2}
v_i(x_1,y_1)=y_i, \quad v_i(x_N,y_N)=y_{i+1},
\end{equation}and 
\begin{equation}\label{1eq3}
|v_i(x,y_1) - v_i(x,y_2)| \leq |\alpha _i| |y_1 - y_2|, \quad\forall\, i\in \mathbb{N}_{N-1}. 
\end{equation}
Let $C(I)=\{f:I\to \mathbb{R} : f ~ \text{is continuous on $I$}\}$ and define 
\[
\mathcal{G} :=\{g\in C(I):g(x_1)=y_1 \wedge g(x_N)=y_N\}.
\]
Defining a metric on $ \mathcal{G}$ by $\rho(h,g) :=\max \{ | h(x)-g(x) | : x\in I \}$ for $g,h\in \mathcal{G}$, makes $(\mathcal{G},\rho)$ into a complete metric space. 

Define a Read-Bajraktarevic (RB) operator\cite{massopust1} $T$ on $(\mathcal{G},\rho)$ by
\begin{equation}\label{1eq4}
Tg(x) := \sum_{i=1}^{N-1}v_i(u_i^{-1}(x),g\circ u_i^{-1}(x))\chi_{u_i(I)}(x), \quad x\in I,
\end{equation}
where $\chi_S$ denotes the characteristic or indicator function of a set $S$.

Using the properties of $u_i$ and $v_i$, it is straight forward to verify that $Tg$ is continuous on  $I$. Also,
 \begin{equation}\label{1eq5}
   \rho(Tg,Th)\leq |\alpha|_{\infty} \rho(g,h), 
  \end{equation}
  where $\alpha:=(\alpha_1,...,\alpha_{N-1})$ and $|\alpha|_{\infty} := \max\{|\alpha _i :i\in \mathbb{N}_{N-1} \}< 1 $.
Hence, $T$ is a contractive map on the complete metric space $(\mathcal{G},\rho)$. Therefore, by the Banach fixed point theorem, $T$ possesses a unique fixed point $f^*\in\mathcal{G}$. Consequently, from (\ref{1eq4}), $f^*$ obeys the self-referential functional equation
 \begin{equation}\label{1eq6}
  f^* = \sum_{i=1}^{N-1} v_i(u_i^{-1},f^*\circ u_i^{-1})\chi_{u_i(I)}
 \end{equation}
on $I$. It can be easily verified that $f^*(x_j)= y_j$, $j\in \mathbb{N}_{N-1} $. 

Now, define mappings $w_i : I\times K \to I_i \times K$ by 
\[
w_i(x,y):=(u_i(x),v_i(x,y)), \quad (x,y)\in I\times K, \;\;i\in \mathbb{N}_{N-1}. 
\]
The graph of $G(f^*)$ of $f^*$ is the attractor of the IFS $\mathcal{I} :=\{I\times K;w_i(x,y)=(u_i(x),v_i(x,y)), i\in \mathbb{N}_{N-1}\}$ and satisfies the self-referential set equation 
\begin{equation}\label{1eq7}
G(f^*)= \bigcup _{i\in \mathbb{N}_{N-1}}w_i(G(f^*)).
\end{equation}
In this setting,  $f^*$ is called a fractal interpolation function (FIF) associated with the IFS $\mathcal{I}$. 

It was observed in \cite{Barnsley1, M1,Navascues1} that the concept of FIF may be used to define classes of fractal functions associated with any function $f \in C(I)$ as described in the following.

For this purpose, let $I:= [x_1,x_N]\subset\R$. For a given $f\in C(I)$, consider a partition  $\Delta :=\{ x_1, x_2,......x_N\}$ of $I$ satisfying $x_1<x_2<\ldots <x_N$, and a continuous function $b:I \to \mathbb{R}$ with $b\neq f$ that satisfies the endpoint interpolation conditions $b(x_1)=f(x_1)$ and $b(x_N)= f(x_N)$. 

Choose an $\alpha = (\alpha_1, \ldots, \alpha_{N-1}) \in (-1,1)^{N-1}$. If, for $i\in \mathbb{N}_{N-1}$, we set 
\begin{equation}\label{1eq8}
u_i(x):=a_ix+b_i\quad\text{and}\quad v_i(x,y):=\alpha_i y +f(u_i(x))-\alpha _i b(x),
\end{equation} 
and determine the constants $a_i$ and $b_i$ via the conditions \eqref{1eq1},
then the IFS $\{[x_1,x_N]\times \mathbb{R};w_i(x,y)=(u_i(x),v_i(x,y)), i\in\mathbb{N}_{N-1}\}$ determines an attractor which is the graph of a fractal function $f^{\alpha}_{\Delta,b} =: f^{\alpha}$. The function $f^{\alpha}$ is referred to as  $\alpha$-fractal function for $f$ and may be considered as the fractalization of $f$ (with respect to the scaling vector $\alpha $, the base function $b$, and the partition $\Delta$).  

The function $f^{\alpha}$ is fixed point of the RB operator $T:C_f(I)\to C_f(I)$ defined by\cite{M1}
\begin{equation}\label{1eq9}
 Tg = f + \sum_{i=1}^{N-1}\alpha _i\, (g - b)\circ u_i^{-1}\,\chi_{u_i(I)}
\end{equation} 
on $I$, where $C_f(I)=\{ g\in C(I): g(x_1)=f(x_1) \wedge g(x_N)=f(x_N)\}.$ Consequently,  $f^{\alpha}$  satisfies the self-referential equation 
\begin{equation*}\label{1eq10}
 f^{\alpha}(x)= f(x)+ \sum_{i=1}^{N-1}\alpha _i (f^{\alpha}(u_i^{-1}(x))- b(u_i^{-1}(x)))\chi_{u_i(I)}(x),\quad x\in I. 
\end{equation*}
The fractal dimension of $f^{\alpha}$ depends on the choice of the scaling vector  $\alpha$ and the $a_i$ \cite{BEHM,HM1,Akhtar_Prasad_Navascues}.

To obtain more flexibility in the construction of fractal functions, the constant  scalings $\alpha_i$, $i\in \mathbb{N}_{N-1}$, can be replaced by continuous functions $\alpha_i\in C(I)$ with $\|\alpha \|_{\infty} := \max\{\|\alpha _i\|_{\infty}:i\in \mathbb{N}_{N-1}\}<1$ in the IFS (\ref{1eq8}). Hence, 
\begin{equation}\label{1eq11}
v_i(x,y)=\alpha_i(x) y +f(u_i(x))-\alpha _i(x) b(x),\quad i\in \mathbb{N}_{N-1}.
\end{equation}
The corresponding  $\alpha$-fractal function is then the fixed point of the RB-operator
\begin{equation}\label{1eq12}
 Tg = f + \sum_{i=1}^{N-1} (\alpha _i\circ u_i^{-1})( g - b)\circ u_i^{-1}\chi_{u_i(I)}
\end{equation}
and satisfies a self-referential equation with location-dependent scalings
\begin{equation}\label{1eq13}
 f^{\alpha}(x)= f(x)+ \sum_{i=1}^{N-1}\alpha _i(u_i^{-1}(x))( f^{\alpha}(u_i^{-1}(x))- b(u_i^{-1}(x)))\chi_{u_i(I)}(x),\quad x\in I.
\end{equation}

Using (\ref{1eq13}), it is easy to show that
\begin{equation}\label{1eq14}
\|f^{\alpha}-f\|_{\infty}\leq \frac{\|\alpha\|_{\infty}}{1-\|\alpha\|_{\infty}}\, \|f-b\|_{\infty}.
\end{equation}

The above inequality shows that an $\alpha$-fractal function $f^{\alpha}$ converges uniformly to $f$ if either  $\|\alpha\|_{\infty} \to 0$ or $\|f-b\|_{\infty} \to 0$. In particular, if $b$ is taken to be a sequence of MKZ quantum functions, the  novel class of MKZ $(q,\alpha)$-fractal functions is obtained.

\section{MKZ $(q,\alpha)$-Fractal Functions}\label{1sec3}
We need the following notation from quantum calculus. 
For $q\in(0,1]$ and $k\in \mathbb{N}$, let
\[
[k]_q := \begin{cases} \frac{1-q^k}{1-q}, & q\neq 1;\\ k, & q = 1.
\end{cases}
\]
The $q$-factorial is defined as
\[
[k]_q! := \begin{cases} 
      [k]_q[k-1]_q.....[2]_q[1]_q, & k\in \N; \\
      1, & k=0.
   \end{cases}
\]
By means of the $q$-factorial, the $q$-binomial coefficients are then defined by
\[
\binom{n}{k}_q := \frac{[n]_q!}{[k]_q![n-k]_q!},
\]
for all integers $n\geq k\geq 0$.

Following \cite{Trif, Aral_Gupta_Agarwal, Heping}, we define a sequence of MKZ-functions on $I=[x_1,x_N]$ for $ f \in C(I)$  by
\begin{equation}\label{1eq15}
\begin{split}
& M_{n,q}f(x) := P_{n,q}(x) \sum _{k=0}^{\infty} \binom{n+k}{k}_q \left(\frac{x-x_1}{x_N-x_1}\right)^k f\left(x_1+(x_N-x_1)\frac{[k]_q}{[k+n]_q}\right), \\
& M_{n,q}f(x_N) :=f(x_N),
\end{split}
\end{equation}
with 
\[
P_{n,q} (x) := \dfrac{\prod\limits_{j=0}^n (x_N-x_1- q^j(x-x_1))} {(x_N-x_1)^{n+1}}.
\]
It is easy to verify that
\begin{equation*}\label{1eq16}
 M_{n,q}f(x_1)=f(x_1).
\end{equation*}
If, in (\ref{1eq11}), we take as the base function $b :=M_{n,q}f$, then the corresponding $\alpha$-fractal function 
\[
f^{(q,\alpha)}_{n} := \mathcal{F} ^{(q,\alpha)}_{\Delta,b}(f)
\]
is termed a  $(q,\alpha)$-fractal function or quantum MKZ fractal function associated with $f \in C(I)$. 

Moreover,
  \begin{equation}\label{1eq17}
 f^{(q,\alpha)}_n = f+\sum_{i=1}^{N-1} (\alpha _i \circ u_i^{-1})(f^{(q,\alpha)}_n - M_{n,q}f)\circ u_i^{-1}\chi_{u_i(I)}, \quad\text{on $I$.}
 \end{equation}
 
It follows from (\ref{1eq15}) and (\ref{1eq17}), that the various quantitative and approxima\-tion-theoretic  properties of $(q,\alpha)$-fractal functions $ f^{(q,\alpha)}_n$ depend on the choices for $q$ and the scaling functions $\alpha_i$. 
    
    The graph of a $(q,\alpha)$-fractal functions $ f^{(q,\alpha)}_n$  is constructed via the IFS 
     \begin{equation}\label{1eq18}
  \f_{(q,n)} = \{ I\times \mathbb{R};w_{n,i}(x,y)=(u_i(x),v_{n,i}(x,y)): i\in \mathbb{N}_{N-1}\}, \quad n\in \mathbb{N},
  \end{equation}  
  where $v_{n,i}(x,y) := f(u_i(x))-\alpha _i (x)(y- M_{n,q}f(x)).$\\
  
  The following theorem ensures the convergences of a sequence of quantum MKZ-fractal functions to $f$ in the sup-norm.
\begin{theorem}\label{1th3.1}
Let $f \in C(I)$. Then, there exists a sequence of quantum MKZ-fractal functions  $\{f^{(q_n,\alpha)}_n\}_{n=0}^{\infty}$ that converges uniformly to $f$ on $I$, where $\{q_n\}_{n=1}^{\infty}$ is  a sequence in $(0,1]$ with $\lim\limits_{n\to \infty} q_n =1$  and $f^{(q_n,\alpha)}_n$ is the fractal function corresponding to the IFS $\f_{(q_n,n)}$ defined in (\ref{1eq18}). Furthermore,   for each integer  $n \geq 3$, we have 
\[ 
\|f^{(q_n,\alpha)}_n-f\|_{\infty} \leq \tfrac{5}{2}\omega \left(f,\tfrac{1}{\sqrt{[n]_{q_n}}}\right) \frac{\|\alpha\|_{\infty}}{1-\|\alpha\|_{\infty}},
\]
where $\omega$ denotes the modulus of continuity of $f$.
\end{theorem}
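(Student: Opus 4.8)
The plan is to decouple the fractal perturbation from the quantum approximation and handle each separately. The crucial observation is that the base function here is $b=M_{n,q_n}f$, so the general a priori estimate \eqref{1eq14}, which is valid for \emph{any} admissible base function, applies verbatim with $f^{\alpha}=f^{(q_n,\alpha)}_n$ and yields
\be
\n{f^{(q_n,\alpha)}_n - f}_\infty \le \frac{\n{\alpha}_\infty}{1 - \n{\alpha}_\infty}\,\n{f - M_{n,q_n}f}_\infty .
\ee
Thus the whole problem reduces to controlling the MKZ approximation error $\n{f - M_{n,q_n}f}_\infty$ by the modulus of continuity of $f$; once this is in hand, both the explicit inequality and the uniform convergence follow immediately.

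For the second step I would establish the quantitative Korovkin-type estimate $\n{f - M_{n,q}f}_\infty \le \tfrac52\,\omega(f,1/\sqrt{[n]_q})$ for $n\ge 3$ and every $q\in(0,1]$; this is the $q$-MKZ analogue of the classical estimate and is available in the cited works of Trif and Heping. For a self-contained derivation I would first verify that $M_{n,q}$ reproduces constants, using the $q$-binomial identity $\sum_{k\ge 0}\binom{n+k}{k}_q t^k = \prod_{j=0}^{n}(1-q^j t)^{-1}$ together with the explicit form of $P_{n,q}$, which shows $M_{n,q}1=1$. Since $M_{n,q}$ is then a positive linear operator preserving constants, the Shisha--Mond inequality gives
\be
|M_{n,q}f(x) - f(x)| \le \left(1 + \tfrac{1}{\delta^2}\,M_{n,q}\!\big((\cdot - x)^2\big)(x)\right)\omega(f,\delta),
\ee
so it remains to bound the second central moment $M_{n,q}\big((\cdot-x)^2\big)(x)$ by a bound of the form $c/[n]_q$ uniformly in $x\in I$, with the constants arranged so that the choice $\delta=1/\sqrt{[n]_q}$ makes the bracket evaluate to exactly $\tfrac52$; this moment bound is also where the hypothesis $n\ge 3$ enters. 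Inserting this into the displayed fractal estimate produces the stated inequality.

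Finally, for the convergence claim I would argue that $q_n\to 1$ forces $[n]_{q_n}\to\infty$, whence $\omega(f,1/\sqrt{[n]_{q_n}})\to 0$ by the uniform continuity of $f$ on the compact interval $I$, and the right-hand side of the explicit bound tends to $0$. The one point that is not merely formal, and which I expect to be the main subtlety, is the implication $q_n\to 1 \Rightarrow [n]_{q_n}\to\infty$: for a \emph{fixed} $q<1$ one has $[n]_q=(1-q^n)/(1-q)\to 1/(1-q)$, so the growth genuinely requires $q_n\to 1$. Writing $q_n=1-\varepsilon_n$ with $\varepsilon_n\to 0^+$ and passing to subsequences along which $n\varepsilon_n$ converges in $[0,\infty]$, a short case analysis ($n\varepsilon_n\to\infty$, $n\varepsilon_n\to c\in(0,\infty)$, and $n\varepsilon_n\to 0$) shows $[n]_{q_n}\to\infty$ in every case and hence along the full sequence. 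The moment estimate underlying the $\tfrac52$-bound is the only genuinely technical ingredient; the rest is assembly.
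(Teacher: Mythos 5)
Your proposal is correct, and its core decomposition is exactly the paper's: both reduce the theorem to the MKZ approximation error via the a priori bound \eqref{1eq14} with $b=M_{n,q_n}f$, and both then invoke Trif's estimate $\|M_{n,q}f-f\|_\infty\le\tfrac52\,\omega\left(f,1/\sqrt{[n]_q}\right)$ for $n\ge 3$ to get the displayed inequality. The one genuine difference is how uniform convergence is obtained. The paper treats it as a separate qualitative step, citing Theorem 2 of \cite{Heping}, which directly asserts $\|M_{n,q_n}f-f\|_\infty\to 0$ when $q_n\to 1$; so the paper uses two external ingredients (Heping for convergence, Trif for the rate). You instead deduce convergence from the quantitative bound itself by proving $[n]_{q_n}\to\infty$, which makes the argument more self-contained (only Trif is needed) at the cost of the elementary step you correctly flag as the main subtlety. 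Your subsequence/case analysis for that step is valid (including the implicit $q_n=1$ case, where $[n]_{q_n}=n$), but it can be shortened considerably: since all terms of $[n]_q=1+q+\cdots+q^{n-1}$ are nonnegative, for any fixed $m$ and all $n\ge m$ one has $[n]_{q_n}\ge [m]_{q_n}=1+q_n+\cdots+q_n^{m-1}\to m$ as $n\to\infty$, hence $\liminf_{n\to\infty}[n]_{q_n}\ge m$ for every $m$, i.e.\ $[n]_{q_n}\to\infty$. Your optional Shisha--Mond sketch of the $\tfrac52$ constant (constant preservation via the $q$-binomial identity, then a second-moment bound of order $1/[n]_q$ where $n\ge 3$ enters) accurately reflects how Trif's result is proved, but neither you nor the paper needs to reprove it.
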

\begin{proof}
   Let $f^{(q_n,\alpha)},n\in \mathbb{N}$, be the quantum MKZ-fractal function corresponding to $f$. From (\ref{1eq14}), we obtain 
        \begin{equation}\label{1eq19}
       \|f^{(q_n,\alpha)}_n-f\|_{\infty} \leq \frac{\|\alpha\|_{\infty}}{1-\|\alpha\|_{\infty}} \|f-M_{n,q_n}f\|_{\infty}
       \end{equation}
 By \cite[Theorem 2]{Heping} it follows that
 \begin{equation}\label{1eq20}
     \|M_{n,q_n}f-f\|_{\infty} \to 0 \text{~as~ $n\to \infty$},
 \end{equation}
 which implies uniform convergence of $\{f^{(q_n,\alpha)}_n\}_{n=1}^{\infty}$ to $f$ .
 
Applying the result that
\[
\|M_{n,q}f-f\|_{\infty} \leq \tfrac{5}{2}\omega \left(f,\tfrac{1}{\sqrt{[n]_{q}}}\right), \quad n\geq 3,
\]
from \cite[Theorem 2.3]{Trif} to (\ref{1eq19}), we obtain the following estimate:
     \begin{equation}\label{1eq21}
       \|f^{(q_n,\alpha)}_n-f\|_{\infty} \leq \tfrac{5}{2}\omega \left(f,\tfrac{1}{\sqrt{[n]_{q_n}}}\right) \frac{\|\alpha\|_{\infty}}{1-\|\alpha\|_{\infty}},\quad n \geq 3.
\end{equation}
\end{proof}
\begin{Example}
We want to construct a quantum MKZ-fractal function associated with $f(x)= \sin x$ where $x\in[0,1]$. Choose $I :=[0,1]$ and as a partition $\Delta :=\{0, \frac{1}{7}, \frac{2}{7},  \frac{3}{7}, \frac{4}{7}, \frac{5}{7}, \frac{6}{7}, 1\} $. Furthermore, let $\alpha_i(x) := (1+e^{-10x})^{-1}$, $x\in[0,1]$,  $i\in \mathbb{N}_7$. We take  $\{M_{n,q_n}f\}_{n=1}^{\infty}$ as a sequence of base functions.  

The quantum MKZ-fractal functions  are depicted in Figures \ref{1fig1}(a)--(c) and represent the graphs of $f^{(.5,\alpha)}_1$, $f^{(.5,\alpha)}_{50}$, and $f^{(.9,\alpha)}_{50}$, respectively, at the second level of iteration. 
Figure \ref{1fig1}(b) and Figure \ref{1fig1}(c) show  the  effect of $q$ on the quantum MKZ-fractal function.

Figures. \ref{1fig1}(a) and \ref{1fig1}(c) ensure that the fractal function $f^{(.9,\alpha)}_{50}$ provides better approximation of $f(x)=\sin x$, $x\in[0,1]$ than $f^{(.5,\alpha)}_1$. From $f^{(.5,\alpha)}_1$ and $f^{(.9,\alpha)}_{50}$, we observe that these two functions do not have  the same irregularity even when their scaling functions are the same. Note that $f^{(.5,\alpha)}_1$ exhibits irregularities on all scales, whereas $f^{(.9,\alpha)}_{50}$ exhibits irregularities on small scales. Figure \ref{1fig1}(d)  is the blow-up of a small part of $f^{(.9,\alpha)}_{50}$ showing irregularities of $f^{(.9,\alpha)}_{50}$ on small scales.
One reason for these irregularities is that the scaling function $\alpha$ does not satisfy the inequalities $\|\alpha_i\|_{\infty}<\frac{ a_i}{2}, i\in \mathbb{N}_7$\cite[Theorem 3.1]{Viswanathan_Navascues_Chand}.  
\end{Example}
\begin{figure}[ht!]
\centering
\begin{minipage}{0.48\textwidth}
\epsfig{file=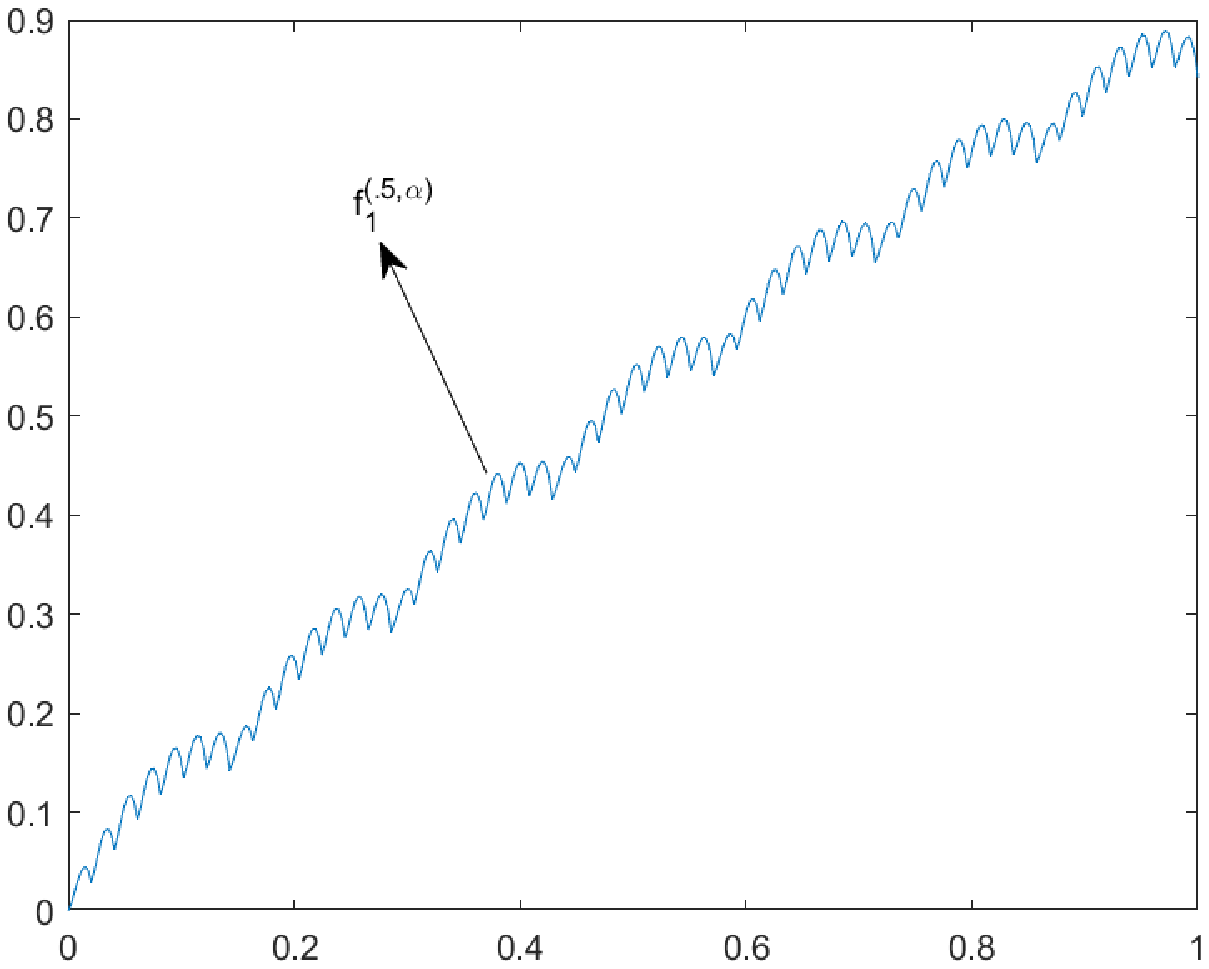,scale=0.4}\\
\centering{(a) $q = 0.5$, $n=1$}
\end{minipage}\hfill
\begin{minipage}{0.48\textwidth}
\epsfig{file=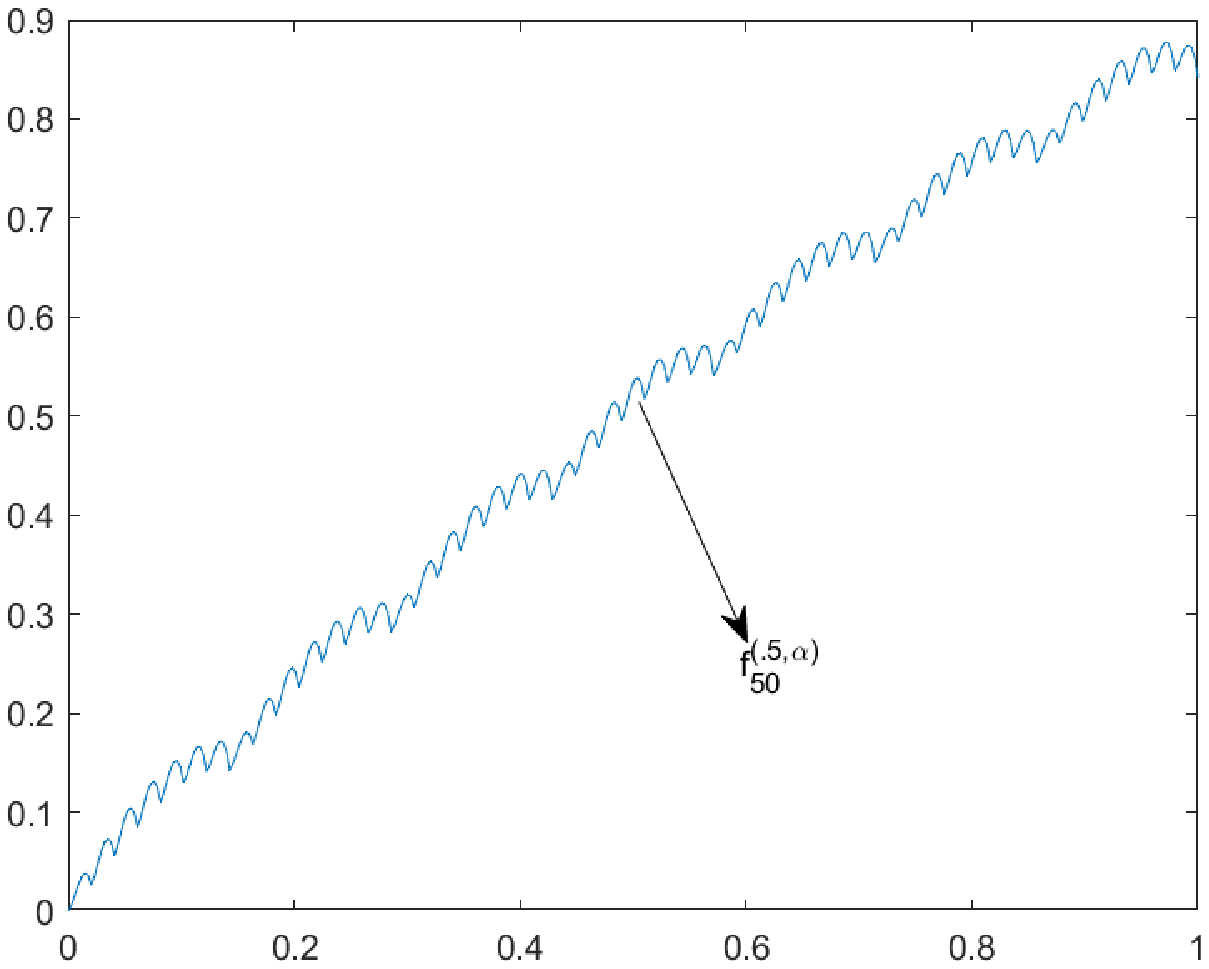,scale=0.4}\\
\centering{(b) $q = 0.5$, $n=50$, Effect of $n$}
\end{minipage}\hfill
\centering
\begin{minipage}{0.48\textwidth} 
\epsfig{file=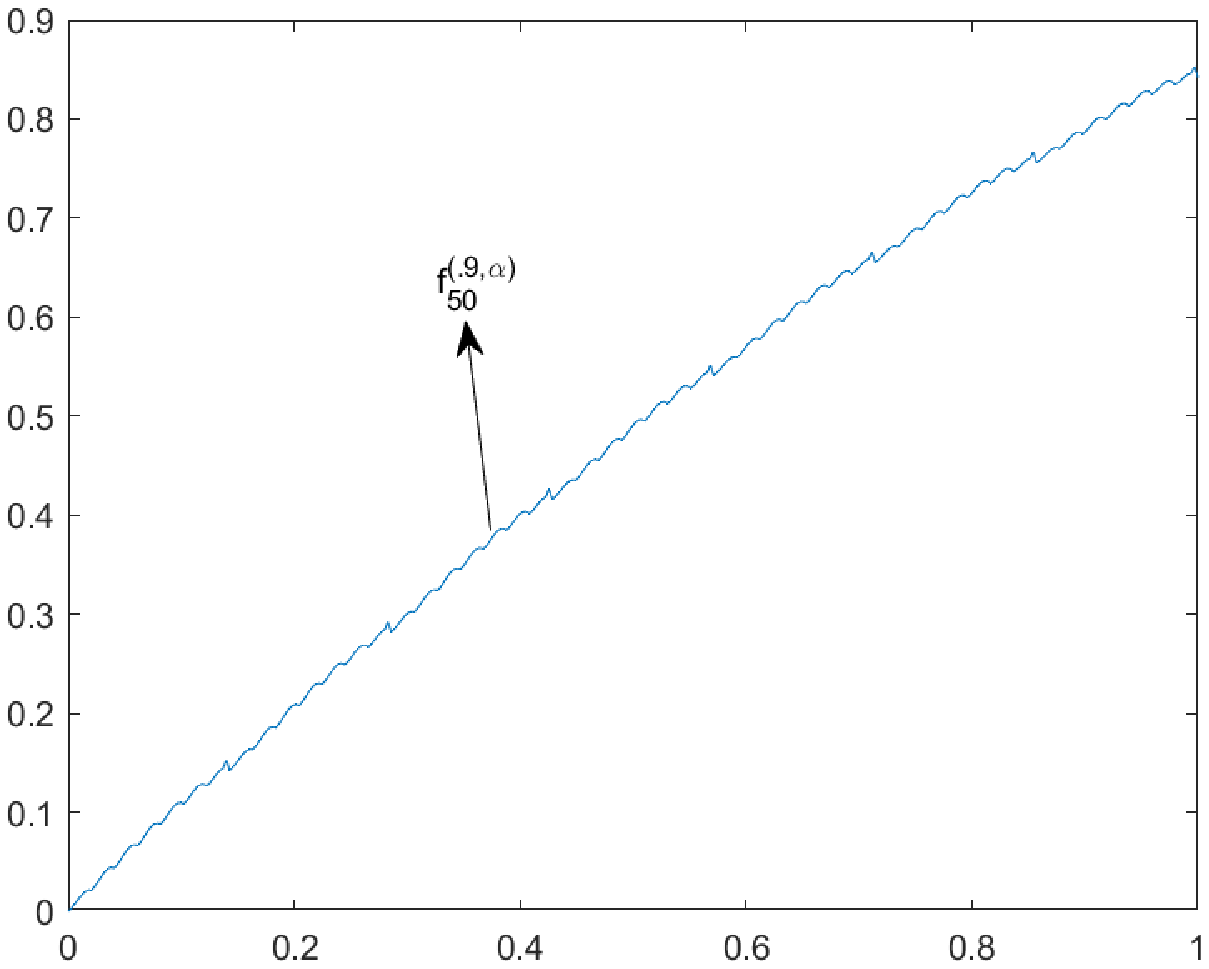,scale=0.4}\\
\centering{(c) $ q=0.9$, $n=50$, Effect of $q$ }
\end{minipage}\hfill
\begin{minipage}{0.48\textwidth}
\epsfig{file=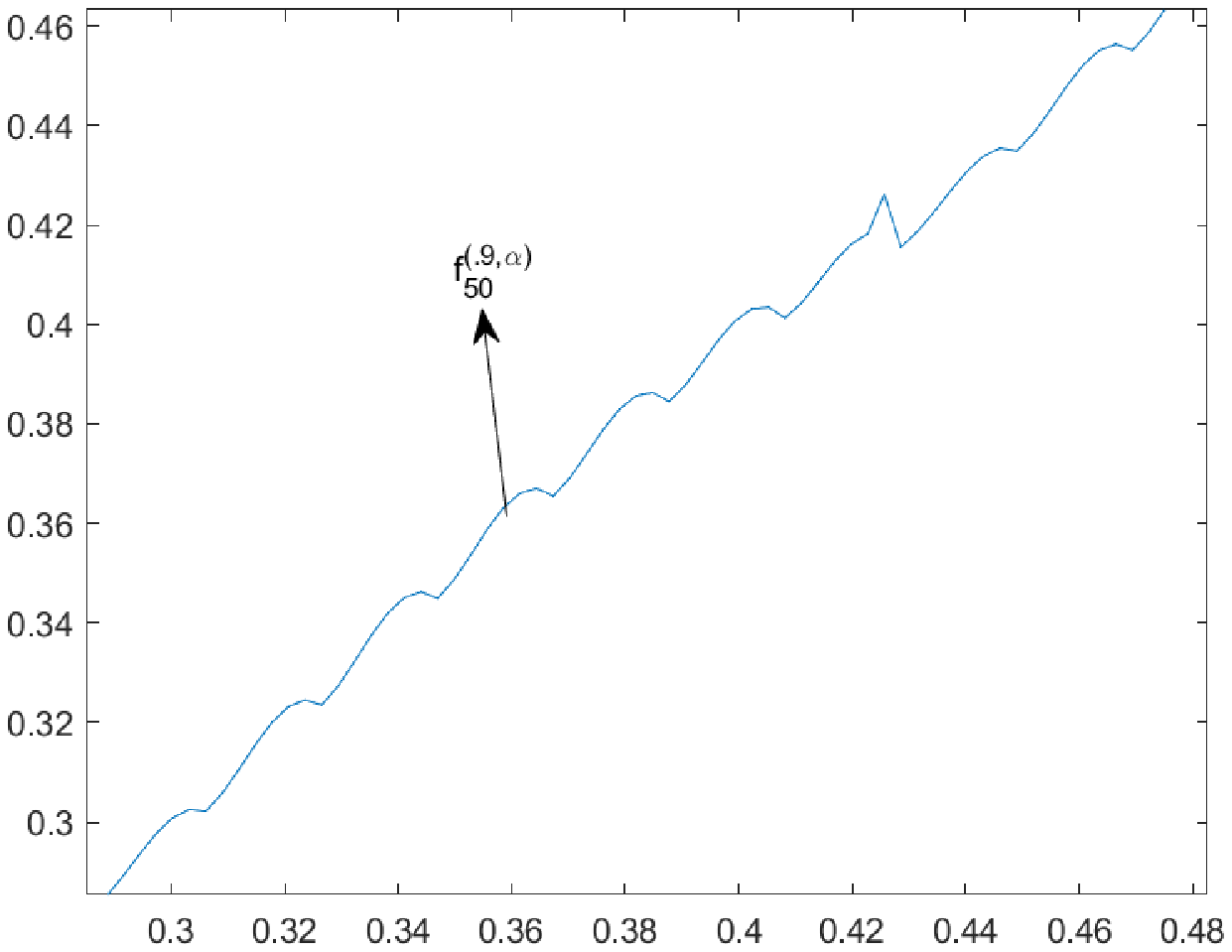,scale=0.4}\\
\centering{(d) Magnified version of part of (c) }
\end{minipage}\hfill
\caption{MKZ-fractal functions of $\sin x$.}
\label{1fig1}
\end{figure}

\begin{theorem}\label{1th3.2}
Let $C(I)$ be endowed with the sup-norm. For every $n\in \mathbb{N}$, the
$(q,\alpha)$-operator $\mathcal{F}_n^{(q,\alpha)}:C(I) \to C(I)$, $\mathcal{F}_n^{(q,\alpha)}(f)=f^{(q,\alpha)}_n$, is bounded and linear.
\end{theorem}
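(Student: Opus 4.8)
The plan is to establish linearity through the uniqueness of the fixed point of the Read--Bajraktarević operator, and boundedness through the a priori estimate \eqref{1eq14} combined with the boundedness of the MKZ operator $M_{n,q}$. The single most convenient device throughout is to argue directly from the self-referential identity \eqref{1eq17}, which holds on all of $I$, rather than from the fixed-point space $C_f(I)$ whose endpoint constraints depend on the data function $f$.

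For linearity, fix $\lambda,\mu\in\R$ and $f,g\in C(I)$, and set $h:=\lambda f^{(q,\alpha)}_n+\mu g^{(q,\alpha)}_n$. First I would record that, by \eqref{1eq15}, the operator $M_{n,q}$ is linear in its argument, so $M_{n,q}(\lambda f+\mu g)=\lambda M_{n,q}f+\mu M_{n,q}g$. I would then substitute $h$ into the right-hand side of \eqref{1eq17} written for the data function $\lambda f+\mu g$, split the finite sum over $i$ by linearity, and regroup the $\lambda$- and $\mu$-contributions. Invoking the fixed-point equations \eqref{1eq17} satisfied separately by $f^{(q,\alpha)}_n$ and $g^{(q,\alpha)}_n$, each regrouped bracket collapses to $f^{(q,\alpha)}_n$ and to $g^{(q,\alpha)}_n$, so that $h$ itself satisfies \eqref{1eq17} for $\lambda f+\mu g$. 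Since the associated RB operator is contractive with factor $\|\alpha\|_{\infty}<1$, its fixed point is unique; hence $h=(\lambda f+\mu g)^{(q,\alpha)}_n=\mathcal{F}_n^{(q,\alpha)}(\lambda f+\mu g)$, which is precisely linearity.

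For boundedness, I would begin from \eqref{1eq14} with the choice $b=M_{n,q}f$, namely $\|f^{(q,\alpha)}_n-f\|_{\infty}\le\frac{\|\alpha\|_{\infty}}{1-\|\alpha\|_{\infty}}\,\|f-M_{n,q}f\|_{\infty}$, and then apply the triangle inequality to pass to $\|f^{(q,\alpha)}_n\|_{\infty}$. The remaining ingredient is a uniform control of $\|M_{n,q}f\|_{\infty}$ by $\|f\|_{\infty}$. Here I would use that, for each fixed $n$ and $q$, $M_{n,q}$ is a positive linear operator reproducing the constants, i.e. $M_{n,q}\mathbf{1}=\mathbf{1}$, a standard property of MKZ-type operators that can be cited from \cite{Trif,Heping}; positivity then gives $\|M_{n,q}f\|_{\infty}\le\|f\|_{\infty}$ and therefore $\|f-M_{n,q}f\|_{\infty}\le 2\|f\|_{\infty}$. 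Combining these estimates yields
\[
\|\mathcal{F}_n^{(q,\alpha)}(f)\|_{\infty}\le\left(1+\frac{2\|\alpha\|_{\infty}}{1-\|\alpha\|_{\infty}}\right)\|f\|_{\infty}=\frac{1+\|\alpha\|_{\infty}}{1-\|\alpha\|_{\infty}}\,\|f\|_{\infty},
\]
which bounds the operator norm and completes the boundedness claim.

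I do not anticipate a genuine obstacle here; the only point requiring care is bookkeeping. The RB fixed-point space $C_f(I)$ carries the endpoint conditions $g(x_1)=f(x_1)$ and $g(x_N)=f(x_N)$, which vary with the data function, so a naive argument over a single fixed domain would be awkward. Working directly with \eqref{1eq17} sidesteps this dependence entirely, since $M_{n,q}$ preserves endpoint values and the linear combination $h$ automatically interpolates $\lambda f+\mu g$ at $x_1$ and $x_N$. The mild subtlety worth flagging is that the constant-reproduction and positivity of the $q$-MKZ operator should be quoted rather than re-derived, as they underlie the uniform bound $\|M_{n,q}\|\le 1$ used above.
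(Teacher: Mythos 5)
Your proof is correct, and its key analytic ingredient --- the bound $\|M_{n,q}f\|_{\infty}\leq\|f\|_{\infty}$ obtained from positivity of $M_{n,q}$ together with the constant-reproduction property $M_{n,q}e_0=e_0$ --- is exactly the one the paper uses. The difference lies in how the conclusion is transferred from $M_{n,q}$ to $\mathcal{F}_n^{(q,\alpha)}$: the paper stops after bounding $M_{n,q}$ and cites \cite{Navascues_Chand} for the general statement that an $\alpha$-fractal operator whose base function is produced by a bounded linear operator is itself bounded and linear, whereas you prove this transfer from scratch --- linearity by checking that $\lambda f_n^{(q,\alpha)}+\mu g_n^{(q,\alpha)}$ satisfies the self-referential equation \eqref{1eq17} for the data function $\lambda f+\mu g$ and invoking uniqueness of the fixed point of the contractive RB operator, and boundedness via the perturbation estimate \eqref{1eq14} combined with $\|f-M_{n,q}f\|_{\infty}\leq 2\|f\|_{\infty}$. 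Your route is longer but self-contained and yields the explicit operator-norm bound $\|\mathcal{F}_n^{(q,\alpha)}\|\leq\frac{1+\|\alpha\|_{\infty}}{1-\|\alpha\|_{\infty}}$, which the citation-based proof does not exhibit; the paper's route is shorter but leans on the cited general theory. One point you handled correctly and that deserves the emphasis you gave it: the uniqueness argument only applies once the candidate fixed point is known to lie in the space $C_{\lambda f+\mu g}(I)$ on which the RB operator is a self-map, and this holds precisely because $f_n^{(q,\alpha)}$ and $g_n^{(q,\alpha)}$ interpolate $f$ and $g$ at $x_1,x_N$ and $M_{n,q}$ preserves endpoint values.
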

\begin{proof}
We know from \cite{Trif} that $M_{n,q}$ is a positive linear operator. Further, it is known that  $M_{n,q}e_0(x)= e_0(x)$, where $e_0(x)\equiv 1$. Then,
$$-\|f\|_{\infty} e_0 \leq f \leq \|f\|_{\infty}e_0 \quad\implies\quad -\|f\|_{\infty} M_{n,q} e_0 \leq M_{n,q}f \leq \|f\|_{\infty}M_{n,q}e_0.$$ Thus, $\|M_{n,q}f\|_{\infty} \leq \|f\|_{\infty}M_{n,q}e_0=\|f\|_{\infty}.$ Hence $M_{n,q}$ is a bounded operator.  By  reference \cite{Navascues_Chand},  $\mathcal{F}_n^{(q,\alpha)}$ is a linear and bounded operator.
\end{proof} 
 \section{Constrained Quantum MKZ-Fractal Approximation}\label{1sec4}
 In this section, we study constrained approximation by quantum MKZ-fractal functions.
 \begin{theorem}\label{1th4.1}
 Let $f\in C(I)$ and $f\geq 0$ on $I$. Let $\Delta=\{x_1,...,x_N\}$ be a partition of I satisfying the condition $x_1<....<x_N$, and let $\{q_n\}_{n=1}^{\infty}$ be sequence in (0,1] such that $\lim\limits_{n\to \infty} q_n =1$. Then, the sequence $\{\f_{(q_n,n)}\}_{n=1}^{\infty}$ of IFSs determines a sequence $\{f^{(q_n,\alpha)}_n\}_{n=1}^{\infty}$ of non-negative quantum MKZ-fractal functions that converges uniformly to $f$ if the scaling functions $\alpha_i(x)$ are chosen to satisfy the following two conditions:
 \begin{enumerate}
\item  $\|\alpha\|_{\infty}<1$;
\item For all   $i\in \mathbb{N}_{N-1}$,
 \begin{align}\label{1eq22}
\max\left\{ \frac{-\phi(f,i)}{C_n-\phi_n},\right. & \left. -\frac{C_n-\Phi(f,i)}{\Phi_n}\right\}  \leq \alpha_i(x) &\nonumber\\
& \leq \min \left\{ \frac{\phi(f,i)}{\Phi_n},\frac{C_n-\Phi(f,i)}{C_n-\phi_n}\right\}, \quad x\in I.
 \end{align}
Here, we set $\phi(f,i) :=\min\limits_{x\in I}f(u_i(x))$, $\Phi(f,i) :=\max\limits_{x\in I}f(u_i(x))$, $\phi_n :=\min\limits_{x\in I}M_{n,q_n}f(x)$,  and $\Phi_n :=\max\limits_{x\in I}M_{n,q_n}f(x)$. $C_n$ denotes a positive real number strictly larger than $\max\{\phi_n, \|f\|_{\infty}\}$.
 \end{enumerate}
 \end{theorem}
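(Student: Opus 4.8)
The plan is to split the statement into its two assertions and handle them separately: the non-negativity of each $f^{(q_n,\alpha)}_n$ will be realized as an \emph{invariance} property of the Read--Bajraktarevi\'c operator, while the uniform convergence will be inherited directly from the estimate already used in Theorem~\ref{1th3.1}. Fix $n$, write $b:=M_{n,q_n}f$, and let $T$ be the RB-operator of \eqref{1eq12} with this base function, so that $f^{(q_n,\alpha)}_n$ is its unique fixed point in $C_f(I)$. I would introduce the set
\[
\mathcal{K} := \{\,g\in C_f(I) : 0\le g(x)\le C_n \ \text{ for all } x\in I\,\}.
\]
Since $0\le f\le\|f\|_\infty<C_n$, the function $f$ lies in $\mathcal{K}$, so $\mathcal{K}$ is non-empty; it is evidently closed in $(C_f(I),\rho)$ and hence complete. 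By Condition~1 we have $\|\alpha\|_\infty<1$, so $T$ is a contraction on $C_f(I)$. Thus it suffices to prove the invariance $T(\mathcal{K})\subseteq\mathcal{K}$: the Banach fixed point theorem then forces the fixed point $f^{(q_n,\alpha)}_n$ to lie in $\mathcal{K}$, yielding $f^{(q_n,\alpha)}_n\ge 0$.

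The core of the proof is this invariance. For $g\in\mathcal{K}$ and $x\in u_i(I)$, set $t:=u_i^{-1}(x)\in I$, so that
\[
Tg(x) = f(u_i(t)) + \alpha_i(t)\,\bigl(g(t) - M_{n,q_n}f(t)\bigr).
\]
From $g\in\mathcal{K}$ and the definitions of $\phi_n,\Phi_n$ one gets $g(t)-M_{n,q_n}f(t)\in[-\Phi_n,\,C_n-\phi_n]$, while $f(u_i(t))\in[\phi(f,i),\Phi(f,i)]$. I would then bound $Tg(x)$ by a case distinction on the sign of $\alpha_i(t)$. When $\alpha_i(t)\ge 0$, the product term is smallest at the argument $-\Phi_n$ and largest at $C_n-\phi_n$; replacing $f(u_i(t))$ by $\phi(f,i)$ resp.\ $\Phi(f,i)$ and invoking the \emph{upper} bound of \eqref{1eq22} (that is, $\alpha_i\le\phi(f,i)/\Phi_n$ for the lower estimate and $\alpha_i\le(C_n-\Phi(f,i))/(C_n-\phi_n)$ for the upper estimate) yields $0\le Tg(x)\le C_n$. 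When $\alpha_i(t)<0$, the two extrema of the product term switch, and the \emph{lower} bound of \eqref{1eq22} (that is, $\alpha_i\ge-\phi(f,i)/(C_n-\phi_n)$ and $\alpha_i\ge-(C_n-\Phi(f,i))/\Phi_n$) again gives $0\le Tg(x)\le C_n$. Since this holds on each subinterval $u_i(I)$ and $Tg\in C_f(I)$, we conclude $Tg\in\mathcal{K}$, which is exactly the required invariance.

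For the convergence I would invoke \eqref{1eq14} with $b=M_{n,q_n}f$, namely
\[
\|f^{(q_n,\alpha)}_n - f\|_\infty \le \frac{\|\alpha\|_\infty}{1-\|\alpha\|_\infty}\,\|f - M_{n,q_n}f\|_\infty ,
\]
precisely as in the proof of Theorem~\ref{1th3.1}. Because $\|M_{n,q_n}f-f\|_\infty\to 0$ as $n\to\infty$ (the MKZ convergence recorded in \eqref{1eq20}, using $q_n\to 1$), while the prefactor $\|\alpha\|_\infty/(1-\|\alpha\|_\infty)$ stays bounded by Condition~1, the right-hand side tends to $0$ and the sequence converges uniformly to $f$. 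I would also note that $C_n$ may be taken to be a single fixed constant exceeding $\|f\|_\infty$ for all $n$, since $\|M_{n,q_n}f\|_\infty\le\|f\|_\infty$ by Theorem~\ref{1th3.2}, which keeps the hypotheses on $\alpha$ consistent across the sequence.

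The step I expect to be the main obstacle is the sign-based case analysis establishing $T(\mathcal{K})\subseteq\mathcal{K}$. One must check that the four one-sided inequalities packaged into \eqref{1eq22} are exactly the conditions ensuring that, whichever sign $\alpha_i(t)$ takes, the worst-case value of $f(u_i(t))+\alpha_i(t)\,(g(t)-M_{n,q_n}f(t))$ still lies in $[0,C_n]$. Getting the direction of each inequality right, and confirming that replacing $f(u_i(t))$ by the extrema $\phi(f,i)$ and $\Phi(f,i)$ only tightens the requirement, is where the care is needed; once the invariance is secured, both the non-negativity and the convergence follow the template of Theorem~\ref{1th3.1}.
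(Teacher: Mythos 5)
Your proposal is correct and follows essentially the same route as the paper: the heart of both arguments is the identical sign-based case analysis showing that, under the four one-sided inequalities packaged in \eqref{1eq22}, the value $f(u_i(x))+\alpha_i(x)\,(y-M_{n,q_n}f(x))$ stays in $[0,C_n]$ whenever $y\in[0,C_n]$, and the uniform convergence is in both cases inherited from the estimate \eqref{1eq14} together with $\|M_{n,q_n}f-f\|_\infty\to 0$ as in Theorem~\ref{1th3.1}. The only difference is packaging: the paper phrases the invariance at the level of the IFS maps $v_{n,i}$ and then appeals to the recursive generation of the attractor (``if it holds at the first iteration it holds at all subsequent ones''), whereas you phrase it as $T(\mathcal{K})\subseteq\mathcal{K}$ for the closed set $\mathcal{K}=\{g\in C_f(I):0\le g\le C_n\}$ and conclude via the Banach fixed point theorem on $\mathcal{K}$ — a cleaner and somewhat more rigorous formalization of the same idea.
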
 
\begin{proof}
By Theorem \ref{1th3.1}, there exists a sequence $\{f^{(q_n,\alpha)}_n\}_{n=1}^{\infty}$ of quantum MKZ-fractal functions that converges to $f$. Now suppose  that 
    $f\in C(I)$ and $f\geq0$ on $I$. It is known (cf. for instance, \cite{Trif}) that $M_{n,q}$ is positive linear operator and thus $M_{n,q}f\geq 0$ on $I$. This implies that $\Phi_n$ is positive. We have that
\begin{equation}\label{1eq23}
    \begin{split}
        f^{(q_n,\alpha)}_n(u_i(x)) & =v_{n,i}(x, f^{(q_n,\alpha)}_n(x))\\ &=f(u_i(x))+\alpha_i(x)(f^{(q_n,\alpha)}_n(x)-M_{n,q_n}f(x)),\quad  x\in I.
        \end{split}
\end{equation}
Clearly, $v_{n,i}(x,f^{(q_n,\alpha)}_n(x)) \in [0,C_n], i\in \mathbb{N}_{N-1}$, iff $ f^{(q_n,\alpha)}_n(u_i(x))\in[0,C_n]$ for all $x\in I$. Note that $I$ is attractor of the IFS $\{I; u_i(x), i\in \mathbb{N}\}$. Since $f^{(q_n,\alpha)}_n$ is defined recursively, we only need to show that $ f^{(q_n,\alpha)}_n(u_i(x)) \geq0$ whenever $ f^{(q_n,\alpha)}_n(x)\geq 0$ using suitable restrictions on the functions $\alpha_i$. 

To this end, suppose $(x,y)\in I\times[0,C_n]$ and $\alpha_i$, $i\in \mathbb{N}_{N-1}$, is such that $|\alpha_i(x)|<1$, for all $x\in I$. Now, there are two cases: \\

\noindent
\textbf{Case 1:} $ 0\leq \alpha_i(x)<1$, for all $x\in I$. 

Then $ 0\leq y\leq C_n$ yields
$q_{n,i}\leq \alpha_i(x)y+q_{n,i}\leq C_n\alpha_i(x)+q_{n,i}$. Therefore, 
\[
0\leq v_{n,i}(x,y)=\alpha_i(x)y+q_{n,i}\leq C_n, \quad i\in \mathbb{N}_{N-1},
\] 
is true for all $(x,y)\in I\times[0,C_n]$ if
\begin{equation}\label{1eq24}
\begin{split}
&f(u_i(x))-\alpha_i(x)M_{n,q_n}(f,x)\geq0,\\
&f(u_i(x))-\alpha_i(x)M_{n,q_n}(f,x)\leq C_n(1-\alpha_i(x)), \quad x\in I.
\end{split}
\end{equation}
As $f(u_i(x))\geq \phi(f,i)$ and $M_{n,q_n}(f,x)\leq \Phi_n,$ it is easy to verify that  $f(u_i(x))-\alpha_i(x)M_{n,q_n}(f,x) \geq 0$ if $\phi(f,i)-\alpha_i(x)\Phi_n\geq0$ which is equivalent to the condition 
\[
0\leq \alpha_i(x) \leq \frac{\phi(f,i)}{\Phi_n}. 
\]
Similarly, using 
$f(u_i(x))\leq \Phi(f,i)$ and $M_{n,q_n}(f,x)\geq \phi_n$,  the second inequality in (\ref{1eq24}) is true, whenever $\Phi(f,i)-\alpha_i(x) \phi_n \leq C_n(1-\alpha_i(x))$  which is equivalent to 
\[
0\leq \alpha_i(x)\leq \frac{C_n-\Phi(f,i)}{C_n-\phi_n}.
\]  
Combining, these two subcases, we  obtain that $v_{n,i}(x,y) \in [0,C_n]$, $i\in \mathbb{N}_{N-1}$, for all $(x,y)\in I\times[0,C_n]$ if 
\[
0\leq \alpha_i(x) \leq \min \left\{ \frac{\phi(f,i)}{\Phi_n},\frac{C_n-\Phi(f,i)}{C_n-\phi_n}\right\}.
\]

\noindent\textbf{Case 2}: $ -1<\alpha_i(x)\leq0$, for all $x\in I$. 

Then $ 0\leq y\leq C_n$ yields 
$C_n\alpha_i(x)+q_{n,i}\leq \alpha_i(x)y+q_{n,i}\leq q_{n,i}$. Hence,
\[
0\leq v_{n,i}(x,y)=\alpha_i(x)y+q_{n,i}\leq C_n, \quad i\in \mathbb{N}_{N-1},
\]
is valid for all $(x,y)\in I\times[0,C_n]$ whenever
\begin{equation}\label{1eq25}
\begin{split}
& f(u_i(x))-\alpha_i(x)M_{n,q_n}f(x)\leq C_n,\\ 
&C_n\alpha_i(x)+ f(u_i(x))-\alpha_i(x)M_{n,q_n}f(x)\geq0, \quad x\in I.
\end{split}
\end{equation}
As $f(u_i(x))\leq \Phi(f,i)$ and $M_{n,q_n}(f,x)\leq \Phi_n$,  then from the first inequality in (\ref{1eq25}), we obtain $f(u_i(x))-\alpha_i(x)M_{n,q_n}(f,x)\leq \Phi(f,i)-\alpha_i(x)\Phi_n\leq C_n$. Hence, 
\[
\alpha_i(x) \geq -\frac{C_n-\Phi(f,i)}{\Phi_n}. 
\]
Again, due to the fact $M_{n,q_n}f(x)\geq \phi_n$ and  $f(u_i(x))\geq \phi(f,i)$, we observe that second inequality in (\ref{1eq25}) holds if 
\[
\alpha_i(x)\geq \frac{-\phi(f,i)}{C_n-\phi_n}. 
\]
Combining these two results, we conclude that 
$v_{n,i}(x,y) \in [0,C_n]$, $i\in \mathbb{N}_{N-1}$, for all $(x,y)\in I\times[0,C_n]$ if  \[
\max\left\{ \frac{-\phi(f,i)}{C_n-\phi_n},-\frac{C_n-\Phi(f,i)}{\Phi_n}\right\}\leq \alpha_i(x)\leq 0. 
\]
Both cases yield the desired restrictions on the functions $\alpha_i$ in (\ref{1eq22}).
\end{proof}
From Theorem \ref{1th4.1}, it is found that for  every continuous function $f$ on $I$ with $f\geq 0$ on $I$, there exists a sequence of non-negative quantum MKZ-fractal functions which converges to $f$ in the sup-norm.

\begin{Example}
Here, we give an example to illustrate Theorem \ref{1th4.1}. Let $I:=[0,1]$ and let $f:I\to [0,2]$, $x\mapsto \sin(\pi x)+1$. Further, let $\Delta :=\{0, \frac{1}{3}, \frac{2}{3}, 1\}$, and define $\alpha_1(x) :=0.1298/(1+\exp(-10x))$, $\alpha_2(x):=0.100/(1+\exp(-10x))$,  and $\alpha_3(x):=0.2168/(1+\exp(-10x))$. 

Assume that $q_n:=\frac{2}{\pi}\arctan(n)$, $n\in \mathbb{N}$ . Then, the scaling functions $\alpha_i$ and the sequence $\{q_n\}_{n=1}^{\infty}$ fulfill the conditions stated in Theorem \ref{1th4.1}. 
In Figure \ref{2fig} (a), the fractal quantum MKZ-fractal function $f^{(q_2,\alpha)}_{2}$ is shown and provides a positive approximation for  $f$. 

If we choose  $\alpha_1(x) :=0.7$, $\alpha_2(x) :=-0.9$, and $\alpha_3(x) :=0.9$ instead, then $\alpha $ is not consistent with the condition given in (\ref{1eq22}). From Fig. \ref{2fig} (b),  we observe  that the MKZ $(q,\alpha)$-fractal function $f^{(q_2,\alpha)}_{2}$ is non-positive in nature for this choice of $\alpha$. 
\end{Example}
\begin{figure}[ht!]
\centering
\begin{minipage}{0.48\textwidth}\label{1fig1a}
\epsfig{file=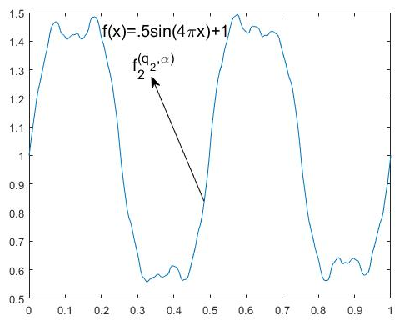,scale=0.9}\\
\centering{(a) Positive quantum  MKZ-fractal function }
\end{minipage}\hfill
\begin{minipage}{0.48\textwidth}\label{1fig1b}
\epsfig{file=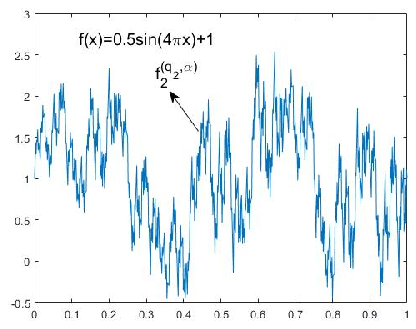,scale=0.9}\\
\centering{(b) Non-positive quantum MKZ-fractal function }
\end{minipage}\hfill
\caption{Positivity of  MKZ-fractal function according to Theorem \ref{1th4.1} }
\label{2fig}
\end{figure}

The following theorem gives the existence of a double sequence of positive quantum MKZ-fractal functions, which converges to $f$ in the sup-norm.

\begin{theorem}\label{1th4.2}
Let $\{f_k\}_{k=1}^{\infty}$ be a sequence of positive functions in $C(I)$ that converges to  $f\in C(I)$. Let $\Delta=\{x_1,...,x_N\}$ be a partition of $I$ satisfying the condition $x_1<...<x_N$ and let $\{q_n\}_{n=1}^{\infty}$ be sequence in (0,1] such that $\lim\limits_{n\to \infty} q_n =1$. 

Suppose that $u_i:I\to I_i$, $i\in \mathbb{N}_{N-1}$, are affine maps of the form $u_i(x)=a_ix+b_i$ satisfying the conditions $u_i(x_1)=x_i$, $u_i(x_N)=x_{i+1},$ and let 
\[
v_{k,n,i}^{\dagger}(x,y):=f_k(u_i(x))+\alpha_i(x)(y-M_{n,q_n}(f_k,x)), \quad i\in \mathbb{N}_{N-1}.
\]
 Let $f^{(q_n,\alpha)}_{k,n}$ be the MKZ-fractal function associated with the IFS 
\[
\f_{k,(q_n,n)}^{\dagger} :=\{I\times K;(u_i(x),v_{k,n,i}^{\dagger}(x,y)),\; i\in \mathbb{N}_{N-1}\}. 
\]
Then, the double sequence of IFSs $\{ \{\f_{k,(q_n,n)}^{\dagger}\}_{n=1}^{\infty} \}_{k=1}^{\infty}$  generates a double sequence $\{\{f^{(q_n,\alpha)}_{k,n}\}_{n=1}^{\infty} \}_{k=1}^{\infty}$ of positive quantum MKZ-fractal functions which converges to $f$ in sup norm provided that all scaling functions $\alpha_i$ obey the conditions:
\begin{enumerate}
\item $\|\alpha_i\|_{\infty}<1$;
\item For all $i\in \mathbb{N}_{N-1}$,
\begin{align}\label{1eq26}
\max & \left\{ \frac{-\phi(f_k,i)}{C_{n,k}^{\dagger} -\phi_{n,k}(f_k)},-\frac{C_{n,k}^{\dagger}-\Phi(f_k,i)}{\Phi_{n,k}(f_k)}\right\} \leq \alpha_i(x) \nonumber\\
& \leq \min \left\{ \frac{\phi(f,i)}{\Phi_{n,k}(f_k)},\frac{C_{n,k}^{\dagger}-\Phi(f_k,i)}{C_{n,k}^{\dagger}-\phi_{n,k}(f_k)}\right\}, \quad x\in I.
 \end{align}
 where $\phi(f_k,i) :=\min\limits_{x\in I}f_k(u_i(x))$,  $\Phi(f_k,i) :=\max\limits_{x\in I}f_k(u_i(x))$, $\phi_{n,k}(f_k)$ $:=\min\limits_{x\in I}M_{n,q}(f_k,x)$, and $\Phi_{n,k}(f_k):=\max\limits_{x\in I}M_{n,q}(f_k,x)$. Here, $C_{n,k}^{\dagger}$ denotes a positive real number strictly greater than $\max\{\phi_{n,k}(f_k), \|f_k\|_{\infty}\}$.
\end{enumerate}
\end{theorem}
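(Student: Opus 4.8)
The plan is to establish the two assertions of the theorem separately: first that each $f^{(q_n,\alpha)}_{k,n}$ is non-negative, and then that the double sequence converges to $f$ in the sup-norm. For the positivity I would observe that, for each fixed $k$, the hypotheses reduce exactly to those of Theorem \ref{1th4.1} applied to the positive function $f_k$: the maps $v_{k,n,i}^{\dagger}$ are precisely the vertical maps of the quantum MKZ $(q,\alpha)$-construction for $f_k$ with base function $M_{n,q_n}f_k$, and the bounds \eqref{1eq26} are the verbatim analogues of \eqref{1eq22} with $f$ replaced by $f_k$ and $C_n$ replaced by $C_{n,k}^{\dagger}$. Hence the same two-case argument (according to the sign of $\alpha_i(x)$) showing that $v_{k,n,i}^{\dagger}(x,y)\in[0,C_{n,k}^{\dagger}]$ whenever $y\in[0,C_{n,k}^{\dagger}]$ carries over without change, and the recursive structure of the attractor then forces $f^{(q_n,\alpha)}_{k,n}\geq 0$ on $I$ for every $k$ and every $n$.

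For the convergence I would start from the triangle inequality,
\[
\|f^{(q_n,\alpha)}_{k,n}-f\|_{\infty}\leq \|f^{(q_n,\alpha)}_{k,n}-f_k\|_{\infty}+\|f_k-f\|_{\infty},
\]
and bound the first summand by the analogue of \eqref{1eq14} for the IFS with base function $M_{n,q_n}f_k$ and interpolated function $f_k$, namely
\[
\|f^{(q_n,\alpha)}_{k,n}-f_k\|_{\infty}\leq \frac{\|\alpha\|_{\infty}}{1-\|\alpha\|_{\infty}}\,\|M_{n,q_n}f_k-f_k\|_{\infty}.
\]
The difficulty is that $\|M_{n,q_n}f_k-f_k\|_{\infty}$ carries both indices and, through the modulus of continuity $\omega(f_k,\cdot)$, is not obviously uniform in $k$. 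The decisive step, which I expect to be the crux of the argument, is to decouple the two indices by inserting $M_{n,q_n}f$ and exploiting that $M_{n,q_n}$ is a positive linear operator of norm one (Theorem \ref{1th3.2}), so that $\|M_{n,q_n}(f_k-f)\|_{\infty}\leq\|f_k-f\|_{\infty}$. This yields
\[
\|M_{n,q_n}f_k-f_k\|_{\infty}\leq \|M_{n,q_n}(f_k-f)\|_{\infty}+\|M_{n,q_n}f-f\|_{\infty}+\|f-f_k\|_{\infty}\leq 2\|f_k-f\|_{\infty}+\|M_{n,q_n}f-f\|_{\infty}.
\]

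Combining these estimates and abbreviating $C:=\|\alpha\|_{\infty}/(1-\|\alpha\|_{\infty})$, I obtain
\[
\|f^{(q_n,\alpha)}_{k,n}-f\|_{\infty}\leq (2C+1)\,\|f_k-f\|_{\infty}+C\,\|M_{n,q_n}f-f\|_{\infty}.
\]
On the right-hand side the first term depends only on $k$ and tends to $0$ as $k\to\infty$ by hypothesis, while the second depends only on $n$ and tends to $0$ as $n\to\infty$ by \eqref{1eq20}. Given $\varepsilon>0$ I would therefore choose $K$ with $(2C+1)\|f_k-f\|_{\infty}<\varepsilon/2$ for $k\geq K$ and $N$ with $C\|M_{n,q_n}f-f\|_{\infty}<\varepsilon/2$ for $n\geq N$; for all $k\geq K$ and $n\geq N$ the bound then lies below $\varepsilon$, which is precisely the Pringsheim convergence of the double sequence to $f$. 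The only genuine obstacle is the decoupling identity in the previous paragraph; once the norm-one property of $M_{n,q_n}$ is used to separate the $k$-dependence from the $n$-dependence, the remainder is a routine two-parameter $\varepsilon$-argument.
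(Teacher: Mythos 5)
Your proof is correct, and its skeleton matches the paper's: positivity is delegated to Theorem \ref{1th4.1} applied to each $f_k$, and convergence comes from the triangle inequality combined with the fixed-point estimate $\|f^{(q_n,\alpha)}_{k,n}-f_k\|_{\infty}\leq \frac{\|\alpha\|_{\infty}}{1-\|\alpha\|_{\infty}}\,\|f_k-M_{n,q_n}f_k\|_{\infty}$, which is exactly the paper's \eqref{1eq30}. The genuine difference is in how $\|f_k-M_{n,q_n}f_k\|_{\infty}$ is controlled. The paper applies the convergence theorem of \cite{Heping} to each fixed $f_k$ and extracts a threshold $k_2$ as in \eqref{1eq28}; since that convergence holds for each $k$ separately, $k_2$ a priori depends on $k$, and the final claim that $\|f^{(q_n,\alpha)}_{k,n}-f\|_{\infty}<\epsilon$ for all $k,n\geq k_0=\max\{k_1,k_2\}$ tacitly assumes a uniformity in $k$ that is never established. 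Your decoupling step --- inserting $M_{n,q_n}f$ and using that $M_{n,q_n}$ is a positive linear operator fixing constants, hence of sup-norm one (as in the proof of Theorem \ref{1th3.2}), to obtain $\|M_{n,q_n}f_k-f_k\|_{\infty}\leq 2\|f_k-f\|_{\infty}+\|M_{n,q_n}f-f\|_{\infty}$ --- supplies precisely this missing uniformity: the right-hand side splits into a term depending only on $k$ and a term depending only on $n$, so the two thresholds can be chosen independently and Pringsheim convergence of the double sequence follows from \eqref{1eq20}. In short, your argument is not merely a variant; it repairs a genuine gap in the paper's proof, at the modest cost of the slightly worse constant $2C+1$ (with $C=\|\alpha\|_{\infty}/(1-\|\alpha\|_{\infty})$) in front of $\|f_k-f\|_{\infty}$.
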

\begin{proof}
It follows easily from Theorem \ref{1th4.1} that the MKZ-fractal functions $f^{(q_n,\alpha)}_{k,n}$ are positive on $I$ if the scaling functions $\alpha_i$, $i\in \mathbb{N}_{N-1} $, obey the inequalities in (\ref{1eq26}).

Let $\epsilon >0$. As $\{ f_k\}_{k=1}^{\infty}$ is a sequence of positive functions in $C(I)$ that converges to $f$ in $\|\cdot\|_{\infty}$, there exists a natural number $k_1 \in \mathbb{N}$ such that 
\begin{equation}\label{1eq27}
\|f_k-f \|_{\infty}<\frac{\epsilon}{2},\quad \forall \,k\geq k_1.
\end{equation}
Employing Theorem 2 of \cite{Heping}, we can see that for each $k\in \mathbb{N}$ , $\|M_{n,q_n}f_k-f_k\|_{\infty} \to 0$, as $n\to \infty$. Thus, there exists a $k_2 \in \mathbb{N}$ such that 
\begin{equation}\label{1eq28}
\|M_{n,q_n}f_k-f_k\|_{\infty}<\frac{\epsilon(1- \|\alpha\|_{\infty})}{2\|\alpha\|_{\infty}},~~ \forall n\geq k_2.
\end{equation}
Given that $f^{(q_n,\alpha)}_{k,n}$ is the MKZ-fractal function obtained from the IFS $\f_{k,n}^{\dagger}$, $f^{(q_n,\alpha)}_{k,n}$ satisfies the functional equation
 \begin{align}\label{1eq29}
 f^{(q_n,\alpha)}_{k,n} &=f_k \nonumber\\
 & \qquad + \sum_{i=1}^{N-1}(\alpha_i\circ u_i^{-1})(f^{(q_n,\alpha)}_{k,n}\circ u_i^{-1} - M_{n,q_n} f_k\circ u_i^{-1})\chi_{u_i(I)}
 \end{align}
 on $I$. Eqn. (\ref{1eq29}) implies the inequality 
 \begin{equation}\label{1eq30}
\| f^{(q_n,\alpha)}_{k,n}-f_k\|_{\infty}\leq \frac{\|\alpha\|_{\infty}}{(1- \|\alpha\|_{\infty})}\|f_k-M_{n,q_n}f_k\|_{\infty}.
 \end{equation}
  From (\ref{1eq28}) and (\ref{1eq30}), we obtain 
 \begin{equation}\label{1eq31}
\| f^{(q_n,\alpha)}_{k,n}-f_k\|_{\infty}<\frac{\epsilon}{2}, \quad\forall~ n\geq k_2.
 \end{equation}
 Combining (\ref{1eq27}) and (\ref{1eq31}), shows that 
 for a given $\epsilon>0$, there exists a $k_0 :=\max\{k_1 ,k_2\}$ such that 
 \begin{equation*}\label{1eq33}
      \| f^{(q_n,\alpha)}_{k,n}-f\|_{\infty}< \epsilon, \quad \forall ~k,n \geq k_0,
 \end{equation*}
confirming that the sequence $\{\{f^{(q_n,\alpha)}_{k,n}\}_{n=1}^{\infty}\}_{k=1}^{\infty}$ converges uniformly to $f$.
\end{proof}
The following theorem gives the existence of a one-sided sequential approximation by MKZ-fractal functions.
\begin{theorem}\label{1th4.3}
Let $f,g\in C(I)$ with $f\geq g$ on $I$. Let $\Delta=\{x_1,...,x_N\}$ be a partition of $I$ satisfying the condition $x_1<...<x_N$ and let $\{q_n\}_{n=1}^{\infty}$ be a sequence in $(0,1]$ such that $\lim\limits_{n\to \infty} q_n =1$. For all $n\in \mathbb{N}$, let $f^{(q_n,\alpha)}_n$ denote the MKZ-fractal functions associated with  the IFS $\f_n$. 

Then the sequence of IFSs $\{\f_n\}_{n=1}^{\infty} $ determines a sequence of MKZ-fractal functions $\{f^{(q_n,\alpha)}_n\}_{n=1}^{\infty}$ such that $f^{(q_n,\alpha)}_n\geq g$ on $I$, $n\in \mathbb{N}$, and this sequence converges uniformly to $f$ provided that the scaling function $\alpha$ of each $f^{(q_n,\alpha)}_n$ satisfies the following conditions:
\begin{enumerate}
\item $\|\alpha \|_{\infty}<1$;
\item For each $i\in \mathbb{N}_{N-1}$,
\begin{equation}\label{1eq34}
    0\leq \alpha_i(x)\leq \min\left\{\frac{\phi(f-g,i)}{\Phi_n(f)-\phi(g)},1 \right\},\quad x\in I,
\end{equation}
where $\phi(f-g,i):=\min\limits_{x\in I}(f-g)(u_i(x))$, $\Phi_n(f):=\max\limits_{x\in I}M_{n,q_n}f(x)$, and  $\phi(g):=\min\limits_{x\in I}g(x))$.
\end{enumerate}
\end{theorem}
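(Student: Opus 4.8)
The plan is to separate the statement into two independent assertions: the one-sided bound $f^{(q_n,\alpha)}_n \geq g$ on $I$ for each $n$, and the uniform convergence $f^{(q_n,\alpha)}_n \to f$. The convergence is inherited directly from Theorem \ref{1th3.1}: since condition (1) guarantees $\|\alpha\|_\infty < 1$, the estimate (\ref{1eq14}) gives $\|f^{(q_n,\alpha)}_n - f\|_\infty \leq \frac{\|\alpha\|_\infty}{1-\|\alpha\|_\infty}\|f - M_{n,q_n}f\|_\infty$, and $\|f - M_{n,q_n}f\|_\infty \to 0$ by \cite[Theorem 2]{Heping} because $q_n \to 1$. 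Hence the entire burden of the proof is the order constraint, which I would establish exactly as in Theorem \ref{1th4.1}, but now using only a one-sided (lower) estimate.

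For the constraint I would exploit that $I$ is the attractor of $\{I; u_i : i\in\mathbb{N}_{N-1}\}$ and that $f^{(q_n,\alpha)}_n$ is built recursively through the self-referential relation (cf. (\ref{1eq23}))
\[
f^{(q_n,\alpha)}_n(u_i(x)) = f(u_i(x)) + \alpha_i(x)\bigl(f^{(q_n,\alpha)}_n(x) - M_{n,q_n}f(x)\bigr), \quad x \in I.
\]
Equivalently, setting $\mathcal{G}_g := \{h \in C_f(I) : h \geq g \text{ on } I\}$, which is a closed (hence complete) subset of $(C_f(I),\rho)$ and is nonempty since $f \geq g$ puts $f$ in it, it suffices to show that the RB operator $T$ of (\ref{1eq12}) with $b = M_{n,q_n}f$ maps $\mathcal{G}_g$ into itself; the unique fixed point $f^{(q_n,\alpha)}_n$ then lies in $\mathcal{G}_g$. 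Thus the key step is: assuming $h \geq g$, prove $Th \geq g$. Because condition (2) forces $\alpha_i(x) \geq 0$, the map $y \mapsto f(u_i(x)) + \alpha_i(x)(y - M_{n,q_n}f(x))$ is nondecreasing, so $h(x) \geq g(x)$ yields $Th(u_i(x)) \geq f(u_i(x)) + \alpha_i(x)\bigl(g(x) - M_{n,q_n}f(x)\bigr)$. It then remains to verify $f(u_i(x)) + \alpha_i(x)(g(x) - M_{n,q_n}f(x)) \geq g(u_i(x))$, i.e. $(f-g)(u_i(x)) \geq \alpha_i(x)\bigl(M_{n,q_n}f(x) - g(x)\bigr)$. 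Bounding the left side below by $\phi(f-g,i)$ and the right side above by $\alpha_i(x)\bigl(\Phi_n(f) - \phi(g)\bigr)$ (valid since $\alpha_i \geq 0$, $M_{n,q_n}f \leq \Phi_n(f)$, and $-g \leq -\phi(g)$), the inequality is guaranteed precisely when $\alpha_i(x) \leq \phi(f-g,i)/(\Phi_n(f) - \phi(g))$, which together with $\alpha_i \leq 1$ is exactly (\ref{1eq34}).

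I note that the one-sided nature simplifies matters relative to Theorem \ref{1th4.1}: there the invariant box $[0,C_n]$ required both an upper and a lower estimate and a case split on the sign of $\alpha_i$, whereas the restriction $\alpha_i \geq 0$ makes $T$ monotone and renders a single lower estimate sufficient. The step I expect to need the most care is the well-posedness of the admissible interval in (\ref{1eq34}). Since $f \geq g$ gives $\phi(f-g,i) \geq 0$, the upper bound is nonnegative, so $[0, \min\{\ldots,1\}]$ is nonempty and consistent with $\alpha_i \geq 0$; one must still check that the denominator $\Phi_n(f) - \phi(g)$ is strictly positive, which follows from $\Phi_n(f) \geq M_{n,q_n}f(x_1) = f(x_1) \geq g(x_1) \geq \phi(g)$ together with a strictness argument. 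A second point worth making explicit is that the convergence argument needs $\|\alpha\|_\infty$ bounded away from $1$ uniformly in $n$; since $\Phi_n(f) \to \max_I f$ as $n \to \infty$, the admissible upper bound in (\ref{1eq34}) stabilizes to a fixed positive quantity, so one admissible choice of $\alpha$ (or a uniformly bounded sequence $\alpha^{(n)}$) keeps $\frac{\|\alpha\|_\infty}{1-\|\alpha\|_\infty}$ bounded and secures the uniform limit.
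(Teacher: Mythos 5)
Your proof is correct, and its computational core is the same as the paper's: from $\alpha_i \geq 0$ you reduce the claim to $f\circ u_i + \alpha_i g - \alpha_i M_{n,q_n}f - g\circ u_i \geq 0$, then bound $(f-g)\circ u_i$ below by $\phi(f-g,i)$ and $\alpha_i\bigl(M_{n,q_n}f - g\bigr)$ above by $\alpha_i\bigl(\Phi_n(f)-\phi(g)\bigr)$, recovering exactly the sufficient condition (\ref{1eq34}). Where you genuinely differ is in how the pointwise inequality is transferred to the fractal function. The paper argues iteratively: $f^{(q_n,\alpha)}_n$ interpolates $f$ at the nodes, where $f \geq g$ holds, and the self-referential equation (\ref{1eq35}) generates values at the $(r+1)$-st iteration from those at the $r$-th, so preservation at one step propagates to all steps (passage to the limit and its justification are left implicit). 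You instead take $\mathcal{G}_g := \{h\in C_f(I): h\geq g\}$, note it is a nonempty closed subset of the complete space on which the RB operator $T$ is contractive, prove $T(\mathcal{G}_g)\subseteq \mathcal{G}_g$ via the monotonicity coming from $\alpha_i \geq 0$, and conclude the unique fixed point lies in $\mathcal{G}_g$. This is the rigorous packaging of the paper's induction: the closedness of $\mathcal{G}_g$ supplies precisely the limit step the paper does not state. You also make explicit two points the paper glosses over: that the denominator $\Phi_n(f)-\phi(g)$ must be (strictly) positive for (\ref{1eq34}) to be meaningful, and that if $\alpha$ were allowed to depend on $n$ one needs $\|\alpha\|_{\infty}$ bounded away from $1$ uniformly in $n$ for the convergence factor $\|\alpha\|_{\infty}/(1-\|\alpha\|_{\infty})$ not to spoil the limit; with a single fixed $\alpha$, as the paper intends, convergence follows directly from Theorem \ref{1th3.1}, which is also how the paper (implicitly) treats it.
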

\begin{proof}
By the construction of an MKZ-fractal function, we observe that $f^{(q_n,\alpha)}_n$ satisfies the following functional equation:
\begin{align}\label{1eq35}
 f^{(q_n,\alpha)}_n(& x) = f(x)\, +\nonumber\\
 & + \sum_{i=1}^{N-1}\alpha _i (u_i^{-1}(x))(f^{(q_n,\alpha)}_n(u_i^{-1}(x))- M_{n,q_n}f(u_i^{-1}(x)))\chi_{u_i(I)}(x), \quad x\in I.   
\end{align}
This functional equation is a rule to get the values of $f^{(q_n,\alpha)}_n$ at $(N-1)^{r+2}+1$ distinct points in $I$ at the $(r+1)$-th iteration using the value of $f^{(q_n,\alpha)}_n$ at $(N-1)^{r+1}+1$ points in $I$ at the $r$-th iteration.
Thus, if we can show that the result is true at the first iteration, then it is true for all
subsequent iterations.

We begin the iteration process at the nodal points $x_i$, $i\in \mathbb{N}$,
where $f^{(q_n,\alpha)}_n\geq$ $g$ as $f^{(q_n,\alpha)}_n$ interpolates $f$ at these nodes and $f\geq g$. Now, we want to verify that  $f^{(q_n,\alpha)}_n\circ u_i\geq g\circ u_i$ on $u_i(I)$. By (\ref{1eq35}), this is equivalent to proving that
\begin{equation}\label{1eq36}
     f\circ u_i + \alpha _if^{(q_n,\alpha)}_n - \alpha _i M_{n,q_n}f - g\circ u_i\geq 0\quad\text{on $u_i(I)$}.
\end{equation}
If we choose the functions $\alpha _i$ to be positive, then the above inequality is true provided that
\[
f\circ u_i + \alpha _i  g - \alpha _i M_{n,q_n}f - g\circ u_i\geq 0 .
\]
The sufficient condition for the validity of the above inequality is
\begin{equation*}\label{1eq37}
    0\leq \alpha_i(x)\leq \min\left\{\frac{\phi(f-g,i)}{\Phi_n(f)-\phi(g)} \right\}, \quad x\in I.
\end{equation*}
Therefore, if the functions $\alpha_i$, $i\in \mathbb{N}_{N-1}$, is chosen according to (\ref{1eq34}), then $f^{(q_n,\alpha)}_n \geq g$ on $I$.
\end{proof} 

\begin{Example}
Let $f(x) :=0.5 \sin(4\pi x)+1$ and  $g(x) :=-0.5(2x-1.1)^2$ be two continuous functions defined on $I:=[0,1]$ and let $\Delta :=\{0, \frac{1}{3}, \frac{2}{3}, 1\}$ be a uniform partition of $I$. Further, let $q_n :=\frac{2}{\pi}\arctan(n)$, $n\in \mathbb{N}$. 

If we take $\alpha_1(x) :=0.3950/(1+\exp(-10x))$, $\alpha_2(x) :=0.3550/(1+\exp(-10x))$, and $\alpha_3(x):=0.2774/(1+\exp(-10x))$,  then the scaling function $\alpha$  satisfies the required conditions (\ref{1eq34}) in Theorem \ref{1th4.3}. 
Fig. \ref{3fig} (a) shows the  MKZ-fractal function $f^{(q_2,\alpha)}_2$ and verifies that $f^{(q_2,\alpha)}_2 \geq g$ on $I$. 

Similarly, one can vary $n$ to construct one-sided approximants $f^{(q_n,\alpha)}_n \geq g$ on $I$. But when $\alpha_1(x) :=0.4$, $\alpha_2(x) :=0.355$, and $\alpha_3(x) :=0.8$, then the scaling function $\alpha$ does not satisfy condition (\ref{1eq38}). In Fig. \ref{3fig} (b), it is shown that for this choice of $\alpha$, the MKZ-fractal function obey $f^{(q_2,\alpha)}_2\not\geq g$ on $I$.
\end{Example}
\begin{figure}[ht!]
\centering
\begin{minipage}{0.5\textwidth}
\epsfig{file=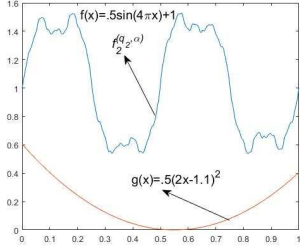,scale=0.99}\\
\centering{(a) $\alpha$  satisfies the sufficient condition (\ref{1eq38}) }
\end{minipage}\hfill
\begin{minipage}{0.5\textwidth}
\epsfig{file=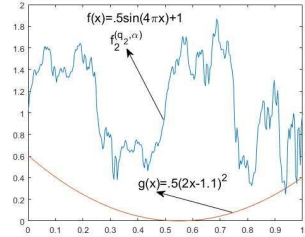,scale=0.99}\\
\centering{(b) $\alpha$ does not satisfy (\ref{1eq38}) }
\end{minipage}\hfill
\caption{One-sided approximantion by MKZ-fractal function according to Theorem \ref{1th4.3} }
\label{3fig}
\end{figure}
\begin{corollary}\label{1coro4.4.1}
Let $f,g \in C(I)$ with $f\geq g$ on $I$. Let $\Delta :=\{x_1,...,x_N\}$ be a partition of $I$ satisfying the condition $x_1<...<x_N$ and let $\{q_n\}_{n=1}^{\infty}$ be a sequence in $(0,1]$ such that $\lim\limits_{n\to \infty} q_n =1$. Then, there exist sequences  $\{f^{(q_n,\alpha)}_n\}_{n=1}^{\infty}$ and $\{g^{(q_n,\alpha)}_n\}_{n=1}^{\infty}$ of $(q,\alpha)$-fractal functions which converge to $f$ and $g$, respectively, and which satisfy $f^{(q_n,\alpha)}_n \geq g^{(q_n,\alpha)}_n$ on $I$, whenever the scaling functions $\alpha_i$ are chosen according to
\begin{equation}\label{1eq38}
    0\leq \alpha_i(x)\leq \min\left\{\frac{\phi(f-g,i)}{\Phi_n(f-g)},1 \right\},\quad  i\in \mathbb{N}_{N-1}\quad x\in I,,
\end{equation}
where $\phi(f-g,i) :=\min\limits_{x\in I}(f-g)(u_i(x))$, and $\Phi_n(f-g):=\max\limits_{x\in I}M_{n,q_n}(f-g)(x)$.
\end{corollary}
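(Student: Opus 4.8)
The plan is to reduce the comparison $f^{(q_n,\alpha)}_n\ge g^{(q_n,\alpha)}_n$ to the non-negativity statement already obtained in Theorem~\ref{1th4.3}, by exploiting the linearity of the fractal operator. First I would set $h:=f-g$, which is non-negative on $I$ since $f\ge g$. Because the MKZ operator $M_{n,q_n}$ is linear and the $(q,\alpha)$-operator $\mathcal{F}_n^{(q_n,\alpha)}$ is linear by Theorem~\ref{1th3.2}, one obtains
\[
f^{(q_n,\alpha)}_n-g^{(q_n,\alpha)}_n=\mathcal{F}_n^{(q_n,\alpha)}(f)-\mathcal{F}_n^{(q_n,\alpha)}(g)=\mathcal{F}_n^{(q_n,\alpha)}(f-g)=h^{(q_n,\alpha)}_n .
\]
The step requiring care is to confirm that the right-hand side really is the MKZ-fractal function of $h$ carried by the base $M_{n,q_n}h$: linearity of $M_{n,q_n}$ forces the base functions to combine as $M_{n,q_n}f-M_{n,q_n}g=M_{n,q_n}(f-g)$, so all three fractal functions share the same scaling vector $\alpha$, the same maps $u_i$, and mutually consistent base functions. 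This identification is the heart of the argument.

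With this in hand, I would apply Theorem~\ref{1th4.3} to the pair $(h,0)$, i.e. with $f$ replaced by $h=f-g\ge 0$ and the lower function $g$ replaced by the constant $0$. Substituting $\phi(h,i)=\phi(f-g,i)$, $\Phi_n(h)=\Phi_n(f-g)$, and $\phi(0)=0$ into the one-sided bound \eqref{1eq34} collapses it precisely to condition \eqref{1eq38}. Theorem~\ref{1th4.3} then yields $h^{(q_n,\alpha)}_n\ge 0$ on $I$ for every $n$, which by the displayed identity is exactly $f^{(q_n,\alpha)}_n\ge g^{(q_n,\alpha)}_n$.

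For the convergence assertions I would invoke Theorem~\ref{1th3.1} twice. Since $\{q_n\}\subset(0,1]$ with $q_n\to 1$ and the scaling functions are kept with $\|\alpha\|_\infty<1$, the estimate \eqref{1eq14} combined with $\|M_{n,q_n}f-f\|_\infty\to 0$ and $\|M_{n,q_n}g-g\|_\infty\to 0$ gives $f^{(q_n,\alpha)}_n\to f$ and $g^{(q_n,\alpha)}_n\to g$ uniformly on $I$.

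The main obstacle is conceptual rather than computational: making the linearity reduction airtight. One must verify that the base attached to the difference operator is genuinely $M_{n,q_n}(f-g)$ and not some unrelated function, and that choosing $\alpha$ according to \eqref{1eq38} still respects $\|\alpha\|_\infty<1$, so that all three RB operators are genuine contractions with well-defined fixed points and the estimate \eqref{1eq14} remains applicable. Everything after the reduction is a direct specialization of Theorem~\ref{1th4.3}.
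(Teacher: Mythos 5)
Your proposal is correct and follows essentially the same route as the paper, whose entire proof reads: ``The corollary follows immediately from Theorem \ref{1th4.3} by choosing $f$ as $f-g$ and $g=0$.'' Your write-up simply makes explicit the two steps the paper leaves implicit --- the linearity of $\mathcal{F}_n^{(q_n,\alpha)}$ (Theorem \ref{1th3.2}) that identifies $(f-g)^{(q_n,\alpha)}_n$ with $f^{(q_n,\alpha)}_n-g^{(q_n,\alpha)}_n$, and the verification that \eqref{1eq34} specializes to \eqref{1eq38} when the lower function is $0$.
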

\begin{proof}
The corollary follows immediately from Theorem \ref{1th4.3} by choosing $f$ as $f-g$ and $g=0$.
\end{proof}

\begin{Example}
In this example, we illustrate Corollary \ref{1coro4.4.1}. To this end, let $f(x) :=\sin(\pi x)$ and $g(x) :=-(2x-1)^2$, $x\in I:=[0,1]$. Let $\Delta:=\{0, \frac{1}{3}, \frac{2}{3}, 1\}$ and choose $\alpha_1(x):=0.6/(1+\exp(-8x))$, $\alpha_2(x):=0.6/(1+\exp(-7x))$,  and $\alpha_3(x):=0.6/(1+x^2)$. Further, let $q_n=\frac{2}{\pi}\arctan(n)$, $n\in \mathbb{N}$. Then $f$ and $g$, the  scaling functions $\alpha_i$, and $\{q_n\}_{n=1}^{\infty}$ satisfy the required conditions in Corollary \ref{1coro4.4.1}. (See, also Figure \ref{4fig} (a).)
Figure \ref{4fig} (b) depicts the quantum MKZ-fractal functions $f^{(q_2,\alpha)}_2 \geq g^{(q_2,\alpha)}_2$ on $I$. 
\end{Example}
\begin{figure}[ht!]
\centering
\begin{minipage}{0.5\textwidth}
\epsfig{file=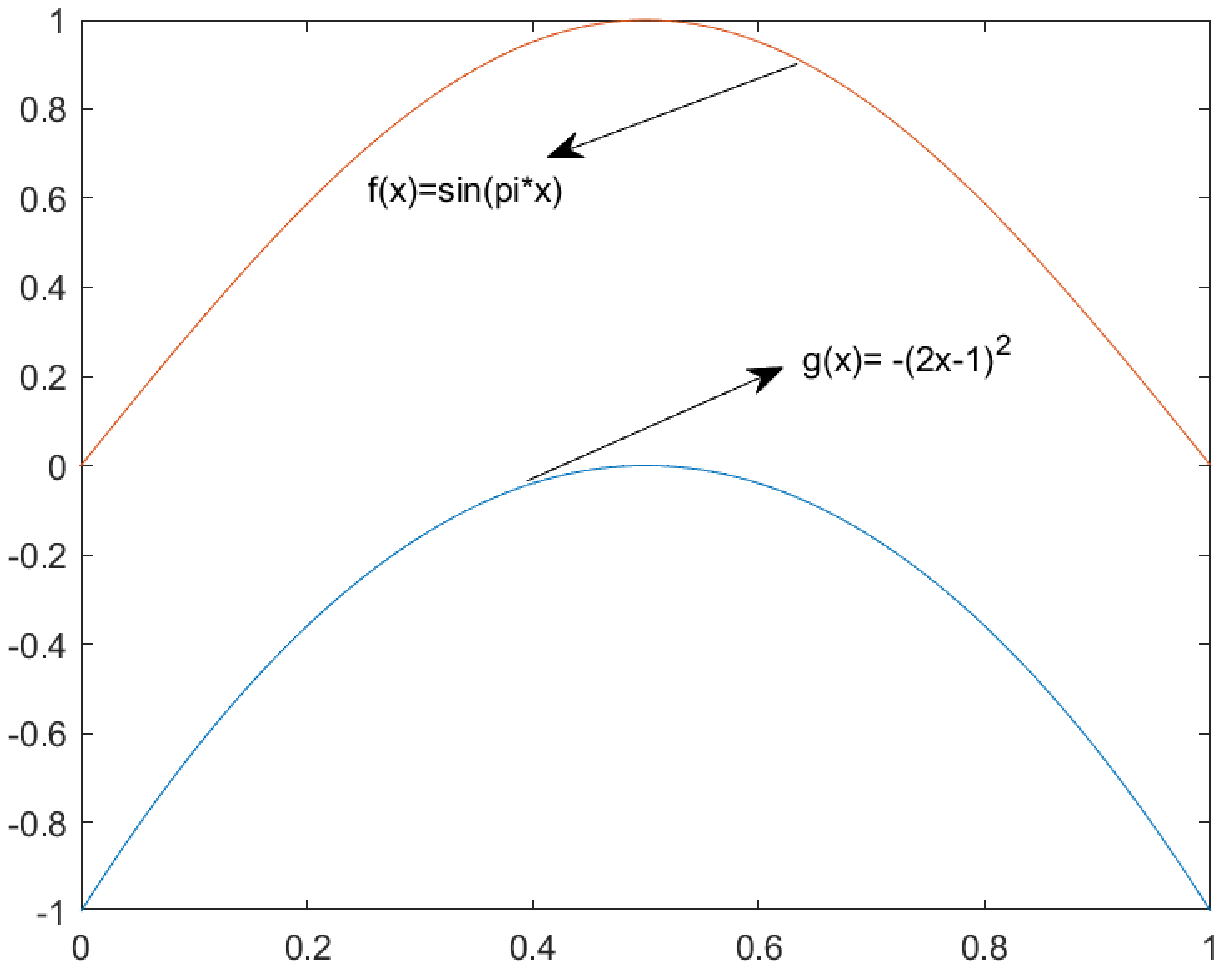,scale=0.4}\\
\centering{(a) $ f > g$ }
\end{minipage}\hfill
\begin{minipage}{0.5\textwidth}
\epsfig{file=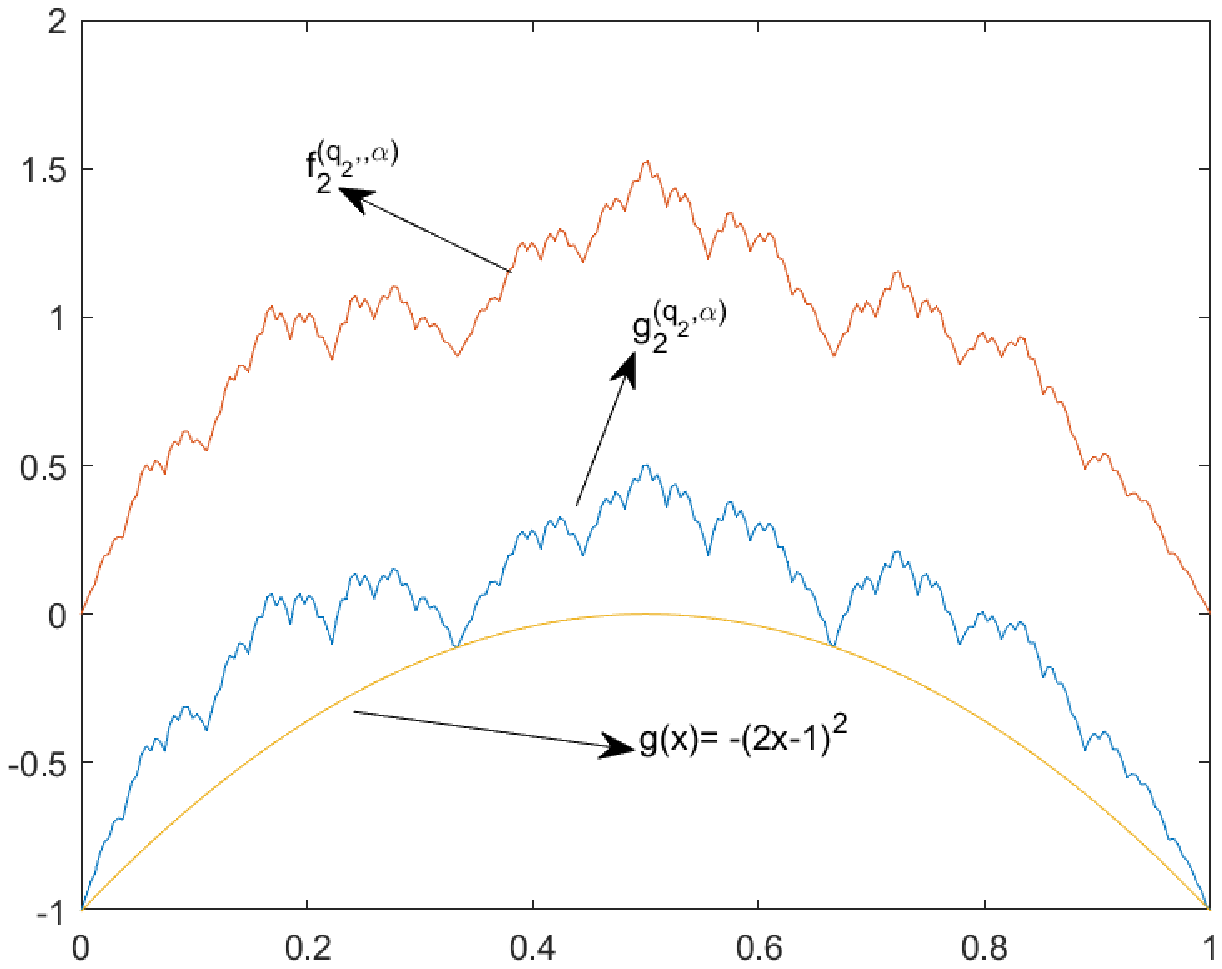,scale=0.4}\\
\centering{(b) $ f_2^{(q_2, \alpha)} > g_2^{(q_2, \alpha)}$ }
\end{minipage}\hfill
\caption{MKZ-fractal functions preserving positivity.}
\label{4fig}
\end{figure}

\begin{theorem}\label{monoth1}
Let $f\in C(I)$ be convex and $\alpha_i(x)\geq 0$ for all $x\in I,$ $i\in \mathbb{N}_N.$  Then $f_n^{(q,\alpha)}(x)\leq f(x)$ for all $x\in I$, and the sequence $\{f_n^{(q,\alpha)}(x)\}_{n=1}^{\infty}$ is non-increasing for each $x\in I$.
\end{theorem}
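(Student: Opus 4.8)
The plan is to deduce both assertions from properties of the base functions $M_{n,q}f$ together with a comparison principle for the fractalization, the point being that the hypothesis $\alpha_i\ge 0$ renders the Read--Bajraktarevi\'{c} operator \eqref{1eq12} monotone for the pointwise order. The first ingredient I would record is that $M_{n,q}f\ge f$ on $I$ for convex $f$: since $M_{n,q}$ is a positive linear operator reproducing affine functions (it fixes $e_0$, as noted after Theorem~\ref{1th3.2}, and $e_1$ by \cite{Trif}), it has the form $M_{n,q}f(x)=\sum_{k\ge 0}p_{n,k}(x)\,f(t_{n,k})$ with $p_{n,k}(x)\ge 0$, $\sum_k p_{n,k}(x)=1$ and $\sum_k p_{n,k}(x)\,t_{n,k}=x$, so Jensen's inequality for the convex $f$ gives $M_{n,q}f(x)\ge f(x)$.

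For $f_n^{(q,\alpha)}\le f$ I would use an invariant-set argument for the operator $T$ of \eqref{1eq12}. The set $\mathcal{K}:=\{g\in C_f(I):g\le f\}$ is closed in $(C_f(I),\rho)$, and if $g\in\mathcal{K}$ then on each $u_i(I)$ we have $Tg=f+(\alpha_i\circ u_i^{-1})(g-M_{n,q}f)\circ u_i^{-1}$; because $\alpha_i\ge 0$ and $g\le f\le M_{n,q}f$, the correction is nonpositive, whence $Tg\le f$ and $T\mathcal{K}\subseteq\mathcal{K}$. Since $T$ is a contraction by \eqref{1eq5} and $\mathcal{K}$ is closed, its unique fixed point $f_n^{(q,\alpha)}$ lies in $\mathcal{K}$.

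For the monotonicity in $n$ I would set $D_n:=f_{n+1}^{(q,\alpha)}-f_n^{(q,\alpha)}$, which vanishes at $x_1$ and $x_N$, and subtract the self-referential equations \eqref{1eq17} for consecutive indices. On each $u_i(I)$, writing $y=u_i^{-1}(x)$, this yields $D_n(x)=\alpha_i(y)[\,D_n(y)-(M_{n+1,q}f-M_{n,q}f)(y)\,]$, so $D_n$ is the fixed point of an affine contraction whose only inhomogeneous term is $E_n:=M_{n+1,q}f-M_{n,q}f$. The same invariant-set reasoning shows that the closed set $\{h\le 0\}$ is invariant exactly when $E_n\ge 0$, which then forces $D_n\le 0$, i.e.\ the claimed non-increasing behaviour. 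The entire statement thus reduces to one monotonicity property of the base sequence $\{M_{n,q}f\}_n$ on the cone of convex functions.

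Establishing that monotonicity --- the $q$-MKZ counterpart of the classical comparison of $M_nf$ with $M_{n+1}f$ for convex $f$ --- is the step I expect to be the main obstacle, and the one requiring the most care with signs. One must compare the negative-binomial-type weights and the nodes $t_{n,k}=x_1+(x_N-x_1)[k]_q/[k+n]_q$ at two consecutive values of $n$, for example by expressing $E_n$ through divided second differences of $f$ on the node grid and checking that the resulting kernel has a fixed sign; it is precisely here that the direction of the inequality, and hence whether $\{f_n^{(q,\alpha)}\}_n$ is non-increasing or non-decreasing, is decided. Once the sign of $E_n$ is fixed, the contraction and invariant-set bookkeeping above finishes the proof.
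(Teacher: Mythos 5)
Your first two steps are sound and are essentially the paper's own argument made precise. The Jensen inequality $M_{n,q}f\geq f$ for convex $f$ (positive operator fixing $e_0$ and $e_1$) is exactly what is needed; note that the paper misquotes this from \cite{Trif} as ``$f-M_{n}f\geq 0$'', but its own display \eqref{monotheq1} is only valid with the sign you derived, so your version is the correct one. Likewise, your invariant-set formulation ($\mathcal{K}=\{g\in C_f(I):g\leq f\}$ is closed and $T$-invariant because $\alpha_i\geq 0$ and $g\leq f\leq M_{n,q}f$) is a clean rigorization of the paper's ``iterate from the nodes'' argument for $f_n^{(q,\alpha)}\leq f$.

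The genuine gap is the monotonicity step, and it is not a harmless one. You correctly reduce $f_{n+1}^{(q,\alpha)}\leq f_{n}^{(q,\alpha)}$ to the sign condition $E_n:=M_{n+1,q}f-M_{n,q}f\geq 0$ and then leave that sign undetermined. In fact it has the \emph{opposite} sign: Trif's Theorem 3.3 --- precisely the result the paper invokes at this point --- states that for convex $f$ the $q$-MKZ sequence is non-increasing in $n$, i.e.\ $E_n\leq 0$; this agrees with the classical picture $f\leq M_{n+1}f\leq M_n f$, with the $q$-Bernstein analogue of Oru\c{c}--Phillips, and with a direct computation for $f=e_2$. Feeding $E_n\leq 0$ into your own (correct) recursion $D_n(u_i(x))=\alpha_i(x)\bigl(D_n(x)-E_n(x)\bigr)$ makes the closed set $\{h\geq 0\}$ invariant, so $D_n\geq 0$: the sequence $\{f_n^{(q,\alpha)}(x)\}_{n=1}^{\infty}$ is non-\emph{de}creasing, not non-increasing. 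The paper reaches the stated direction only through a sign error in subtracting the two self-referential equations: in \eqref{monoeq2} the inhomogeneous term should be $-\alpha_i(x)(M_{n+1}f-M_nf)(x)\geq 0$, not $+\alpha_i(x)(M_{n+1}f-M_nf)(x)\leq 0$. So your reduction is right, but the lemma you hoped to prove is false; carrying your program out honestly yields $f_n^{(q,\alpha)}\leq f_{n+1}^{(q,\alpha)}\leq f$, i.e.\ the theorem's second assertion should read ``non-decreasing'', and as a proof of the statement as written your proposal cannot be completed.
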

\begin{proof}
Using the functional equation \eqref{1eq17} of $f_n^{(q,\alpha)}$ and keeping $f(x)- M_{n}f(x)\geq0$ [Theorem 3.2, \cite{Trif}] in mind,
we conclude that for $x\in I$, $i\in \mathbb{N}_N$
\begin{align}\label{monotheq1}
 (f_n^{(q,\alpha)}- f)(u_i(x)) & = \alpha_i(x) (f^{\alpha}_n(x)-f(x)) +\alpha_i(f(x)- M_{n}f(x))\nonumber \\
 & \leq \alpha_i(x) (f^{\alpha}_n(x)-f(x)).
\end{align}
By means of \eqref{monotheq1}, we conclude that $f^{\alpha}_n(x)-f(x)\leq 0$ ensuring that $(f_n^{(q,\alpha)}- f)(u_i(x))\leq 0$. As the fractal function $f^{(q,\alpha)}_n$ is  constructed iteratively, we obtain $f_n^{(q,\alpha)}(x)\leq f(x)$ for all $x\in I$.\\ 
 From \eqref{1eq17}, the functional equations of $f_{n+1}^{(q,\alpha)}$ and $f_n^{(q,\alpha)}$, respectively, are
\begin{align*}
    f^{(q,\alpha)}_{n+1}( u_i(x)) & = f(u_i(x))+ \alpha _i(x)(f^{(q,\alpha)}_{n+1}(x) - M_{n+1,q}f(x)), \;\; x \in I ,\\
    f^{(q,\alpha)}_n( u_i(x)) & = f(u_i(x))+ \alpha _i(x)(f^{(q,\alpha)}_n(x) - M_{n,q}f(x)), \;\; x \in I. 
\end{align*}
Note that both $f_{n+1}^{(q,\alpha)}$ and $f_n^{(q,\alpha)}$ join up at the interpolation data points and that subsequent values are generated iteratively from the same data. Taking their difference and using the fact that $M_{n+1}f(x)- M_{n}f(x)\leq 0$ for all $x\in I$ from [Theorem 3.3, \cite{Trif}], we obtain that, for all $x\in I$, and $i\in \mathbb{N}_N$,
\begin{align}\label{monoeq2}
 (f_{n+1}^{(q,\alpha)}-f_{n}^{(q,\alpha)})(u_i(x)) & \leq \alpha_i(x)(f_{n+1}^{(q,\alpha)}-f_{n}^{(q,\alpha)})(x) +  \alpha_i(x)(M_{n+1}f- M_{n}f)(x)\nonumber\\
 &\leq \alpha_i(x)(f_{n+1}^{(q,\alpha)}-f_{n}^{(q,\alpha)})(x).
\end{align}
As the right hand side of \eqref{monoeq2} is zero at the first iteration, it is ensured that  $f_{n+1}^{(q,\alpha)}(x)\leq f_{n}^{(q,\alpha)}(x)$ for all $x\in I$.
\end{proof}
\begin{remark}
Using the hypotheses of Theorem \ref{1th3.1} and Theorem \ref{monoth1}, we can construct a non-increasing sequence of positive quantum MKZ-fractal functions converging to $f\in C(I)$  provided $f$ is a convex and non-negative.
\end{remark}
\section{Approximation with quantum MKZ-fractal M\"untz polynomials}\label{1sec5}
 Let $\Lambda := \{\lambda_i\}_{i=1}^{\infty}$, with $\lambda_i\neq \lambda_j$ if $i\neq j$, $\lambda_i >0$, and $\lambda_0:=0$. The set of real-valued monomials
 \begin{center}
    $\Lambda_m = \{ x^{\lambda_0},x^{\lambda_1},...,x^{\lambda_m}\}$  
 \end{center}
is called a finite  M\"untz system.
The linear space
\[
M_m(\Lambda) := \text{span}(\Lambda_m)=\text{span}\{ x^{\lambda_0},x^{\lambda_1},...,x^{\lambda_m}\}
\]
is known as a (finite) M\"untz space and 
\[
M(\Lambda):=\bigcup_{m=0}^{\infty}M_m(\Lambda)
\]
is referred to as the M\"untz system corresponding to $\Lambda$.
\begin{definition}
Let $I:=[0,1]$ and let $\Delta := \{x_1,....,x_N \}$ be a partition of $I$ satisfying  $0=x_1 <..... < x_N=1$. Suppose $\alpha :=(\alpha_1,.....,\alpha_{N-1})$ where $\alpha_i$ is a bounded function on $I$ with $\| \alpha_i\|_{\infty}<1$.

A quantum MKZ-fractal M\"untz polynomial is finite linear combination of functions $(x^{\lambda_i})^{(q,\alpha)}_{n}$, $\lambda_i\in \Lambda$, $i\in \mathbb{N}$, where 
\[
(x^{\lambda_i})^{(q,\alpha)}_{n} := \mathcal{F} ^{(q,\alpha)}_{\Delta,M_{n,q}}(x^{\lambda_i})=\mathcal{F}_n^{(q,\alpha)}(x^{\lambda_i}).
\]
 is a quantum MKZ-fractal M\"untz monomial.
\end{definition}
For $x^{\lambda_j}\in  C[0,1]$,  we have that 
\begin{align*}\label{1eq43}
\mathcal{F}_n^{(q,\alpha)}(x^{\lambda_j}) &= x^{\lambda_j}+\sum_{i=1}^{N-1}\alpha _i (u_i^{-1}(x))(\mathcal{F}_n^{(q,\alpha)}(u_i^{-1}(x))^{\lambda_j}) - \\ & M_{n,q}(u_i^{-1}(x))^{\lambda_j})\chi_{u_i(I)}(x), \quad x\in I_i, \;i\in \mathbb{N}_{N-1},n\in \mathbb{N}_{N-1}.
 \end{align*} 
 Let $\tilde{K} :=\{ (x^{\lambda_i})^{(q,\alpha)}_{n}: i,n\in \mathbb{N}\}$. Then the set $M(\Lambda)=\text{span}(\tilde{K})$ is termed the quantum MKZ-fractal M\"untz space associated with $\Lambda$.
 
\begin{theorem}(Fractal version of the full M\"untz theorem in $C[0,1])$\label{1th5.1}
Let  $\Delta :=\{x_1,..., x_N\}$ be a partition of $I = [0, 1]$ satisfying $0 = x_1 < ..< x_N = 1,$ and let $\{q_n\}_{n=1}^{\infty}$ be a sequence in (0,1] such that $\lim\limits_{n\to \infty} q_n =1$. Let
$\alpha :=(\alpha_1,.....,\alpha_{N-1})$ where $\alpha_i$ is a bounded function on $I$ with $\| \alpha_i\|_{\infty}<1$, $i\in \mathbb{N}_{N-1}$. Further, let $\Lambda=\{{\lambda_i}\}_{i=1}^{\infty}$ be a sequence of distinct positive real numbers. Then, the set 
\[
S :=\bigcup _{n=1}^{\infty} \bigcup _{m=1}^{\infty} \Span \{1,(x^{\lambda_1})^{(q_n,\alpha)}_{n},\ldots, (x^{\lambda_m})^{(q_n,\alpha)}_{n} \}
\]
 is dense in $C[0, 1]$ with respect to the sup norm  provided that
\begin{equation}\label{1eq44}
    \sum_{i=1}^{\infty}\frac{\lambda_i}{\lambda_i^2+1}=\infty.
\end{equation}
\end{theorem}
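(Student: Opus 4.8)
The plan is to reduce the statement to two facts that are already at our disposal: the classical full Müntz theorem in $C[0,1]$ and the uniform convergence of quantum MKZ-fractal functions established in Theorem \ref{1th3.1}. The classical full Müntz theorem asserts that, under the divergence hypothesis \eqref{1eq44}, the ordinary Müntz space $\Span\{1, x^{\lambda_1}, x^{\lambda_2}, \ldots\}$ is dense in $(C[0,1],\|\cdot\|_\infty)$; this is the only genuinely nontrivial external input and it is invoked precisely because \eqref{1eq44} is the sharp divergence condition appearing in the hypothesis. Theorem \ref{1th3.1} supplies the second ingredient: applied to the function $f=x^{\lambda_j}\in C[0,1]$, it guarantees that the fractalized monomial $(x^{\lambda_j})_n^{(q_n,\alpha)}$ converges uniformly to $x^{\lambda_j}$ as $n\to\infty$. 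I would then glue these two ingredients together with a standard $\varepsilon/2$ triangle-inequality argument, exploiting that each approximation involves only \emph{finitely many} monomials.

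Concretely, fix $g\in C[0,1]$ and $\varepsilon>0$. First I would invoke the classical full Müntz theorem to obtain an integer $m$ and coefficients $c_0,c_1,\ldots,c_m\in\R$ such that the ordinary Müntz polynomial $p:=c_0+\sum_{j=1}^m c_j\,x^{\lambda_j}$ satisfies $\|g-p\|_\infty<\varepsilon/2$. Next I would fractalize $p$ by replacing each monomial $x^{\lambda_j}$ with $(x^{\lambda_j})_n^{(q_n,\alpha)}$ while leaving the constant term unaltered, setting $\tilde p_n:=c_0+\sum_{j=1}^m c_j\,(x^{\lambda_j})_n^{(q_n,\alpha)}$, which by construction lies in $S$. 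Applying the estimate \eqref{1eq21} of Theorem \ref{1th3.1} to each $f=x^{\lambda_j}$ gives
\[
\bigl\|(x^{\lambda_j})_n^{(q_n,\alpha)}-x^{\lambda_j}\bigr\|_\infty\le \tfrac{5}{2}\,\omega\!\left(x^{\lambda_j},\tfrac{1}{\sqrt{[n]_{q_n}}}\right)\frac{\|\alpha\|_\infty}{1-\|\alpha\|_\infty},\qquad n\ge 3.
\]
Since $\alpha$ is fixed, the factor $\|\alpha\|_\infty/(1-\|\alpha\|_\infty)$ is a constant independent of $n$, and because $q_n\to 1$ forces $[n]_{q_n}\to\infty$ while each $x^{\lambda_j}$ is uniformly continuous on $[0,1]$, the right-hand side tends to $0$ as $n\to\infty$ (equivalently, one may simply cite \eqref{1eq20}). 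As only finitely many indices $j\in\{1,\ldots,m\}$ occur, I can choose a single $n$ large enough that $\sum_{j=1}^m|c_j|\,\|(x^{\lambda_j})_n^{(q_n,\alpha)}-x^{\lambda_j}\|_\infty<\varepsilon/2$, whence $\|p-\tilde p_n\|_\infty<\varepsilon/2$ and therefore $\|g-\tilde p_n\|_\infty<\varepsilon$. This exhibits an element of $S$ within $\varepsilon$ of $g$, which establishes density.

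The argument is essentially routine once the two ingredients are in place, and the main points requiring care are organizational rather than analytic. First, a single common index $n$ must serve all $m$ fractalized monomials simultaneously; this is handled by the finiteness of the sum together with the uniformity already present in Theorem \ref{1th3.1}. Second, the constant function is retained in its unfractalized form in the definition of $S$, so that no separate convergence statement for the fractalization of $1$ is needed and the constant term $c_0$ of $p$ transfers directly into $\tilde p_n$. Beyond these bookkeeping issues, the linearity of $\mathcal{F}_n^{(q_n,\alpha)}$ (Theorem \ref{1th3.2}) confirms that $\tilde p_n$ is indeed a fractal Müntz polynomial, and the whole proof rests on combining the classical density result with the convergence rate of Theorem \ref{1th3.1}.
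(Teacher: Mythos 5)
Your proof is correct and follows essentially the same route as the paper: approximate $f$ by a classical M\"untz polynomial $p$ via the full M\"untz theorem, then use the convergence of quantum MKZ-fractal functions (Theorem \ref{1th3.1}) and a triangle inequality to replace $p$ by an element of $S$. The only cosmetic difference is that you apply the estimate \eqref{1eq21} to each monomial $x^{\lambda_j}$ separately and sum, whereas the paper applies Theorem \ref{1th3.1} once to the whole polynomial $p$ and invokes linearity of $\mathcal{F}_n^{(q_n,\alpha)}$ (Theorem \ref{1th3.2}) to identify $p_n^{(q_n,\alpha)}$ as a fractal M\"untz polynomial; both versions are valid and amount to the same argument.
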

\begin{proof}
Let $\epsilon>0$ and $f\in C[0,1]$ be given. Then, by the classical M\"untz theorem \cite{Borwein_Erdelyi}, we know that $\Span \bigcup\limits_{m=0}^\infty\{1,x^{\lambda_1},\ldots, x^{\lambda_m}\}$ is dense in $C[0,1]$ with respect to sup norm  if $ \sum\limits_{i=1}^{\infty}\frac{\lambda_i}{\lambda_i^2+1}$, where $\{\lambda_i\}_{i=1}^{\infty}$ is a sequence distinct positive real numbers. Hence, there exists a Müntz polynomial $p(x)=\sum\limits_{s=0}^{l}a_sx^{\lambda_s},a_s\in\mathbb{R}$, such that 
\begin{equation}\label{1eq45}
    \|f-p\|_{\infty}<\frac{\epsilon}{2}.
\end{equation}
Since $p$ is continuous,   $\|\mathcal{F}_n^{(q_n,\alpha)}(p)-p\|_{\infty} \to 0$ as $n \to \infty$  by Theorem \ref{1th3.1}. Therefore, there exists a natural number $n_1$ such that 
\begin{equation}\label{1eq46}
    \|p_n^{(q_n,\alpha)}-p\|_{\infty}<\frac{\epsilon }{2},\quad \forall n\geq n_1,
\end{equation}
where $\mathcal{F}_n^{(q,\alpha)}(p)=p_n^{(q,\alpha)}=\sum\limits_{s=0}^{l}a_s(x^{\lambda_s})_{n}^{(q,\alpha)}$.
Now, by (\ref{1eq45}) and (\ref{1eq46}),
\begin{equation*}\label{1eq47}
        \|p_n^{(q_n,\alpha)}-f\|_{\infty} \leq \|p_n^{(q_n,\alpha)}-f\|_{\infty}+\|f-p\|_{\infty}
        <\frac{\epsilon}{2}+\frac{\epsilon}{2}=\epsilon, \quad \forall n\geq n_1.
\end{equation*}
Hence, there exists a sequence of quantum MKZ-fractal M\"untz polynomials converging to $f$ in sup norm and  thus $ S $
 is dense in $C[0,1]$.
\end{proof}
Using arguments similar to those in the proof of Theorem \ref{1th5.1}, we can prove Theorem \ref{1th5.2} using the classical M\"untz second theorem, see for instance  \cite{Borwein_Erdelyi,Cheney}.
\begin{theorem}(Fractal version of M\"untz second theorem in $C[0,1])$\label{1th5.2}
Let  $\Delta :=\{x_1,..., x_N\}$ be partition of $I = [0, 1]$ satisfying $0 = x_1 <....< x_N = 1$ and let $\{q_n\}_{n=1}^{\infty}$ be a sequence in $(0,1]$ such that $\lim\limits_{n\to \infty} q_n =1$. Let
$\alpha :=(\alpha_1,.....,\alpha_{N-1})$ be a scaling function where $\alpha_i$ is a bounded function on $I$ with $\| \alpha_i\|_{\infty}<1$, $i\in \mathbb{N}_{N-1}$ . Let $\{{\lambda_i}\}_{i=1}^{\infty}$ be a sequence of distinct positive real numbers such that $ \inf\limits_{i\geq 1}\lambda_i >0 $. Then,  $ S $
 is dense in $C[0, 1]$ with respect to the sup norm  provided that 
 \[
 \sum\limits_{i=1}^{\infty}\frac{1}{\lambda_i}=\infty.
 \]
\end{theorem}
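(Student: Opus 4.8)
The plan is to mirror the two-step argument used in the proof of Theorem \ref{1th5.1}, replacing the full M\"untz theorem by the classical M\"untz second theorem. The underlying mechanism is unchanged: first approximate the target $f$ by an ordinary M\"untz polynomial, then approximate that polynomial by its quantum MKZ-fractal counterpart, and finally combine the two estimates via the triangle inequality.

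First I would fix $\epsilon>0$ and $f\in C[0,1]$. Since $\{\lambda_i\}_{i=1}^{\infty}$ is a sequence of distinct positive reals with $\inf_{i\geq 1}\lambda_i>0$ and $\sum_{i=1}^{\infty}\frac{1}{\lambda_i}=\infty$, the classical M\"untz second theorem (see, e.g., \cite{Borwein_Erdelyi,Cheney}) guarantees that $\Span\bigcup_{m=0}^{\infty}\{1,x^{\lambda_1},\ldots,x^{\lambda_m}\}$ is dense in $C[0,1]$ with respect to the sup norm. Hence there is a M\"untz polynomial $p(x)=\sum_{s=0}^{l}a_s x^{\lambda_s}$, $a_s\in\mathbb{R}$, with $\|f-p\|_{\infty}<\frac{\epsilon}{2}$.

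Next, because $p$ is continuous, Theorem \ref{1th3.1} applies to $p$ and yields $\|\mathcal{F}_n^{(q_n,\alpha)}(p)-p\|_{\infty}\to 0$ as $n\to\infty$; here the linearity of $\mathcal{F}_n^{(q_n,\alpha)}$ (Theorem \ref{1th3.2}) ensures that $p_n^{(q_n,\alpha)}:=\mathcal{F}_n^{(q_n,\alpha)}(p)=\sum_{s=0}^{l}a_s\,(x^{\lambda_s})_n^{(q_n,\alpha)}$ is itself a quantum MKZ-fractal M\"untz polynomial, i.e.\ an element of $S$. Choosing $n_1$ so that $\|p_n^{(q_n,\alpha)}-p\|_{\infty}<\frac{\epsilon}{2}$ for all $n\geq n_1$, and combining with the previous estimate through
\[
\|p_n^{(q_n,\alpha)}-f\|_{\infty}\leq \|p_n^{(q_n,\alpha)}-p\|_{\infty}+\|p-f\|_{\infty}<\epsilon,\qquad n\geq n_1,
\]
shows that $f$ lies in the sup-norm closure of $S$. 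Since $f$ and $\epsilon$ were arbitrary, $S$ is dense in $C[0,1]$.

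The argument contains no genuine obstacle beyond the bookkeeping already present in Theorem \ref{1th5.1}; the only point that requires care is the hypothesis $\inf_{i\geq 1}\lambda_i>0$, which is precisely the extra condition needed to invoke the second M\"untz theorem (with the summability criterion $\sum 1/\lambda_i=\infty$) in place of the full M\"untz theorem (with $\sum \lambda_i/(\lambda_i^2+1)=\infty$). I would therefore simply verify that this hypothesis is the only structural change and that every remaining step transfers verbatim from the proof of Theorem \ref{1th5.1}.
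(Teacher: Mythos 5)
Your proposal is correct and follows essentially the same route as the paper's own proof: approximate $f$ by a classical M\"untz polynomial $p$ via the second M\"untz theorem, then approximate $p$ by $p_n^{(q_n,\alpha)}=\mathcal{F}_n^{(q_n,\alpha)}(p)$ using Theorem \ref{1th3.1} (with linearity guaranteeing $p_n^{(q_n,\alpha)}\in S$), and combine with the triangle inequality. Your explicit remark on linearity and on the role of $\inf_{i\geq 1}\lambda_i>0$ only makes the paper's terser argument more transparent.
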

\begin{proof}
Let $f\in C[0,1]$ and $\epsilon >0$. Then, the classical M\"untz  theorem \cite{Borwein_Erdelyi} ensures the existence of M\"untz polynomial $p(x)$ such that $\|p-f\|_{\infty}<\cfrac{\epsilon}{2}$, and from \eqref{1eq46}, there exists a natural number $n_1$ such that $\|p_n^{(q_n,\alpha)}-p\|_{\infty}<\frac{\epsilon }{2},\quad \forall n\geq n_1$. These two inequalities imply $\|p_n^{(q_n,\alpha)}-f\|_{\infty}<\epsilon,\quad \forall n\geq n_1$. Hence $S$ is dense in $C[0, 1]$ with respect to the sup norm.
\end{proof}
\begin{theorem}\label{1th5.3}
Let $\Delta:=\{x_1,..., x_N\}$ where $0 = x_1 < .... < x_N = 1$ be a partition of $I = [0,1]$ and let $\{q_n\}_{n=1}^{\infty}$ be a sequence in $(0,1]$ such that $\lim\limits_{n\to \infty} q_n =1$. Let $\alpha :=(\alpha_1,.....,\alpha_{N-1})$ be a scaling function where each $\alpha_i$ is a bounded function on $I$ with $\| \alpha_i\|_{\infty}<1$, $i\in \mathbb{N}_{N-1}$. If $S :=\{f_s: s\in \mathbb{N}\}$ is dense in  $C[0,1]$, then 
\[
\bigcup_{n=1}^{\infty} \Span \{ \mathcal{F}_n^{(q_n,\alpha)}(f_s): s\in \mathbb{N}\}
\]
is also dense in $C[0,1]$.
\end{theorem}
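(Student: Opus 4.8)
The plan is to mirror the argument used for the fractal Müntz theorems (Theorem \ref{1th5.1} and Theorem \ref{1th5.2}), replacing the Müntz polynomial by an arbitrary element supplied by the density of $S$. The two workhorses are the uniform convergence $\mathcal{F}_n^{(q_n,\alpha)}(g) \to g$ furnished by Theorem \ref{1th3.1} and the linearity of $\mathcal{F}_n^{(q_n,\alpha)}$ established in Theorem \ref{1th3.2}; a double application of the triangle inequality then closes the estimate.

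Concretely, I would fix $f\in C[0,1]$ and $\epsilon>0$. First, using the density of $S$ in $C[0,1]$, I would select an element $g\in\Span(S)$ with $\|f-g\|_\infty<\epsilon/2$; writing $g=\sum_{s}c_s f_s$ as a finite linear combination records its membership in $\Span\{f_s:s\in\mathbb{N}\}$. Next, since $g\in C[0,1]$, Theorem \ref{1th3.1} guarantees that the quantum MKZ-fractal functions $\mathcal{F}_n^{(q_n,\alpha)}(g)$ converge uniformly to $g$, so there is an index $n_1$ with $\|\mathcal{F}_{n}^{(q_n,\alpha)}(g)-g\|_\infty<\epsilon/2$ for every $n\geq n_1$.

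The decisive observation is that $\mathcal{F}_n^{(q_n,\alpha)}(g)$ actually lies in the target set: by the linearity of $\mathcal{F}_n^{(q_n,\alpha)}$ (Theorem \ref{1th3.2}) we have $\mathcal{F}_n^{(q_n,\alpha)}(g)=\sum_{s}c_s\,\mathcal{F}_n^{(q_n,\alpha)}(f_s)\in\Span\{\mathcal{F}_n^{(q_n,\alpha)}(f_s):s\in\mathbb{N}\}\subseteq\bigcup_{n=1}^{\infty}\Span\{\mathcal{F}_n^{(q_n,\alpha)}(f_s):s\in\mathbb{N}\}$. Combining the two estimates via the triangle inequality yields, for $n\geq n_1$,
\[
\|\mathcal{F}_n^{(q_n,\alpha)}(g)-f\|_\infty \leq \|\mathcal{F}_n^{(q_n,\alpha)}(g)-g\|_\infty + \|g-f\|_\infty < \tfrac{\epsilon}{2}+\tfrac{\epsilon}{2}=\epsilon,
\]
which exhibits an element of the union within $\epsilon$ of the arbitrary target $f$, proving density.

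The argument is almost entirely bookkeeping, so I do not expect a serious obstacle; the only points requiring care are that the fractalization operator commutes with the finite linear combination (which is exactly Theorem \ref{1th3.2}), so that the approximant stays inside the span, and that Theorem \ref{1th3.1} is invoked for the fixed function $g$ rather than for $f$ itself. If one prefers to read the hypothesis as asserting that the set $S$ itself (rather than its span) is dense, the linearity step becomes superfluous: one simply picks a single $f_s$ with $\|f-f_s\|_\infty<\epsilon/2$ and notes that $\mathcal{F}_n^{(q_n,\alpha)}(f_s)$ is already one of the spanning generators.
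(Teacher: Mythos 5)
Your proposal is correct and follows essentially the same route as the paper's proof: approximate $f$ by an element coming from the dense set $S$, invoke Theorem \ref{1th3.1} to make the fractalization $\mathcal{F}_n^{(q_n,\alpha)}$ of that fixed approximant uniformly close to it, and finish with the triangle inequality. In fact your write-up is slightly more careful than the paper's, which recycles the M\"untz-polynomial notation from Theorem \ref{1th5.1} verbatim and leaves the linearity/membership-in-the-span step (your use of Theorem \ref{1th3.2}) implicit.
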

\begin{proof}
Let $f \in C[0,1]$ and $\epsilon>0$ be given. By density of the set $S$ in  $ C[0,1]$, there exists a polynomial of the form $p(x)=\sum\limits_{s=0}^{l}a_sx^{\lambda_s},a_s\in\mathbb{R}$, such that
$ \|f-p\|_{\infty}<\frac{\epsilon}{2}$.  Using Theorem  \ref{1th3.1}, we get  $ \|p_n^{(q_n,\alpha)}-p\|_{\infty}<\frac{\epsilon }{2},\quad \forall n\geq n_1$. Finally, these two inequalities result in $ \|p_n^{(q_n,\alpha)}-p\|_{\infty}<\epsilon,\quad \forall n\geq n_1$. Therefore, $S$ is dense in  $C[0,1]$.
\end{proof}
\section{Approximation by  of MKZ-fractal functions}
In this section, we investigate some approximation-theoretic properties of MKZ-fractal functions and derive conditions for such functions to belong to the Lebesgue spaces $L^p$ with $p\geq 1$.
\subsection{MKZ-fractal approximation and integral MKZ-fractal functions}
If we set $q=1$ in \eqref{1eq15}, then the $q$-MKZ-series $M_{n,q}f$ becomes the classical MKZ-series $M_{n}f$,  which is also known as the MKZ-series of $f\in C(I)$. (See, for instance, \cite{Trif2,Trif}.) The MKZ-series is given by the following expression:
\begin{equation}\label{mkz}
\begin{split}
&M_{n}f(x) := P_n(x) \sum _{k=0}^{\infty} \binom{k+n}{k}(x-x_1)^k f(x_1 +(x_N-x_1) \frac{k}{k+n});\;x_1\leq x<x_N,\\
& M_{n}f(x_N):=f(x_N),
\end{split}
\end{equation}
with 
\[
P_n(x) :=\dfrac{ (x_N-x)^{n+1}} {(x_N-x_1)^{n+1}}.
\]
$M_{n}f$ satisfies the endpoint interpolation conditions
\begin{equation}
M_{n}f(x_0)=f(x_1)\quad\text{and}\quad M_{n}f(x_N)=f(x_N).
\end{equation}
The IFS with maps given by \eqref{1eq11}, where the base function is taken to be $b:=M_{n}f$, determines an $\alpha$-fractal function 
\[
f^{\alpha}_{n} := \mathcal{F} ^{\alpha}_{\Delta,b}(f),
\]
termed an MKZ $\alpha$-fractal function associated with $f \in C(I)$. Furthermore, this MKZ $\alpha$-fractal function satisfies the functional equation
  \begin{equation}\label{eq003}
 f^{\alpha}_n = f+\sum_{i=1}^{N-1}(\alpha _i\circ u_i^{-1})(f^{\alpha}_n\circ u_i^{-1})- (M_{n}f)\circ (u_i^{-1})\chi_{u_i(I)}
 \end{equation} 
on $I$.

It is straight-forward to establish the estimate 
 \begin{align}\label{eq22}
     \|f_n^{\alpha}-f\|_{\infty}\leq \frac{\| \alpha\|_{\infty}}{1-\| \alpha\|_{\infty}}\|f-M_nf\|_{\infty}.
 \end{align}
Even if $\alpha \neq {0}$, we can get the convergence of $f_n^{\alpha}$ to $f$ as $n\to \infty$ by the following corollaries.
 \begin{corollary}
Let $f\in C(I)$ and $\alpha \ne {0}$. Then  $f_n^{\alpha}$  converges  to $f$ 
as $ n \to \infty$, and
 \begin{align*}
 \|f^{\alpha}_n-f\|_{\infty} \leq  \frac{31}{27} \,\omega \left(f,\frac{1}{\sqrt{n}}\right) \frac{\|\alpha\|_{\infty}}{1-\|\alpha\|_{\infty}},\quad n\in\N,
  \end{align*}
where $\omega$ denotes the usual modulus of continuity of a function $f$.
 \end{corollary}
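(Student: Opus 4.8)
The plan is to mirror the proof of Theorem \ref{1th3.1} in the special case $q=1$, where the $q$-MKZ operator $M_{n,q}$ reduces to the classical MKZ operator $M_n$ defined in \eqref{mkz}. The natural starting point is the general fractal perturbation estimate \eqref{eq22}, namely
\[
\|f_n^{\alpha}-f\|_{\infty}\leq \frac{\|\alpha\|_{\infty}}{1-\|\alpha\|_{\infty}}\,\|f-M_nf\|_{\infty},
\]
which holds for the chosen base function $b:=M_n f$ and scaling vector $\alpha$ and conveniently isolates the entire $n$-dependence into the approximation error $\|f-M_n f\|_{\infty}$ of the classical MKZ operator. Because $\|\alpha\|_{\infty}<1$ by hypothesis, the prefactor $\tfrac{\|\alpha\|_{\infty}}{1-\|\alpha\|_{\infty}}$ is a fixed finite constant, independent of $n$.

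The key step is then to invoke a quantitative Shisha--Mond type bound for the classical MKZ operator in terms of the modulus of continuity. Since $[n]_q=n$ when $q=1$, the argument $1/\sqrt{[n]_{q}}$ appearing in Theorem \ref{1th3.1} becomes $1/\sqrt{n}$, and the known refined estimate
\[
\|M_n f - f\|_{\infty}\leq \tfrac{31}{27}\,\omega\!\left(f,\tfrac{1}{\sqrt{n}}\right),\qquad n\in\N,
\]
replaces the constant $\tfrac52$ of the general $q$ case. Substituting this into \eqref{eq22} yields the claimed inequality in a single line. Uniform convergence $f_n^{\alpha}\to f$ as $n\to\infty$ is then immediate: as $f$ is continuous on the compact interval $I$ it is uniformly continuous there, so $\omega(f,1/\sqrt{n})\to 0$; hence the right-hand side tends to $0$ regardless of whether $\alpha\neq 0$, which is precisely the point of the corollary (the convergence persists even with non-null scalings).

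The only genuinely nontrivial ingredient is the sharp estimate $\|M_n f - f\|_{\infty}\leq \tfrac{31}{27}\,\omega(f,1/\sqrt{n})$ for the classical MKZ operator. The specific constant $\tfrac{31}{27}$ is a known refinement, obtained by controlling the second central moment $M_n\big((\cdot-x)^2\big)(x)$ and feeding the resulting bound into the standard modulus-of-continuity inequality for positive linear operators reproducing constants; it is available in the MKZ literature (e.g.\ \cite{Trif}) and should simply be cited. Consequently, the main obstacle is locating and correctly applying that constant rather than any structural difficulty in the fractal construction, since the remainder of the argument is the routine substitution into \eqref{eq22} followed by the uniform-continuity observation.
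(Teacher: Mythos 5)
Your proof is correct and follows essentially the same route as the paper: the fractal perturbation bound \eqref{eq22} combined with the classical MKZ estimate $\|f-M_nf\|_{\infty}\leq \tfrac{31}{27}\,\omega\left(f,\tfrac{1}{\sqrt{n}}\right)$, followed by the observation that the right-hand side vanishes as $n\to\infty$. The only detail to correct is the attribution of that key estimate: it is Corollary 2.3 of Lupa\c{s} and M\"uller \cite{Lupas_Muller} (on the classical $M_n$-operators), not \cite{Trif}, which treats the $q$-MKZ case and yields the weaker constant $\tfrac{5}{2}$ used in Theorem \ref{1th3.1}.
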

 \begin{proof}
  Using  Corollary 2.3 in reference \cite{Lupas_Muller},
  \begin{center}
     $\|f-M_{n}f\|_{\infty} \leq \frac{31}{27}\omega (f,\frac{1}{\sqrt{n}})$.
  \end{center}
Together with \eqref{eq22}, we get the required estimate.
 \end{proof}
 
 \begin{corollary}
 Let $f\in C^{1}(I)$ and $\alpha \ne {0}$. Then the uniform error between the MKZ $\alpha$-fractal functions $f_n^{\alpha}$ and its germ $f$ is given by  
 \begin{align}\label{eq007}
 \|f^{\alpha}_n-f\|_{\infty} \leq  \frac{2(2+3\sqrt{3})}{27\sqrt{n}}\,\omega \left(f,\frac{1}{\sqrt{n}}\right) \frac{\|\alpha\|_{\infty}}{1-\|\alpha\|_{\infty}},\quad \forall\,n\in \N,
  \end{align}
  where $\omega$ denotes the modulus of continuity of $f$.
 \end{corollary}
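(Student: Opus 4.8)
The plan is to follow the template of the preceding corollary almost verbatim, replacing the degree-of-approximation estimate for $M_n$ on $C(I)$ by its sharper analogue on $C^1(I)$. Everything hinges on two ingredients that are already in place: the perturbation inequality \eqref{eq22}, which gives
\[
\|f_n^{\alpha} - f\|_\infty \le \frac{\|\alpha\|_\infty}{1 - \|\alpha\|_\infty}\,\|f - M_n f\|_\infty,
\]
and a quantitative bound for the classical MKZ operator $M_n$ acting on continuously differentiable functions.

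First I would invoke \eqref{eq22}, so that it remains only to control $\|f - M_n f\|_\infty$ under the stronger hypothesis $f \in C^1(I)$. Whereas the previous corollary relied on the $C(I)$-estimate $\|f - M_n f\|_\infty \le \tfrac{31}{27}\,\omega(f, 1/\sqrt n)$ of \cite{Lupas_Muller}, here I would instead invoke the corresponding estimate for differentiable $f$,
\[
\|f - M_n f\|_\infty \le \frac{2(2+3\sqrt3)}{27\sqrt n}\,\omega\!\left(f, \tfrac{1}{\sqrt n}\right), \qquad n \in \N,
\]
in which the extra factor $n^{-1/2}$ records the additional order of smoothness. Substituting this bound into the displayed perturbation inequality produces \eqref{eq007} at once.

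The genuinely substantive ingredient, and hence the step I expect to be the main obstacle, is the $C^1$-estimate for $M_n$ itself, together with its sharp constant $\tfrac{2(2+3\sqrt3)}{27}$. This is precisely the content borrowed from \cite{Lupas_Muller}: the constant emerges from the moment asymptotics of the MKZ operator, namely the behaviour of the central moment $M_n((\cdot-x)^2;x)$ as $n\to\infty$, combined with a first-order Taylor expansion of $f$, the value $\tfrac{2(2+3\sqrt3)}{27}$ arising from optimizing the modulus argument $\delta = 1/\sqrt n$ against the second-moment bound. Since this estimate is already established in the cited reference, no fresh analysis of $M_n$ is needed, and the proof collapses to the one-line combination above, exactly mirroring the structure of the preceding corollary.
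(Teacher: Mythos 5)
Your overall route is exactly the paper's: quote the perturbation inequality \eqref{eq22} and then insert the Lupa\c{s}--M\"uller degree-of-approximation estimate for $M_n$ on $C^1(I)$. The gap is in the lemma you quote. Corollary 2.5 of \cite{Lupas_Muller} --- the estimate the paper's proof actually invokes --- reads
\[
\|M_n f - f\|_\infty \;\le\; \frac{2(2+3\sqrt{3})}{27\sqrt{n}}\,\omega\!\left(f',\frac{1}{\sqrt{n}}\right),\qquad n\in\N,
\]
with the modulus of continuity of the \emph{derivative} $f'$, which is precisely what your own sketch of its proof (first-order Taylor expansion against the second-moment bound) would produce. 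Your version, with $\omega(f,1/\sqrt{n})$ in place of $\omega(f',1/\sqrt{n})$, is not the cited result and is in fact too strong to be true: since $\omega(f,\delta)\le\|f'\|_\infty\,\delta$ for $f\in C^1(I)$, it would force $\|M_n f - f\|_\infty = O(1/n)$ for \emph{every} $C^1$ function, an order of approximation that for MKZ-type operators characterizes a saturation class (roughly, a boundedness condition on a weighted second derivative) rather than all of $C^1(I)$. Note also that $\omega(f,\cdot)$ and $\omega(f',\cdot)$ are not comparable in either direction (for $f(x)=x$ one has $\omega(f',\delta)=0$ but $\omega(f,\delta)=\delta$), so your statement cannot be recovered from the correct one by a monotonicity argument.

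In fairness, the slip originates in the statement you were handed: the display \eqref{eq007} and the sentence following it also say ``modulus of continuity of $f$,'' whereas the paper's own proof derives the bound with $\omega\left(f',1/\sqrt{n}\right)$; the corollary should be read, and corrected, accordingly. Once you replace $\omega(f,\cdot)$ by $\omega(f',\cdot)$ in the lemma you quote, your one-line combination with \eqref{eq22} is exactly the paper's proof.
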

 \begin{proof}
 Corollary 2.5 in reference \cite{Lupas_Muller} yields 
 \[
 \|M_{n}f-f\|_{\infty} \leq \frac{2(2+3\sqrt{3})}{27\sqrt{n}}\,\omega \left(f',\frac{1}{\sqrt{n}}\right),\quad n\in\N.
 \]
Using this estimate in \eqref{eq22}, we obtain \eqref{eq007}. In this case $f_n^{\alpha}$ converges to $f$ at a faster rate than for an $f\in C(I)$.
 \end{proof}
 
 \begin{definition}
Let  $A\in \mathbb{R}$, and $0<\beta \leq 1$. Then  $\Lip_A\beta$ is defined as the set of all functions satisfying 
\[
|f(x_2)-f(x_1)|\leq A|x_2-x_1|^{\beta}, \quad\forall\, x_1,x_2 \in [x_0,x_N].
\]
Such functions are also called uniformly h\"olderian with exponent $\beta$.
\end{definition}

 \begin{corollary}
 Let $f\in C^{1}(I)$ with $f' \in \Lip_M\beta$ for $0< \beta \leq 1$. Further, let $M > \R$. Then the fractal functions $f_n^{\alpha}$  defined in \eqref{eq003} satisfies the following estimate:
 \begin{align}
 \|f^{\alpha}_n-f\|_{\infty} \leq  \frac{2(2+3\sqrt{3})}{27\sqrt{n}} n^{\frac{-(\beta+1)}{2}} \frac{\|\alpha\|_{\infty}}{1-\|\alpha\|_{\infty}},\quad \forall\, n\in\N.
  \end{align}
 \end{corollary}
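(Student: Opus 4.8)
The plan is to concatenate the three ingredients already assembled in this subsection: the universal fractal perturbation bound \eqref{eq22}, the Meyer--K\"onig--Zeller approximation estimate for $C^1$ germs taken from \cite{Lupas_Muller}, and the elementary fact that a H\"older condition on $f'$ controls its modulus of continuity by a power of the argument. First I would invoke \eqref{eq22}, which is valid for an arbitrary base function and in particular for $b = M_n f$, to write
\[
\|f_n^{\alpha}-f\|_{\infty} \le \frac{\|\alpha\|_{\infty}}{1-\|\alpha\|_{\infty}}\,\|f - M_n f\|_{\infty}.
\]
This reduces the problem to estimating the purely classical (non-fractal) quantity $\|f - M_n f\|_{\infty}$, i.e.\ the error of the MKZ operator on a smooth function.

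Since $f\in C^{1}(I)$, the next step is to apply Corollary~2.5 of \cite{Lupas_Muller}---exactly as in the preceding corollary---to obtain
\[
\|M_n f - f\|_{\infty} \le \frac{2(2+3\sqrt 3)}{27\sqrt n}\,\omega\!\left(f',\tfrac{1}{\sqrt n}\right), \quad n\in\N .
\]
The key step is then to feed the hypothesis $f'\in\Lip_M\beta$ into the modulus of continuity: by the definition of $\Lip_M\beta$ one has $\omega(f',\delta)\le M\delta^{\beta}$ for every $\delta>0$, so taking $\delta = 1/\sqrt n$ gives $\omega(f',1/\sqrt n)\le M\,n^{-\beta/2}$. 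Substituting this into the displayed estimate, merging the powers of $n$ via $\tfrac{1}{\sqrt n}\cdot n^{-\beta/2}=n^{-(\beta+1)/2}$, and reinserting the factor $\tfrac{\|\alpha\|_{\infty}}{1-\|\alpha\|_{\infty}}$ from the first step yields the asserted bound.

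I expect no genuine obstacle here; the argument is a routine chaining of inequalities, and the only point requiring care is the bookkeeping of the $n$-powers and the constant. Specifically, the $1/\sqrt n$ coming from the Lupa\c{s}--M\"uller estimate and the $n^{-\beta/2}$ coming from the H\"older bound must be combined into the single exponent $-(\beta+1)/2$, and the H\"older constant $M$ should be carried through the chain; the estimate as printed appears to retain a spurious $1/\sqrt n$ and to drop $M$, so I would reconcile the final prefactor during the write-up. Aside from this cosmetic check, the conclusion is immediate, and---just as in the previous corollary---the additional power of $n$ gained from the H\"older regularity of $f'$ produces a faster convergence rate of $f_n^{\alpha}$ to $f$ than in the merely continuous or merely $C^1$ cases.
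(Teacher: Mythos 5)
Your argument is correct and is essentially the paper's own proof: the paper simply cites Corollary 2.6 of \cite{Lupas_Muller} (which is exactly your Corollary 2.5-plus-H\"older computation packaged into a single statement) and substitutes it into \eqref{eq22}. Your bookkeeping remark is also well taken: the spurious $1/\sqrt{n}$ and the dropped H\"older constant $M$ are defects that the paper carries both in the corollary statement and in its quotation of Corollary 2.6, and the correct estimate reads $\|f_n^{\alpha}-f\|_{\infty}\leq \frac{2(2+3\sqrt{3})\,M}{27}\, n^{-(\beta+1)/2}\,\frac{\|\alpha\|_{\infty}}{1-\|\alpha\|_{\infty}}$; indeed, for $\beta=1$ the printed rate $n^{-3/2}$ would even contradict the $O(1/n)$ saturation order of positive linear operators, so your corrected exponent is the right one.
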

 \begin{proof}
  It is known by Corollary 2.6 of \cite {Lupas_Muller} that  
  $$\|M_{n}f-f\|_{\infty} \leq \frac{2(2+3\sqrt{3})}{27\sqrt{n}} n^{\frac{-(\beta+1)}{2}}.$$ 
  Employing the above estimate  in \eqref{eq22}, we obtain the required inequality. Thus, in this case, the convergence rate of $f_n^{\alpha}$ to the germ function $f$ is faster than that for $f\in C(I)$ or $f\in C^1(I)$.
\end{proof}

\begin{proposition}[Cf. \cite{Trif2}]\label{prop1}
Given a continuous function $f:[x_0,x_N]\to \mathbb{R}$, it holds that 
\[
f\in \Lip_A \beta \quad\Longleftrightarrow\quad M_nf\in \Lip_A \beta,\quad \forall\,n \in\N,
\]
where $(M_n)_{n\geq 1}$ is the sequence of MKZ operators defined in \eqref{mkz}.
\end{proposition}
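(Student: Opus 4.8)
The statement is an equivalence, and I would prove the two implications by rather different means, so I would split the argument accordingly.

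\textbf{The easy direction} ($M_nf\in\Lip_A\beta$ for all $n\ \Rightarrow\ f\in\Lip_A\beta$). Here I would exploit the uniform convergence $\|M_nf-f\|_{\infty}\to0$ already recorded above (via the Lupa\c{s}--M\"uller estimates used in the preceding corollaries), which in particular yields pointwise convergence $M_nf(x)\to f(x)$ for every $x$. Fixing $x,y\in[x_1,x_N]$ and letting $n\to\infty$ in $|M_nf(x)-M_nf(y)|\le A|x-y|^{\beta}$ gives $|f(x)-f(y)|\le A|x-y|^{\beta}$, since $\Lip_A\beta$ is closed under pointwise limits. This requires no new computation.

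\textbf{The substantive direction} ($f\in\Lip_A\beta\ \Rightarrow\ M_nf\in\Lip_A\beta$). First I would reduce to $I=[0,1]$: under the affine change of variable $\sigma(t)=x_1+(x_N-x_1)t$ the operator conjugates as $M_nf=\big(\widetilde M_n(f\circ\sigma)\big)\circ\sigma^{-1}$, where $\widetilde M_n$ is the MKZ operator on $[0,1]$, and precomposition with $\sigma$ multiplies the H\"older constants of both $f$ and $M_nf$ by the same factor $(x_N-x_1)^{\beta}$; hence it suffices to treat $[0,1]$ (with $A$ replaced by $A(x_N-x_1)^{\beta}$). On $[0,1]$ I would record the two structural facts I need: writing $M_nf(x)=\sum_{k\ge0}m_{n,k}(x)\,f(t_{n,k})$ with $m_{n,k}(x)=\binom{n+k}{k}x^{k}(1-x)^{n+1}$ and nodes $t_{n,k}=\tfrac{k}{n+k}$, one has $\sum_k m_{n,k}(x)=1$ and, crucially, $M_ne_1=e_1$ with $e_1(x)=x$ (reproduction of linear functions). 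The latter follows from the identity $\binom{n+k}{k}\tfrac{k}{n+k}=\binom{n+k-1}{k-1}$ after an index shift, together with the generating-function identity $\sum_{k\ge0}\binom{n+k}{k}x^{k}=(1-x)^{-(n+1)}$.

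Then I would pass to the probabilistic representation $M_nf(x)=\mathbb E[f(Y_x)]$, where $Y_x=\tfrac{N_x}{n+N_x}$ and $N_x$ is the negative-binomial variable with $\mathbb P(N_x=k)=m_{n,k}(x)$. The decisive point is that $\{N_x\}$ is stochastically increasing in $x$: the partial sums $\sum_{k\le m}m_{n,k}(x)$ are non-increasing in $x$ (a short differentiation shows the derivative telescopes to a single nonpositive term), so the one-dimensional monotone (quantile) coupling realizes $Y_x\le Y_y$ almost surely whenever $x\le y$. For such $x\le y$ the chain
\[
|M_nf(y)-M_nf(x)|\le\mathbb E\,|f(Y_y)-f(Y_x)|\le A\,\mathbb E\big[(Y_y-Y_x)^{\beta}\big]\le A\big(\mathbb E[Y_y-Y_x]\big)^{\beta}=A\,(y-x)^{\beta}
\]
closes the argument: the second inequality uses $f\in\Lip_A\beta$ together with $Y_y\ge Y_x$ (so the absolute value may be dropped), the third is Jensen's inequality for the concave map $t\mapsto t^{\beta}$ (valid because $0<\beta\le1$ and $Y_y-Y_x\ge0$), and the final equality is precisely $\mathbb E[Y_x]=M_ne_1(x)=x$.

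\textbf{Main obstacle.} The two nontrivial ingredients are the reproduction identity $M_ne_1=e_1$ and the stochastic monotonicity of $N_x$ in $x$; with both in hand, concavity of $t^{\beta}$ does the rest. I would regard the monotonicity/coupling step as the crux, since it is exactly what converts the ``wrong-way'' triangle inequality $\mathbb E|Y_y-Y_x|\ge|\mathbb E Y_y-\mathbb E Y_x|$ into the equality $\mathbb E[Y_y-Y_x]=y-x$ needed above. The forward implication is also available from \cite{Trif2}; the coupling argument sketched here provides a self-contained alternative.
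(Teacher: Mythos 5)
Your proof is correct, but it takes a genuinely different route from the paper, which supplies no argument at all for this proposition: the result is stated as known, with the citation ``Cf.~\cite{Trif2}'', and Trif's paper (as its title announces) gives an \emph{elementary}, purely combinatorial proof of the preservation of Lipschitz constants. Your converse direction (a pointwise limit of functions in $\Lip_A\beta$ remains in $\Lip_A\beta$, applied to the uniform convergence $M_nf\to f$) is the standard observation and is fine. For the substantive direction you reconstruct, in effect, the probabilistic argument to which Trif's elementary proof was designed as an alternative: the negative-binomial representation $M_nf(x)=\mathbb{E}\,f\bigl(N_x/(n+N_x)\bigr)$, stochastic monotonicity of $x\mapsto N_x$, the monotone (quantile) coupling, Jensen's inequality for the concave map $t\mapsto t^{\beta}$, and the reproduction identity $M_ne_1=e_1$. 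Every ingredient checks out: your binomial identity proving $M_ne_1=e_1$ is correct, and so is the telescoping claim, since $\frac{d}{dx}\sum_{k\le m}m_{n,k}(x)=-(n+1)\binom{n+m+1}{m}x^{m}(1-x)^{n}\le 0$; the affine reduction to $[0,1]$ is also legitimate (and implicitly corrects a typo in \eqref{mkz}, where $(x-x_1)^k$ should be the normalized ratio appearing in \eqref{1eq15}). As for what each route buys: the paper's reliance on \cite{Trif2} keeps the exposition short and the toolkit elementary; your coupling argument is self-contained, isolates exactly the two structural properties that drive the result (the operator fixes $e_1$, and its kernel is stochastically monotone in $x$), and therefore transfers verbatim to other positive linear operators enjoying these properties (Bernstein, Sz\'asz--Mirakjan, Baskakov), at the price of importing probabilistic machinery.
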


 \begin{theorem}
Let $f\in C(I)$  be uniformly h\"olderian with exponent $\beta \in (0,1]$,  satisfying $M_nf(x_0)=f(x_0)$ and $M_nf(x_N)=f(x_N)$. Suppose $\Delta:=\{x_0,\dotsc,x_N\}$ is a uniform  partition of $I$ satisfying $x_i-x_{i-1}=\lambda < 1$, for $i\in \mathbb{N}_N$, and $\alpha=(\alpha_1,\alpha_2,\dotsc,\alpha_N)\in {(-1,1)}^{N-1}$. 

Consider the IFS $\mathcal{I}_n:=\{I;(u_i(x),v_i(x,y)): i\in \mathbb{N}_N \}$, where $u_i:I \to I_i $ is an affine map with 
\[
u_i(x):=\lambda(x+{i-1}), 
\]
and 
\[
v_i(x,y):=\alpha_iy+f(u_i(x))-\alpha_i M_nf(x).
\]
 Further, assume that the set of interpolation points $\{(x_j, f(x_j)) : j\in \N_N\cup\{0\}\}$ is not collinear. Let 
\[
\gamma:=\sum_{i=1}^{N}|\alpha_i|,\quad\text{where $\alpha_i\ne 0$, for all $i\in \N_N$}.
\] 
Then, the box dimension of the graph $G=\{(x,f^{\alpha}(x)):x\in I\}$ of $f^{\alpha}$ has the following bounds:
\begin{align}
    \begin{cases}
      1\leq \dim_B G\leq 2-\beta,& \text{for}\;\; \gamma\leq 1;\\
      1\leq \dim_B G\leq 2-\beta+\log_N\gamma,& \text{for $\gamma > 1$ with $\gamma N^{\beta-1}\leq 1$};\\
      1\leq \dim_B G\leq 1+\log_N\gamma,& \text{for $\gamma > 1$ with $\gamma N^{\beta-1}> 1$};\\
       \dim_B G\geq 1+\log_N\gamma,& \text{for $\gamma > 1$ with $\beta=1$}.
    \end{cases}
\end{align}
\end{theorem}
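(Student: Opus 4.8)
The plan is to control the box dimension through the oscillation of $f^\alpha$ over the hierarchy of intervals generated by the $u_i$, using the self-referential equation \eqref{eq003} as the inductive engine. For a bounded $g$ on an interval $J$ write $\operatorname{osc}(g,J):=\sup_{s,t\in J}|g(s)-g(t)|$. Restricting \eqref{eq003} to $u_i(J)$ for any interval $J$ gives the pointwise identity $f^\alpha\circ u_i = f\circ u_i + \alpha_i\,(f^\alpha - M_n f)$ on $J$. The first input I would record is regularity of germ and base: since $f\in\Lip_A\beta$, Proposition \ref{prop1} gives $M_n f\in\Lip_A\beta$ as well, so both have $\operatorname{osc}$ at most $A\,\ell^{\beta}$ on any interval of length $\ell$. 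Because the partition is uniform with $N$ maps $u_i$, $i\in\mathbb{N}_N$, at level $m$ there are $N^m$ intervals, each of length $\asymp N^{-m}$.

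First I would derive a recursion for the total oscillation $V_m := \sum_{J}\operatorname{osc}(f^\alpha,J)$, the sum running over the $N^m$ level-$m$ intervals. Applying subadditivity of $\operatorname{osc}$ to the displayed identity, summing over $i\in\mathbb{N}_N$ and over all level-$m$ intervals $J$, and noting that the images $u_i(J)$ are exactly the level-$(m+1)$ intervals, yields
\begin{equation*}
V_{m+1} \le \gamma\,V_m + C\,N^{m(1-\beta)},\qquad \gamma=\sum_{i=1}^N|\alpha_i|,
\end{equation*}
where $C\,N^{m(1-\beta)}$ collects the $\operatorname{osc}$-contributions of $f$ and $M_n f$ through the $\Lip_A\beta$ bound (each of the $\asymp N^m$ intervals contributes $\lesssim N^{-m\beta}$). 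Solving this linear recursion and comparing the geometric ratio $\gamma$ with $N^{1-\beta}$ separates the regimes exactly as in the statement, since $\gamma N^{\beta-1}\le 1$ is precisely the comparison $\gamma\le N^{1-\beta}$: for $\gamma\le N^{1-\beta}$ one obtains $V_m=O(N^{m(1-\beta)})$ up to polynomial factors, while for $\gamma> N^{1-\beta}$ the homogeneous part dominates and $V_m=O(\gamma^m)$. Covering the graph by $\asymp N^m(1+V_m)$ boxes of side $\delta_m\asymp N^{-m}$ and passing to $\limsup_{m}\log N_{\delta_m}/(-\log\delta_m)$ then produces the upper bound $2-\beta$ (hence a fortiori $2-\beta+\log_N\gamma$) in the first two cases and $1+\log_N\gamma$ in the third.

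The delicate part is the lower bound $\dim_B G\ge 1+\log_N\gamma$ when $\beta=1$ and $\gamma>1$, because a sum of upper oscillation estimates cannot by itself rule out cancellation; this is exactly where the hypothesis that the data $\{(x_j,f(x_j))\}$ are not collinear enters. Here I would establish the reverse inequality
\begin{equation*}
\operatorname{osc}(f^\alpha,u_i(J)) \ge |\alpha_i|\,\operatorname{osc}(f^\alpha,J) - \operatorname{osc}(f,u_i(J)) - |\alpha_i|\,\operatorname{osc}(M_n f,J),
\end{equation*}
obtained by isolating $\alpha_i f^\alpha$ in the identity above, and sum it to get $V_{m+1}\ge \gamma V_m - C' N^{m(1-\beta)}= \gamma V_m - C'$ (using $\beta=1$). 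Since $\gamma>1=N^{1-\beta}$, the geometric growth $\gamma^m$ dominates the bounded perturbation once the iteration is seeded above the fixed point $V^\ast:=C'/(\gamma-1)$, in which case $V_m-V^\ast\ge \gamma^{m-m_0}(V_{m_0}-V^\ast)$ forces $V_m\gtrsim \gamma^m$, whence $N_{\delta_m}\gtrsim(N\gamma)^m$ and $\dim_B G\ge 1+\log_N\gamma$.

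The main obstacle is precisely this seeding and the control of cancellation: I must verify that the Lipschitz error terms from $f$ and $M_n f$ (of order $N^{-m}$) are genuinely negligible against $\gamma^m$, and that the non-collinearity can be propagated so as to keep $V_{m_0}>V^\ast$ at some scale rather than being lost after finitely many iterations (a collinear data set would permit $f^\alpha$ affine, of dimension $1$). The standard remedy, which I would adapt here, is a Barnsley--Harrington-type argument exhibiting at each level two abscissae whose $f^\alpha$-values differ by at least a fixed multiple of $\gamma^m$; non-collinearity guarantees the corresponding coarse-scale difference is strictly positive, seeding the induction.
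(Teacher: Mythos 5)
Your route is genuinely different from the paper's: the paper proves this theorem in two lines, using Proposition \ref{prop1} to conclude $M_nf\in\Lip_A\beta$ from $f\in\Lip_A\beta$ and then quoting Theorem 3.1 of \cite{Akhtar_Prasad_Navascues}, which contains precisely these four bounds for $\alpha$-fractal functions with H\"older germ and H\"older base. You instead set out to reprove the content of that citation. Your upper-bound half is correct: the oscillation recursion $V_{m+1}\le \gamma V_m + CN^{m(1-\beta)}$ does follow from \eqref{eq003} together with the H\"older bounds on $f$ and on $M_nf$ (the latter exactly via Proposition \ref{prop1}), and comparing $\gamma$ with $N^{1-\beta}$, combined with the standard count $N_{\delta_m}\le \sum_J\bigl(2+\operatorname{osc}(f^{\alpha},J)/\delta_m\bigr)$, gives cases 1--3 as you describe.

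The genuine gap is case 4, and it is not a removable technicality. Your plan is: prove $V_{m+1}\ge\gamma V_m-C'$, find $m_0$ with $V_{m_0}>V^{\ast}=C'/(\gamma-1)$, and let geometric growth take over, the seed coming from non-collinearity via a ``Barnsley--Harrington-type argument.'' That argument is intrinsically affine: it needs the maps $w_i(x,y)=(u_i(x),\alpha_iy+q_i(x))$ to carry line segments to line segments, so that the height $h>0$ of a data point above the chord propagates \emph{exactly} as $|\alpha_{i_1}\cdots\alpha_{i_m}|\,h$ under composition. Here $q_i=f\circ u_i-\alpha_iM_nf$ is only Lipschitz, not affine; iterating \eqref{eq003} gives $f^{\alpha}\circ u_{i_1}\circ\cdots\circ u_{i_m}=\alpha_{i_1}\cdots\alpha_{i_m}f^{\alpha}+Q_{\sigma}$ where each level contributes a Lipschitz error to $Q_{\sigma}$, and summing these over all $N^m$ words costs a term of order $\gamma^m LN/(\gamma-1)$ --- the same order as the intended seed $h\gamma^m$, with a constant built from Lipschitz constants that non-collinearity cannot control. (Also, your claim of ``two abscissae whose $f^{\alpha}$-values differ by at least a fixed multiple of $\gamma^m$'' cannot be literally meant, since $f^{\alpha}$ is bounded and $\gamma^m\to\infty$; what is needed is $V_m\gtrsim\gamma^m$.)

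Worse, no argument using only the ingredients you invoke --- germ and base both Lipschitz, endpoint interpolation, non-collinear data, $\gamma>1$ --- can prove case 4: if the base function were $f$ itself (Lipschitz, satisfying the endpoint conditions), the RB operator would have fixed point $f^{\alpha}=f$, whose graph has box dimension $1$ irrespective of $\gamma$ and collinearity. Your sketch never uses any property of $M_nf$ beyond H\"older regularity and endpoint interpolation, so it applies verbatim to that degenerate configuration, where the conclusion fails. Any complete proof of the lower bound must therefore exploit finer, non-degeneracy information distinguishing $M_nf$ from $f$ (or simply delegate to the hypotheses and proof of the cited theorem, as the paper does). In short: your upper bounds stand; the lower-bound half has a gap that your proposed remedy cannot close.
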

 \begin{proof}
By Proposition \ref{prop1}, $ f\in \Lip_A\beta$ implies $M_nf \in \Lip_A\beta$. Therefore, $M_nf$ is also h\"olderian with exponent $\beta$. The statements now follow from \cite[Theorem 3.1]{Akhtar_Prasad_Navascues} for H\"older functions.
 \end{proof}
 \subsection{{MKZ $\alpha$-fractal functions in $L^p$-spaces}}
 %

Given $f\in L^p(I)$, $p \geq 1$, where $I:=[0,1]$,  the integral MKZ operator  $\wh{M}_n$\cite{Muller} is defined as
 \begin{align}\label{eqlp00}
  \wh{M}_nf(x) := \int_0^1 H_n(x,s)f(s)ds,
 \end{align}
 where 
\[
H_n(x,s):=\sum_{k=0}^{\infty} \wh{m}_{nk}(x)\chi_{I_k}(t),
\]
\be\label{6.9}
\wh{m}_{nk}(x):=(n+1)\binom{k+n+1}{k}x^k(1-x)^n, 
\ee
and
\be\label{6.10}
I_k:=\left[\frac{k}{k+n},\frac{k+1}{k+n+1}\right],
\ee
with $\chi_{I_k}$ denoting the characteristic function on $I_k$.
 
In the following theorem, we define MKZ $\alpha$-fractal functions in $L^p$-spaces using as a base function $\wh{M}_n$. 
 \begin{theorem}\label{thlp}
Let $f\in L^p(I)$ with $p \geq 1$ and $I:=[0,1]$. Suppose that $\Delta:=\{0=x_0< x_2<\dotsc < x_N=1\}$ is a partition of $I$. Denote by $I_i:=[x_i,x_{i+1})$, $i\in \mathbb{N}_{N-1}$, and $I_N:=[x_{N-1},x_{N}]$ the subintervals induced by $\Delta$. 

Define affine maps $u_i:[0,1]\to I_i$ on $I$ by $u_i(x) :=a_ix+b_i$. Assume that \begin{gather*}
u_i(x_0)=x_{i-1},  \quad u_i(x_N-)=x_{i}, \quad i \in \mathbb{N}_{N-1},\\
u_N(x_0)=x_{N-1}, \quad u_N(x_N)=x_N. 
\end{gather*}
Suppose  $\alpha_i \in {L}^{\infty}(I)$ for $i\in \mathbb{N}_{N}$. Define an RB-operator $T:L^p(I)\to L^p(I)$ by
\begin{align}
Tg :=f + \sum_{i=1}^N (\alpha_i \circ u_i^{-1})(g-\wh{M}_nf)\circ u_i^{-1} \, \chi_{u_i(I)}.
\end{align}
Assume that the vector of scaling functions $\alpha:= (\alpha_1, \ldots, \alpha_N)\in (L^\infty(I))^N$ satisfies
  \begin{align}
\Lambda:=\left(\sum_{i\in \mathbb{N}_{N}}a_i\|\alpha_i\|_{\infty}^p\right)^{\frac{1}{p}}<1,   \quad\text{for $1\leq p <\infty$}.
  \end{align}
Then $T$ is a contraction map on $L^p(I)$ with Lipschitz constant $\Lambda$ and its unique fixed point $f_n^{\alpha}\in L^p(I)$ satisfies
\begin{align}\label{eq001}
  f_n^{\alpha} = f + \sum_{i=1}^N (\alpha_i\circ u_i^{-1})(f_n^{\alpha}-\wh{M}_nf)\circ u_i^{-1}\, \chi_{u_i(I)}.
\end{align}
Furthermore, the mapping $\mathcal{F}_{\Delta,\hat{M}_n}^{\alpha}: L^p(I) \to  L^p(I)$ with 
\[
\mathcal{F}_{\Delta,\wh{M}_n}^{\alpha}(f)=f_{n}^{\alpha}
\]
is a bounded linear operator.
 \end{theorem}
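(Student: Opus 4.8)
The plan is to verify the hypotheses of the Banach fixed point theorem on the complete normed space $(L^p(I),\n{\cdot}_p)$ and then extract linearity and boundedness of $\mathcal{F}_{\Delta,\wh{M}_n}^\alpha$ from the resulting fixed-point correspondence. First I would confirm that $T$ maps $L^p(I)$ into itself: since $\wh{M}_nf\in L^p(I)$ (a mapping property of the integral MKZ operator \eqref{eqlp00}, imported from \cite{Muller}) and $f\in L^p(I)$, the change-of-variables estimate below shows that each summand of $T$ lies in $L^p(I)$, so $Tg\in L^p(I)$ whenever $g\in L^p(I)$.

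The central step is the contraction estimate. For $g,h\in L^p(I)$ one has
\[
Tg-Th=\sum_{i=1}^N(\alpha_i\circ u_i^{-1})\,\bigl((g-h)\circ u_i^{-1}\bigr)\,\chi_{u_i(I)}.
\]
The crucial structural fact is that the images $u_i(I)$ form an essentially disjoint partition of $I$, so the supports of the $\chi_{u_i(I)}$ overlap only on a null set and the $p$-th power of the norm splits additively over $i$. On each piece I would apply the affine substitution $x=u_i(t)$, whose Jacobian is the slope $a_i$ of $u_i$, turning $\int_{u_i(I)}$ into $a_i\int_0^1$ and giving
\[
\n{Tg-Th}_p^p=\sum_{i=1}^N a_i\int_0^1|\alpha_i(t)|^p\,|(g-h)(t)|^p\,dt\le\Bigl(\sum_{i=1}^N a_i\n{\alpha_i}_\infty^p\Bigr)\n{g-h}_p^p=\Lambda^p\,\n{g-h}_p^p.
\]
Taking $p$-th roots yields $\n{Tg-Th}_p\le\Lambda\n{g-h}_p$, and since $\Lambda<1$ the operator $T$ is a contraction with Lipschitz constant $\Lambda$. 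The Banach fixed point theorem then produces a unique $f_n^\alpha\in L^p(I)$ with $Tf_n^\alpha=f_n^\alpha$, which is precisely \eqref{eq001}.

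For the final assertion I would treat linearity and boundedness separately. Linearity of $\mathcal{F}_{\Delta,\wh{M}_n}^\alpha$ follows from uniqueness of the fixed point together with the linearity of $\wh{M}_n$ (immediate from the integral definition \eqref{eqlp00}): given $f,g$ with fixed points $f_n^\alpha,g_n^\alpha$ and a scalar $\lambda$, one checks that $f_n^\alpha+\lambda g_n^\alpha$ solves the fixed-point equation associated with $f+\lambda g$, whence $(f+\lambda g)_n^\alpha=f_n^\alpha+\lambda g_n^\alpha$. For boundedness, subtracting $f$ from \eqref{eq001} and repeating the same change-of-variables computation gives the a priori bound
\[
\n{f_n^\alpha-f}_p\le\frac{\Lambda}{1-\Lambda}\,\n{f-\wh{M}_nf}_p,
\]
the $L^p$-analogue of \eqref{eq22}. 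Combining this with the triangle inequality and the $L^p$-boundedness $\n{\wh{M}_nf}_p\le C_n\n{f}_p$ of the integral MKZ operator yields $\n{f_n^\alpha}_p\le C\,\n{f}_p$ for a constant $C=C(\Lambda,C_n)$, so $\mathcal{F}_{\Delta,\wh{M}_n}^\alpha$ is bounded.

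I expect the main obstacle to be the contraction estimate itself, specifically making rigorous that the essentially disjoint support structure of the $u_i(I)$ permits the additive splitting of the $L^p$ norm and that the affine Jacobians $a_i$ are exactly the weights assembling into $\Lambda$. A secondary point demanding care is the well-definedness of $T$ on $L^p(I)$, i.e.\ that $\wh{M}_nf\in L^p(I)$ and $\n{\wh{M}_nf}_p\le C_n\n{f}_p$; these must be invoked as mapping properties of the integral MKZ operator rather than derived here.
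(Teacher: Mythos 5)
Your proposal is correct and takes essentially the same route as the paper: the paper's proof consists of citing \cite[Theorem 2.1]{Vishwanathan_Navascues_Chand} for the contraction property and \cite[Theorem 3.4]{Chand_Jha_Navascues} for the bounded linearity of $\mathcal{F}_{\Delta,\wh{M}_n}^{\alpha}$, and your change-of-variables estimate over the essentially disjoint pieces $u_i(I)$ (yielding the weights $a_i\|\alpha_i\|_{\infty}^p$ and hence $\Lambda$), together with the fixed-point uniqueness argument for linearity and the a priori bound for boundedness, is precisely the content of those cited results. In effect you have written out, self-contained, what the paper outsources to references, including the needed import of M\"uller's result that $\wh{M}_n$ is well defined and bounded on $L^p(I)$ for $p\geq 1$.
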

 \begin{proof}
Using the same arguments as in the proof of \cite[Theorem 2.1]{Vishwanathan_Navascues_Chand}, we can easily verify that $T$ is contractive. The remaining part follows using Theorem 3.4 in \cite{Chand_Jha_Navascues}.
 \end{proof}
 \begin{theorem}
 Let $f\in L^p(I)$, $p\in [1,\infty)$. The MKZ fractal functions $f_n^{\alpha}$, $ n\in \mathbb{N}$, defined in Theorem \ref{thlp} converge in $L^p$-norm to $f$ as $n\to \infty$ and satisfies the estimate
 \begin{align}\label{eq002}
   \|f_n^{\alpha}-f\|_{p} \leq C\frac{\Lambda}{1-\Lambda} \omega_{1,p}\left(f,\frac{1}{\sqrt{n}}\right),
 \end{align}
 where $C > 0$ is a constant independent of $f$ and $p$, 
 \[
 \omega_{1,p}(f,t):=\sup_{0<h\leq t}\|f(.+h)-f(.)\|_{p,I_h}, 
 \]
 where $\| .\|_{p,I_h}$ is the $L^p$ norm taken over the interval $I_h=[0,1-h]$.
 \end{theorem}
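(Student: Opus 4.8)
The plan is to reduce the $L^p$ convergence of the fractal functions to the $L^p$ convergence of the integral MKZ operator $\wh{M}_n$, mirroring exactly the sup-norm argument used in Theorem \ref{1th3.1}. First I would extract an $L^p$ analogue of the basic perturbation estimate \eqref{eq22}. Starting from the fixed-point equation \eqref{eq001}, subtracting $f$ from both sides gives
\[
f_n^\alpha - f = \sum_{i=1}^N (\alpha_i\circ u_i^{-1})(f_n^\alpha - \wh{M}_nf)\circ u_i^{-1}\,\chi_{u_i(I)}.
\]
Since the sets $u_i(I)$ tile $I$ up to a null set (this is guaranteed by the endpoint conditions imposed on the $u_i$ in Theorem \ref{thlp}), the $p$-th power of the norm splits as a sum of integrals over the $u_i(I)$. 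On each piece I would perform the affine change of variables $x=u_i(t)$, $dx=a_i\,dt$, and bound $|\alpha_i|$ by $\|\alpha_i\|_\infty$, obtaining
\[
\|f_n^\alpha - f\|_p^p \leq \sum_{i=1}^N a_i\|\alpha_i\|_\infty^p\,\|f_n^\alpha - \wh{M}_nf\|_p^p = \Lambda^p\,\|f_n^\alpha - \wh{M}_nf\|_p^p,
\]
so that $\|f_n^\alpha - f\|_p \leq \Lambda\,\|f_n^\alpha - \wh{M}_nf\|_p$. Inserting the triangle inequality $\|f_n^\alpha - \wh{M}_nf\|_p \leq \|f_n^\alpha - f\|_p + \|f-\wh{M}_nf\|_p$ and solving for $\|f_n^\alpha - f\|_p$ (where $\Lambda<1$ is used) yields
\[
\|f_n^\alpha - f\|_p \leq \frac{\Lambda}{1-\Lambda}\,\|f-\wh{M}_nf\|_p.
\]

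The second ingredient is the quantitative $L^p$ approximation property of the integral (Kantorovich-type) MKZ operator, namely $\|f - \wh{M}_nf\|_p \leq C\,\omega_{1,p}(f,\tfrac{1}{\sqrt n})$ with $C$ independent of $f$ and $p$. I would invoke this directly from the theory of the integral MKZ operator in \cite{Muller}, which is the $L^p$ counterpart of the uniform estimates from \cite{Trif,Lupas_Muller} used earlier in the paper. Combining the two displayed inequalities yields the asserted estimate \eqref{eq002}. Convergence in $L^p$ then follows because $\omega_{1,p}(f,t)\to 0$ as $t\to 0^+$ for every $f\in L^p(I)$ with $p\in[1,\infty)$ (the $L^p$-continuity of translation), and $\tfrac{1}{\sqrt n}\to 0$.

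The main obstacle is the second ingredient: the first step is a routine change-of-variables computation controlled entirely by the contractivity constant $\Lambda$, but the sharp $L^p$ estimate for $\wh{M}_n$ in terms of $\omega_{1,p}$ is the substantive analytic input. If it is not available verbatim in \cite{Muller}, one would reprove it by splitting $\wh{M}_nf - f$ over the intervals $I_k$ from \eqref{6.10}, using $\int_0^1 H_n(x,s)\,ds = 1$ to write the difference as an average of translates $f(\cdot+h)-f(\cdot)$, and then controlling the moment sums $\sum_k \wh{m}_{nk}(x)\,\lvert\tfrac{k}{k+n}-x\rvert$ of order $1/\sqrt n$ by the Cauchy--Schwarz (H\"older) inequality — the same mechanism that produces the $1/\sqrt n$ rate in the classical MKZ estimates. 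A minor technical point worth verifying is that the constant $C$ can indeed be taken independent of $p$, which is consistent with the fact that the kernel $H_n$ is a probability kernel for every $x$.
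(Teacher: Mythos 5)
Your proposal is correct and follows essentially the same route as the paper: derive $\|f_n^{\alpha}-f\|_{p}\leq \Lambda\|f_n^{\alpha}-\wh{M}_nf\|_{p}$ from the fixed-point equation \eqref{eq001}, apply the triangle inequality and solve to get the factor $\frac{\Lambda}{1-\Lambda}\|f-\wh{M}_nf\|_{p}$, then invoke M\"uller's Theorem~3 in \cite{Muller} for the bound $\|f-\wh{M}_nf\|_{p}\leq C\,\omega_{1,p}\left(f,\frac{1}{\sqrt{n}}\right)$. Your change-of-variables computation makes explicit the step the paper dismisses as ``easy to compute,'' but the decomposition, the key lemma, and the conclusion are identical.
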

 \begin{proof}
 From \eqref{eq001}, it is easy to compute that
 \begin{align*}
    \begin{split}
       \|f_n^{\alpha}-f\|_{p}^p \leq&  \Lambda^p  \|f_n^{\alpha}-\hat{M}_nf\|_{p}^p.
       \end{split} 
 \end{align*}
     Equivalently,
        \begin{align*}
    \begin{split}
      \|f_n^{\alpha}-f\|_{p} \leq &\Lambda  \|f_n^{\alpha}-\hat{M}_nf\|_{p}\\
        \leq&  \Lambda \left\{ \|f_n^{\alpha}-f\|_{p} + \|f-\wh{M}_nf\|_{p} \right\},
    \end{split} 
 \end{align*}
 which gives
  \begin{align}\label{eq0012}
      \|f_n^{\alpha}-f\|_{p} \leq \frac{\Lambda}{1-\Lambda} \|f-\wh{M}_nf\|_{p}. 
  \end{align}
   This  ensures $\|f_n^{\alpha}-f\|_{p} \to 0$ as $n \to \infty$.
  
It is known from Theorem 3 in \cite{Muller} that $\|f-\wh{M}_nf\|_{p} \leq C \omega_{1,p}\left(f,\frac{1}{\sqrt{n}}\right)$, for some $C>0$. Using this result in \eqref{eq0012}, we obtain the required estimate.
 \end{proof}
 Now we give the MKZ-fractal version of the Full M\"untz Theorem for $L^p[0,1]$.
\begin{theorem}
Let $\{\lambda_i\}_{i=0}^{\infty}$ be the sequence of district real numbers such that $\lambda_i\geq -\frac1p,$ for all $i\in \mathbb{N}$, where $p\in [1,\infty)$ and $(x^{\lambda_i})_n^{\alpha}$ is defined as in \eqref{eq001}. Then the  MKZ--M\"untz space, 
\[
S:=\bigcup_{n=1}^{\infty} \bigcup_{m=1}^{\infty} \Span\{(x^{\lambda_0})_n^{\alpha},(x^{\lambda_1})_n^{\alpha},\dotsc,(x^{\lambda_m})_n^{\alpha}\},
\]
is dense in $L^p(I)$ provided that 
\begin{align*}
    \sum_{i=0}^{\infty} \frac{\lambda_i+\frac{1}{p}}{(\lambda_i+\frac{1}{p})^2+1}=\infty.
\end{align*}
\end{theorem}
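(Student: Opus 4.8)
The plan is to mirror the proof of Theorem \ref{1th5.1}, replacing the sup-norm by the $L^p$-norm and the uniform convergence of the fractalization operator by the $L^p$-convergence recorded in the preceding theorem (estimates \eqref{eq002} and \eqref{eq0012}). Fix $f\in L^p(I)$ and $\epsilon>0$. First I would invoke the classical full M\"untz theorem in $L^p[0,1]$ from \cite{Borwein_Erdelyi}: under the hypothesis $\lambda_i\geq -\tfrac1p$ each monomial $x^{\lambda_i}$ lies in $L^p(I)$, so the M\"untz system is a genuine subset of $L^p(I)$, and the divergence condition $\sum_{i=0}^{\infty}\frac{\lambda_i+1/p}{(\lambda_i+1/p)^2+1}=\infty$ is precisely the criterion that renders $\Span\{x^{\lambda_0},x^{\lambda_1},\dotsc\}$ dense in $L^p(I)$. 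Consequently there is a M\"untz polynomial $p(x)=\sum_{s=0}^{l}a_s x^{\lambda_s}$ with $\|f-p\|_p<\tfrac{\epsilon}{2}$.

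Next I would fractalize $p$. Since $p\in L^p(I)$, Theorem \ref{thlp} provides a well-defined fractal function $p_n^{\alpha}=\mathcal{F}_{\Delta,\wh{M}_n}^{\alpha}(p)$, and because that operator is linear we may write $p_n^{\alpha}=\sum_{s=0}^{l}a_s\,(x^{\lambda_s})_n^{\alpha}\in S$. Applying the $L^p$-convergence estimate \eqref{eq0012} (equivalently \eqref{eq002}) to the germ $p$ yields $\|p_n^{\alpha}-p\|_p\leq \tfrac{\Lambda}{1-\Lambda}\|p-\wh{M}_n p\|_p\to 0$ as $n\to\infty$, so there exists $n_1\in\N$ with $\|p_n^{\alpha}-p\|_p<\tfrac{\epsilon}{2}$ for all $n\geq n_1$. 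Combining the two bounds through the triangle inequality gives, for every $n\geq n_1$, the estimate $\|p_n^{\alpha}-f\|_p\leq \|p_n^{\alpha}-p\|_p+\|p-f\|_p<\epsilon$. As $p_n^{\alpha}\in S$ and $\epsilon>0$ is arbitrary, this shows that $S$ is dense in $L^p(I)$.

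The only step demanding genuine care is the invocation of the classical $L^p$ M\"untz theorem with exactly the stated exponent condition: one must verify the membership $x^{\lambda_i}\in L^p(I)$, which is what forces the constraint $\lambda_i\geq -\tfrac1p$ and explains why the shifted exponents $\lambda_i+\tfrac1p$ appear in the divergence series. Everything downstream of that point is purely formal: the linearity of $\mathcal{F}_{\Delta,\wh{M}_n}^{\alpha}$ established in Theorem \ref{thlp} and the two-$\tfrac{\epsilon}{2}$ argument already carried out for Theorem \ref{1th5.1} in the $C[0,1]$ setting transfer verbatim to the $L^p$ framework.
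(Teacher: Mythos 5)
Your proof is correct and follows essentially the same route as the paper's: approximate $f$ by a classical M\"untz polynomial using the full $L^p$ M\"untz theorem of \cite{Borwein_Erdelyi}, fractalize that polynomial via the operator of Theorem \ref{thlp}, use its linearity to see the fractalization lies in $S$, and finish with the triangle inequality. If anything, your write-up is cleaner: the paper's intermediate inequality \eqref{eqlp3} misstates the bound as $\|f-r_n^{\alpha}\|_p<\frac{\epsilon}{2}$ where it should read $\|r-r_n^{\alpha}\|_p<\frac{\epsilon}{2}$, which is exactly the estimate you correctly derive from \eqref{eq0012}.
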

\begin{proof}
Let $f\in L^p(I)$ and $\epsilon>0$ be given. By \cite{Borwein_Erdelyi},   it is known that the space $\bigcup\limits_{m=1}^{\infty} \Span\{x^{\lambda_0},x^{\lambda_1},\dotsc,x^{\lambda_m})\}$ is dense in $ L^p(I)$. Therefore, there exists a M\"untz polynomial $r(x):=\sum\limits_{i=1}^l d_i\, x^{\lambda_i} \in L^p(I)$, $d_i\in\R$, such that 
\begin{align*}
  \|f-r\|_p<\frac{\epsilon}{2}.  
\end{align*}
As  $r\in L^p(I)$, there exists a natural number $n_0$ such that
\begin{align}\label{eqlp3}
\begin{split}
 \|f-r_n^{\alpha}\|_p<&\frac{\epsilon}{2},\quad \text{for all}\; n\geq n_0.
     \end{split}
 \end{align}
 Using the linearity of  $\mathcal{F}_{\Delta,\hat{M}_n}^{\alpha}$, we can write  $r_n^{\alpha}(x)=\mathcal{F}_{\Delta,\hat{M}_n}^{\alpha}(r)(x)=\sum\limits_{i=0}^{l}d_i(x^{\lambda_i})_n^{\alpha}$ and, thus, the above inequality becomes
 \begin{align}\label{eqlp4}
 \left\|f-\sum_{i=0}^{l}d_i(x^{\lambda_i})_n^{\alpha}\right\|_p< \frac{\epsilon}{2},\quad\text{for all}\; n\geq n_0.
\end{align}
From \eqref{eqlp3} and \eqref{eqlp4}, we obtain  
\[
\left\|f-\sum_{i=0}^{l}d_i(x^{\lambda_i})_n^{\alpha}\right\|_p<\epsilon,\quad\text{for all $n\geq n_0$}.
\] 
Hence $S$ is dense in $L^p(I)$.
\end{proof}
\subsection{A remark about the case $0<p<1$}
Recall that for $0 < p < 1$, the Lebesgue spaces ${L}^p (I)$, $I\subset \R$ a compact interval, are $F$-spaces whose topology is induced by the complete translation invariant metric 
\[ 
d_p (g,h) := \n{g-h}_{p}^p = \int_I |g(x) - h(x)|^p dx.
\] 
(See, \cite[1.47]{Ru}.)

We will show by a counterexample that the above results do not extend to the case $0<p<1$. For this purpose, we need to quote a theorem of Orlicz's. 
\begin{theorem}[Orlicz's Theorem \cite{Lorentz}]\label{Orlicz}
Suppose that the kernel $K_n(x,t)$ is measurable in the square $\{(x,t)\in \R\times\R : a\leq x\leq b$, $a\leq t\leq b\}$ and that 
\begin{equation}
    \int_a^b|K_n(x,t)|dt\leq M,\quad\text{a.e. $x\in [a,b]$},
    \end{equation}
\begin{equation}
    \int_a^b|K_n(x,t)|dx\leq M,\quad\text{a.e. $t\in [a,b]$},
\end{equation}
with a constant $M$ and for all $n\in \N$. Then, for $f\in L^p[a,b]$, the singular integral 
\begin{equation}
    F_n(x)=\int_a^b K_n(x,t)f(t)dt
\end{equation}
exists for a.e. $x$ and is a function of the class $L^p[a,b]$. If in addition $F_n \to f$ strongly for all elements $f\in H$ of a set $H \subset L^p[a,b]$ which is everywhere dense in $L^p[a,b]$, then this is also true for any $f\in L^p[a,b]$:
\begin{equation}
    \|f-F_n\|=\left [ \int_a ^b|f(x)-F_n(x)|^p dx \right ]^{\frac{1}{p}} \to 0.
\end{equation}
\end{theorem}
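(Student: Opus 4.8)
The plan is to establish the two assertions separately: first that $F_n$ is a well-defined linear operator on $L^p[a,b]$ obeying a bound $\|F_n f\|\le M\|f\|$ with $M$ \emph{independent of $n$}, and then that this uniform boundedness, combined with convergence on the dense set $H$, forces convergence on all of $L^p[a,b]$. This is the classical Schur-test-plus-$\varepsilon/3$ scheme, and the two measurability/integrability hypotheses on $K_n$ are exactly the two Schur conditions needed for the first step.

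First I would fix $f\in L^p[a,b]$ and show $F_n f$ exists a.e.\ and lies in $L^p[a,b]$ with the advertised uniform bound. For $p\ge 1$ the device is to split the kernel as $|K_n(x,t)|=|K_n(x,t)|^{1/p'}\,|K_n(x,t)|^{1/p}$ with $1/p+1/p'=1$ and apply H\"older's inequality in the $t$-variable:
\[
|F_n(x)|\le\Bigl(\int_a^b|K_n(x,t)|\,dt\Bigr)^{1/p'}\Bigl(\int_a^b|K_n(x,t)|\,|f(t)|^p\,dt\Bigr)^{1/p}\le M^{1/p'}\Bigl(\int_a^b|K_n(x,t)|\,|f(t)|^p\,dt\Bigr)^{1/p}.
\]
Raising to the $p$-th power, integrating in $x$, interchanging the order of integration by Tonelli (the integrand is nonnegative), and invoking the second bound $\int_a^b|K_n(x,t)|\,dx\le M$ then gives $\|F_n f\|^p\le M^{p}\|f\|^p$. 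In particular the inner integral is finite for a.e.\ $x$, so $F_n f$ is defined a.e.\ and belongs to $L^p[a,b]$; linearity of $F_n$ is immediate from linearity of the integral.

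With the uniform bound $\|F_n f\|\le M\|f\|$ in hand, the convergence statement is a standard approximation argument. Given $f\in L^p[a,b]$ and $\varepsilon>0$, choose $h\in H$ with $\|f-h\|<\varepsilon$; then, using linearity ($F_n(h-f)=F_nh-F_nf$), the uniform bound, and the triangle inequality,
\[
\|f-F_n f\|\le \|f-h\|+\|h-F_n h\|+\|F_n(h-f)\|\le (1+M)\|f-h\|+\|h-F_n h\|.
\]
Since $h\in H$, the hypothesis gives $\|h-F_n h\|\to 0$, whence $\limsup_n\|f-F_n f\|\le(1+M)\varepsilon$; as $\varepsilon$ is arbitrary, $F_n f\to f$ in $L^p[a,b]$, which is the final display of the theorem.

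I expect the uniform boundedness step to be the main obstacle, and it is also the step most sensitive to the range of $p$. For $p\ge 1$ the H\"older splitting above is clean (the case $p=1$ being the direct estimate $\int|F_n|\le M\|f\|_1$). For $0<p<1$ the triangle inequality is no longer available, and one must instead work with the translation-invariant metric $d_p(g,h)=\|g-h\|_p^p$ from the preceding remark, whose $p$-subadditivity replaces the triangle inequality in the $\varepsilon/3$ step. The delicate point is then to secure a bound of the form $\|F_n f\|_p^p\le C\|f\|_p^p$ uniformly in $n$: the H\"older argument breaks down because the conjugate exponent is no longer positive, so one needs a more careful estimate of $\int\bigl(\int|K_n(x,t)|\,|f(t)|\,dt\bigr)^p dx$ directly from the two Schur conditions. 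Once that bound is in place, the remainder of the argument carries over verbatim with $\|\cdot\|$ replaced by $d_p$.
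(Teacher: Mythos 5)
You should know at the outset that the paper contains no proof of this statement to compare against: Orlicz's theorem is quoted, with attribution, from Lorentz's monograph \cite{Lorentz} as a known tool, and is used in the paper only to frame the discussion of the integral operators $\widehat{M}_n$. Your argument is nevertheless a correct, self-contained proof for $p\ge 1$, the range in which the theorem is stated (and the only range in which it is true, as the paper stresses immediately afterwards). The Schur-test step is sound: the splitting $|K_n|=|K_n|^{1/p'}|K_n|^{1/p}$, H\"older in $t$, Tonelli, and the second kernel condition give $\|F_nf\|_p^p\le M^{p/p'}\cdot M\,\|f\|_p^p=M^p\|f\|_p^p$, and the a.e.\ finiteness of $\int_a^b|K_n(x,t)||f(t)|^p\,dt$ yields the a.e.\ existence claim, with $p=1$ handled by the direct Tonelli estimate. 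The $\varepsilon/3$ step using linearity, the uniform bound $\|F_n\|\le M$, and density of $H$ is the classical completion, so your route is exactly the standard one for this theorem. One caveat concerning your final paragraph: the bound $\|F_nf\|_p^p\le C\|f\|_p^p$ that you hope to extract for $0<p<1$ cannot be secured from the two Schur conditions, and in fact $F_nf$ need not even exist in that range; the paper's own counterexample placed right after the theorem (constant kernel $K_n\equiv c\ne 0$ and $f(t)=t^{-1}$, which lies in $L^p$ for $0<p<1$) makes the integral diverge everywhere. So the theorem genuinely terminates at $p=1$, which is precisely the point the paper invokes it to make.
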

Orlicz theorem is only true for $p\geq 1$. The following example shows that the above theorem does not hold for $0<p<1$:

\begin{Example}
Let $[a,b]:= I = [0,1]$, $K_n(x,t) :=c$, $c\ne 0$, and $f(t) :={t^{-1}}$. Then, $f\in L^p(I)$ for $0<p<1$ and $K_n(x,t)$ satisfy all required conditions of Orlicz's theorem, but
\begin{equation*}
 F_n(x) = \int_0^1 K_n(x,t)f(t)dt =\int_0^1 c \frac{1}{t}dt=\begin{cases} 
 +\infty, & c>0; \\
 -\infty, & c<0.
 \end{cases}
\end{equation*}
\end{Example}
For $p\geq 1$, M\"uller\cite{Muller} proved that the operator $\widehat{M}_n$ is well-defined and bounded using Orlicz's theorem \ref{Orlicz}. The above choices for $K_n$ and $f$ imply that Orlicz's theorem fails to prove the well-definiteness of the operator $\widehat{M}$ in $L^p$ for $0<p<1$. More precisely, take again
%
$f(t):={t}^{-1}$. As seen above, $f\in L^p$ for $0<p<1$.  By the positivity of $\widehat{m}_{nk}(x)$ (i.e., the series given below is a series of positive terms), we have
\begin{align*}
    \widehat{M}_nf(x) &=\int_0^1 H_n(x,t)f(t) dt= \sum_{k=0}^{\infty} \widehat{m}_{nk}(x)\int_{I_k}f(t) dt \geq  \widehat{m}_{n0}(x)\int_{I_0}f(t) dt\\
    &= (n+1)(1-x)^n \lim_{\varepsilon\to 0+} \int_\varepsilon^1 \frac{dt}{t} =\infty,
\end{align*}
where $\widehat{m}_{nk}$ and $I_k$ are given by \eqref{6.9} and \eqref{6.10}, respectively.
Hence, the integral MKZ operators  $\widehat{M}_n$ are  not well-defined on $L^p$ for $0<p<1$.

%
\section{Conclusions}\label{1sec6} 
In this paper, a novel class of fractal functions is introduced using quantum Meyer-K\"onig-Zeller (MKZ) functions as base functions in the $\alpha$-fractal interpolation procedure. For $f\in C(I)$,  we have constructed a sequence of quantum MKZ-fractal functions  that converges uniformly to $f$ without altering the scaling functions. The convergence and  shape  of  quantum  fractal approximants depend on  the variable $q\in(0,1]$ as well as the scaling functions.  For a given positive function $f\in C(I)$, we have generated  a sequence of positive MKZ-fractal functions that converges uniformly to $f$ provided the scaling functions satisfy certain growth restraints.  We have shown the existence of one-sided MKZ-fractal approximants for a given function and proved MKZ-fractal versions of M\"untz theorems  in  $C[0,1]$. Finally, we have investigated some approximation-theoretic properties of MKZ $\alpha$-fractal functions  in $C(I)$ and $L^p$-spaces.

\end{document}